\documentclass[a4paper, 12pt]{amsart}
\usepackage{amsmath, amsthm, amscd, amssymb, amsfonts, amssymb, latexsym}
\usepackage{enumerate}
\usepackage{verbatim}
\usepackage{float}

\usepackage{lmodern}

\usepackage{tikz}

\usepackage{array}
\usepackage{hyperref}
\hypersetup{colorlinks = true,	allcolors  = blue}

\newcommand{\G}{\Gamma}
\newcommand{\Z}{\mathbb{Z}}
\newcommand{\R}{\mathbb{R}}
\newcommand{\C}{\mathbb{C}}
\newcommand{\N}{\mathbb{N}}
\newcommand{\ff}{\mathbb{F}}
\newcommand{\Tr}{\mathrm{Tr}}

\newcommand{\sk}{\smallskip}
\newcommand{\msk}{\medskip}

\newtheorem{thm}{Theorem}[section]
\newtheorem{prop}[thm]{Proposition}
\newtheorem{lem}[thm]{Lemma}
\newtheorem{coro}[thm]{Corollary}

\theoremstyle{definition}
\newtheorem{rem}[thm]{Remark}
\newtheorem{exam}[thm]{Example}
\newtheorem{defi}[thm]{Definition}

\theoremstyle{remark}
\newtheorem*{note}{Note}
\newtheorem*{notation}{Notation}

\usepackage{color}

\begin{document} \sloppy

	\numberwithin{equation}{section}
	\title[The nature of the spectrum of GP-graphs and weak Waring numbers]{The nature of the spectrum of generalized Paley graphs and weak Waring numbers over finite fields}
	\author[R.A.\@ Podest\'a, D.E.\@ Videla]{Ricardo A.\@ Podest\'a, Denis E.\@ Videla}
	\dedicatory{\today}
	\keywords{Generalized Paley graphs, spectrum, integral spectrum, periods, index of imprimitivity, weak Waring numbers}
	\thanks{2020 {\it Mathematics Subject Classification.} Primary 05C25;\, Secondary 05C50, 05C75.}
	\thanks{Partially supported by CONICET and SECyT-UNC}
	
	\address{Ricardo A.\@ Podest\'a, FaMAF -- CIEM (CONICET), Universidad Nacional de C\'ordoba, \newline
		Av.\@ Medina Allende 2144, Ciudad Universitaria, (5000) C\'ordoba, Argentina. 
		\newline {\it E-mail: podesta@famaf.unc.edu.ar}}
	\address{Denis E.\@ Videla, FaMAF -- CIEM (CONICET), Universidad Nacional de C\'ordoba, \newline
		Av.\@ Medina Allende 2144, Ciudad Universitaria,  (5000) C\'ordoba, Argentina. 
		\newline {\it E-mail: devidela@famaf.unc.edu.ar}}

	\begin{abstract}  
		We consider the family of generalized Paley graphs (GP-graphs for short) $\G(k,q)=Cay(\ff_q, (\ff_q^*)^k)$, with $q=p^m$ and $p$ prime. We characterize all GP-graphs having real spectrum; namely, $Spec(\G(k,q)) \subset \R$ if and only if $\G(k,q)$ is undirected. 
		We then study conditions for integrality in the spectrum and give a general method to produce integral GP-graphs through cyclotomic polynomials. Using this, we construct several infinite families of integral GP-graphs.
		Next, we focus on directed GP-graphs (GP-digraphs). 
		We show that GP-digraphs always have three or more eigenvalues, and then we prove that there is only one kind of GP-digraphs having three different eigenvalues: the oriented Paley graphs $\vec{\mathcal{P}}_q$ or disjoint unions of copies of them, $\vec{\mathcal{P}}_q \cup \cdots \cup \vec{\mathcal{P}}_q$. 
		Then, we show that generically the GP-digraphs have period 1 (equivalently index of imprimitivity 1) except for $\G(q-1,q)$ with $q$ odd, which is the disjoint union of oriented $p$-cycles, having period $p$. 
		Finally, as an application, we study weak Waring numbers over finite fields through GP-graphs. In particular, we reduce the computation of the weak Waring numbers over finite fields to the computation of classic Waring numbers over finite fields, a result previously obtained by Cochrane and Cipra in 2012 by other means.
	\end{abstract}

	\maketitle

	\section{Introduction} 
	In this work, we study the nature of the spectrum of the family of generalized Paley (GP) graphs, completing the characterization of those GP-graphs which have integral (already known), real, or complex spectrum. The situation for these graphs is, unlike for general families of graphs, very neat and simple: the GP-graphs with real spectrum are exactly those undirected GP-graphs. As an application, we solve the question of the weak Waring numbers over finite fields.

	From now on, $\ff_q$ will denote a finite field with $q$ elements, where $q=p^m$ with $p$ prime and $m \in \N$.
	A \textit{generalized Paley graph} is the Cayley graph
	\begin{equation} \label{eq: Gkq}
		\G(k,q) = Cay(\ff_{q},R_k) \qquad \text{with} \qquad R_k=\{ x^{k} : x \in \ff_{q}^*\} = (\ff_q^*)^k.
	\end{equation}
	Since $\G(k,q)=\G(\gcd(k,q-1),q)$, 
	one assumes that $k\mid q-1$ (and we do this from now on).
	The graph $\G(k,q)$ is $n$-regular with 
	$$n=\tfrac{q-1}{k}.$$ 
	It is well-known that $\G(k,q)$ is connected if and only if $ord_n(p) = m$, and  
	that $\G(k,q)$ is undirected if and only if $q$ is even or $q$ is odd and $k \mid \frac{q-1}2$.

	The spectrum of an arbitrary graph $\G$, denoted $Spec(\G)$, is the set of eigenvalues of its adjacency matrix $A$ counted with multiplicities.
	If $\Gamma$ has different eigenvalues $\lambda_0, \ldots, \lambda_t$ with multiplicities $m_0,\ldots,m_t$, we write 
	as usual 
	\begin{equation} \label{eq: spec}
		Spec(\Gamma) = \{[\lambda_0]^{m_0}, \ldots, [\lambda_t]^{m_t}\}.
	\end{equation}
	It is well-known that if $\G$ is an $n$-regular graph, then $n$ is the greatest eigenvalue, with multiplicity equal to the number of connected components of $\G$. 
	Thus, $\G$ is connected if and only if $n$ has multiplicity $1$.
	For $n$-regular digraphs, i.e., those directed graphs such that any vertex has the same in-degree and out-degree equal to $n$, $\G$ is strongly connected if and only if $n$ has multiplicity $1$.

	The spectrum of $\G$ is said to be \textit{real} or \textit{integral} if 
	$$ Spec(\Gamma)\subset\mathbb{R} \qquad \text{or} \qquad Spec(\Gamma)\subset\mathbb{Z},$$ 
	respectively. 
	The spectra of a few families of GP-graphs are known. The spectrum of the graphs $\G(k,q)$ with small $k$ are known; the cases $k=1,2$ are classic, and for $k=3,4$  (also $k=5$ in some cases) they were recently computed in \cite{PV3}. 
	The spectrum of semiprimitive GP-graphs is computed in \cite{PV3} (see also \cite{PV4b} for a subfamily and \cite{BWX} for its generalization) and the spectrum of Hamming GP-graphs can be found in \cite{PV4}.

	In some of our previous works (see \cite{PV4b}, \cite{PV3}, \cite{PV18}) we have studied certain structural and spectral properties of GP-graphs. 
	For instance we have studied particular families such as $\G(3,q)$ and $\G(4,q)$, or the strongly regular and semiprimitive GP-graphs and the subfamily $\G(q^\ell+1, q^m)$ with $\ell \mid m$ (see \cite{PV4b}). 
	We have computed the spectrum and energy, we gave constructions of equienergetic non-isospectral pairs and we characterized Ramanujan families. 
	Recently, in \cite{PV18}, we studied two basic structural properties: connectedness and bipartiteness. We gave the connected components of $\G(k,q)$ and we show that any $\G(k,q)$ is non-bipartite except for the graphs $\G(2^m-1,2^m)$ with $m\in \N$ which are $2^{m-1}$ copies of $K_2$.   
	Previously, in \cite{PV3}, we have characterized those GP-graphs having integral spectrum.
	In this work, we complete the study of the nature of the spectrum, characterizing those GP-graphs having real spectrum or with complex non-real spectrum. For directed (i.e.\@ oriented) GP-graphs we study the periods and the index of imprimitivity.

	Given $k\in \N$, the \textit{Waring number} $g(k,q)$ over the finite field $\ff_q$ is the minimum $s\in \mathbb{N}$ (if exists) such that for any element $a \in \mathbb{F}_q$ there exist $x_1, \ldots, x_s \in \mathbb{F}_q$ with
	$$ a= x_1^k + \cdots + x_s^k. $$ 
	We have studied these numbers before in \cite{PV6}, \cite{PV7} and \cite{PV9}.
	In a similar way, the \textit{weak Waring number} over finite fields, denoted $w(k,q)$, is the minimum $s\in \mathbb{N}$ (if exists) such that for any $a\in\mathbb{F}_{q}$ there exist 
	$x_1, \ldots, x_s \in \mathbb{F}_q$  
	such that
	$$ a= \pm x_1^k \pm \cdots \pm x_s^k,$$
	meaning that each term can have a plus or a minus sign independently. 
	Here, we will express $w(k,q)$ in terms of $g(k,q)$.

	\subsubsection*{Outline and results}
	Briefly, the paper is organized as follows. In the first two sections, apart from the Introduction, we study the nature of the spectrum of some Cayley and GP-graphs. In Section~\ref{sec: 4} we give a list of families of GP-graphs with known spectrum and check the conditions for the spectrum to be integral, real or complex. In the next section we study integral GP-graphs in more detail. In the next two sections, we study directed GP-graphs. In Section \ref{sec:3}, we characterize all GP-digraphs having exactly 3 eigenvalues, while in Section \ref{sec: GP-digraphs} we study the periods of GP-digraphs and their relation with the spectrum.
	In Section \ref{sec: Waring}, we give an interesting application to weak Waring numbers, a refinement of Waring numbers. Finally, in the last section, we exhibit some worked examples. 
	
	More precisely, in this work, we do the following. 
	In Section \ref{sec:2a}, we study the nature of the spectrum (real vs complex) for general Cayley graphs $X(G,S)$ with the assumption $S\cap S^{-1} = \varnothing$ (i.e.\@ non-mixed graphs), in terms of directedness. For a set $S$ satisfying this condition, in Theorem \ref{thm: XGSy-S} we show that 
	$$X(G,\check S) = X(G,S) \cup X(G,S^{-1}) = \overset{_{\rightarrow}}{X}(G,S) \cup \overset{_{\leftarrow}}{X}(G,S),$$ 
	where $\overset{_{\rightarrow}}{X}(G,S)$ denotes $X(G,S)$ with the given orientation and $\overset{_{\leftarrow}}{X}(G,S)$ denotes $X(G,S)$ with the reverse orientation, and that for $G$ abelian, the spectra of these graphs satisfy 
	$$Spec(X(G,\check S)) = 2 \, \mathrm{Re} (Spec(X(G,S))).$$ 
	
	In Section \ref{sec:2}, we use Theorem \ref{thm: XGSy-S} to study the nature of the spectrum of GP-graphs $\G(k,q)$. 
	In Theorem~\ref{thm: real spec} we prove that $Spec(\G(k,q)) \subset \R$ if and only if $\G(k,q)$ is undirected. 
	This completes the characterization of the nature of the spectrum of GP-graphs $ \G(k,q)$ in terms of the parameters. Namely, binary GP-graphs (i.e., defined over $\ff_{2^m}$) are always integral (and undirected); that is,
	$Spec(\G(k,2^m)) \subset \Z$
	for every $m$ and every $k\mid 2^m-1$. For any $q$ odd and any $k \mid q-1$ one has
	$$ Spec(\G(k,q)) \subset \R \: \Leftrightarrow \: k\mid \tfrac{q-1}2 \qquad \text{and} \qquad 
	Spec(\G(k,q)) \subset \Z \: \Leftrightarrow \: k \mid \tfrac{q-1}{p-1}.$$ 
	We also study the nature of the spectrum of GP-graphs arithmetically. In Theorem~\ref{thm: nature spec v2} we obtain a similar kind of result as in Theorem \ref{thm: XGSy-S} but for GP-graphs. In particular, under certain conditions, we compare the graph $\G(\frac k2,q)$ and its complex spectrum with the two oriented versions of $\G(k,q)$ and their real spectrum. 
	In Corollary \ref{coro: wkqs exp} we completely determine the nature of the spectrum of $\G(k,q)$ --integral, real, or complex-- in terms of the divisors of $k$. In particular, for each odd $q$ fixed, we obtain the exact number of GP-graphs $\G(k,q)$ having a complex or real spectrum. Namely, if $q=2^t r+1$ and $r=p_1^{e_1} \cdots p_s^{e_s}$ is the prime factorization of $r$, then there are 
	$$ N=(e_1+1) \cdots (e_s+1) $$
	directed GP-graphs $\G(2^{t}s, q)$ with $s\mid r$ (complex spectrum) and $tN$ undirected GP-graphs $\G(2^{t'}s, q)$ with $t'<t$ and $s\mid r$ (real spectrum).
	
	In the next section, we give a list of families of GP-graphs with known spectrum, and we recall the eigenvalues in each case. Then we check that the nature of the spectrum corresponds to the conditions obtained in the previous section. 
	In particular, we recall the GP-graphs with small $k$, such as $\G(k,q)$ with $k=1,2,3,4$, and GP-graphs with $k$ not fixed, such as cycles (directed and undirected), Hamming GP-graphs $\G(\frac{p^{bm}-1}{b(p^m-1)}, p^{bm})$, and semiprimitive GP-graphs (see Examples \ref{exam: complete}--\ref{exam: semip}).

	In Section \ref{sec: integral}, we study integral GP-graphs in more detail. In Corollary \ref{coro: integral GPs} we express the condition of integrality of $\G(k,q)$ in terms of the $p_i$-adic valuations of $k$, for every prime divisor $p_i$ of $\frac{q-1}{p-1}$. We also compute the number of integral GP-graphs over $\ff_q$, complementing similar results obtained 
	in Corollary \ref{coro: wkqs exp} for the number of GP-graphs with real or complex spectrum.  
	In Propositions \ref{prop: finite families of integral GPs} and \ref{prop: families of integral GPs} we give infinite families of integral GP-graphs. In Proposition \ref{prop: towers of integral GPs} we show that given an integral GP-graph $\G(k,q)$ then $\G(k\frac{q^a-1}{q-1}, q^a)$ is also integral for every $a\in \N$. In Proposition \ref{prop: cyclotomic} we use cyclotomic polynomials $\Phi_d(x)$ to get integral GP-graphs of the form $\G(\Phi_d(p), p^dt)$ with $d,t \in \N$. In Theorem \ref{thm: general integral GP-graphs} we apply Proposition \ref{prop: towers of integral GPs} to Propositions \ref{prop: finite families of integral GPs}, \ref{prop: families of integral GPs} and \ref{prop: cyclotomic} to get four different 2-parameter infinite families of integral GP-graphs of the form
	$$\{ \G(k \tfrac{q^at-1}{q^t-1}, q^{at})\}_{a,t \in \N},$$
	where $k$ and $q$ satisfy certain mild conditions.

	The next two sections are devoted to directed GP-graphs.
	In Section \ref{sec:3}, we study the spectrum of GP-digraphs. 
	In Theorem \ref{thm: directed SRG}, we show that any GP-digraph has at least 3 different eigenvalues and then we characterize all GP-digraphs having exactly 3 eigenvalues. They are specifically the directed Paley graph or disjoint unions of it. More precisely, a directed GP-graph $\G=\G(k,p^m)$ has 3 different eigenvalues if and only if $k=2 \frac{p^m-1}{p^a-1}$ where $a=ord_n(p)$ with $n=\frac{p^m-1}{k}$ and $p^a \equiv 3 \pmod 4$. That is, in such case we have
	$$ \G=\G(2\tfrac{p^m-1}{p^{a}-1}, p^m) = \vec{\mathcal{P}}_{p^a} \sqcup \cdots \sqcup \vec{\mathcal{P}}_{p^a} \qquad 
	(\text{$p^{m-a}$ times}).$$
	The spectra of these graphs are given explicitly in the theorem.

	In Section \ref{sec: GP-digraphs}, we study the periods of GP-digraphs. 
	For a digraph $G$, the \textit{period} of $G$ is the greatest common divisor between all the lengths of the directed cycles in $G$. In Theorem~\ref{thm: GPperiods} we show that any GP-digraph $\G(k,q)$ has period 1, except for $\G(q-1,q)$, with $q=p^m$, which are disjoint unions of directed cycles $\vec{C}_p$, hence having period $p$. 
	As a consequence, in Proposition \ref{prop: real spec S1} we show that for any directed GP-graph $\G(k,q)$ we have
	$$ Spec(\G(k,q)) \cap \tfrac{q-1}{k} \mathbb{S}^1  = \{\tfrac{q-1}{k}\} $$ 
	except for $k=q-1$, that is, except for the union of directed $p$-cycles.
	
	In Section \ref{sec: Waring} we consider weak Waring numbers over finite fields. 
	By applying Theorem~\ref{thm: real spec}, in Theorem \ref{thm: wkq} we show that the weak Waring number $w(k,q)$ exists if and only if the Waring number $g(k,q)$ exists (which in turn happens if and only if the graph $\G(k,q)$ is connected) and in this case, we have 
	\begin{equation} \label{eq: wkq reduction}
		w(k,q) = \begin{cases}
			g(k,q) & \qquad \text{if $\G(k,q)$ is undirected,} \\[1mm]
			g(\frac k2,q) & \qquad \text{if $\G(k,q)$ is directed,}
		\end{cases} 
	\end{equation}
	reobtaining a result in \cite{CiCo}, in a simpler way using graphs. 
	In this way, the study of weak Waring numbers reduces to the study of Waring numbers in a simple way, in particular solving the general problem.
	Additionally, in Corollary \ref{coro: wkg=gkq} we show that 
	$w(k,q)$ is the diameter of $W(k,q)=Cay(\ff_q, \check R_k)$, the symmetrized graph of $\G(k,q)=Cay(\ff_q,R_k)$, in full analogy with the known result that $g(k,q)$ is the diameter of $\G(k,q)$.
	In symbols, 
	$$ w(k,q) = {\rm diam} \big( Cay(\ff_q, R_k \cup (-R_k)) \big). $$
	Then, using \eqref{eq: wkq reduction} and a reduction formula for Waring numbers, in Theorem~\ref{thm: reduction wkq} we obtain the following reduction formula for weak Waring numbers
	
	$$ w(\tfrac{p^{ab}-1}{bc},p^{ab}) = b w(\tfrac{p^a-1}{c}, p^{a})$$  
	for certain positive integers $a$ and $b$.
	
	Finally, in Section \ref{sec: examples} we consider GP-graphs over the finite fields $\ff_{5^2}, \ff_{7^2}, \ff_{3^4}$ and $\ff_{2^8}$. 
	We give all the GP-graphs and their description as known graphs when possible. We show which has integral, real or complex spectrum. For the connected GP-graphs $\G(k,q)$ considered, we compute all associated weak Waring numbers $w(k,q)$.

	\section{The nature of the spectrum of some Cayley graphs} \label{sec:2a}
	If $G$ is an undirected graph, its adjacency matrix is symmetric and so its spectrum is real.
	In general, for an arbitrary graph $G$, we have that $Spec(G) \subset \mathbb{C}$. 
	The problem to decide if a directed graph has non-real spectrum is, in general, a difficult and elusive problem (see for instance the 2010's survey \textit{Spectra of digraphs} by Brualdi \cite{Br}). 
	
	On the other hand, we recall that there are no graphs having rational non-integral spectrum. The adjacency matrix $A$ of a graph $G$ is integral (only have $0$'s and $1$'s). Thus, the characteristic polynomial of $A$ is monic with integral coefficients and the eigenvalues are its zeros. Thus, the eigenvalues $\lambda$ are algebraic integers implying that if $\lambda \in \mathbb{Q}$ then $\lambda \in \Z$. 
	
	Hence, given an arbitrary graph, to study the \textit{nature of the spectrum} is to decide whether it is real or not; and, if real, whether it is integral or not (we will call this the \textit{integrality problem}). 
	Classifying graphs with integral spectrum is, in general, a difficult open problem (see \cite{HS}).

	Here, we will first study the real versus complex nature of some general Cayley graphs (in the next section we will focus on the subclass of GP-graphs). 
	Given a group $G$ and a subset $S$ of $G$, the \textit{Cayley graph} $X(G,S)$ is the directed graph having vertex set $G$ and where there is an oriented edge (arc) from $x$ to $y$, denoted $\vec{xy}$, if and only if $yx^{-1}\in S$. 
	It is customary to assume that $e\notin S$ so that $G$ has no loops, where $e$ is the identity in $G$. 
	Also, one assumes that $S\ne \varnothing$, since $X(G,\varnothing)$ is the empty graph with $|G|$ vertices and no edges. 
	
	If $S$ is symmetric, that is closed by inversion $S=S^{-1}$ (for instance if $S$ is a subgroup), $\vec{xy}$ is an arc if and only if $\vec{yx}$ is an arc. In this case, $xy$ is usually considered as an undirected edge instead of a double bi-oriented arc, and hence $X(G,S)$ is undirected. When $G$ is abelian, we write $0$ for $e$, $-S$ for $S^{-1}$ and $y-x$ instead of $yx^{-1}$, as usual.  
	We have that 
	$$X(G,S) \text{ is undirected } \qquad \Leftrightarrow \qquad S=S^{-1}.$$ 
	Hence, $X(G,S)$ is directed if and only if $S\ne S^{-1}$.

	\subsection*{Non-mixed Cayley graphs}
	In the remainder of the section, we will study properties of directed Cayley graphs $X(G,S)$ under the condition 
	\begin{equation} \label{eq: S cap S⁻1 = void}
		S \cap S^{-1} = \varnothing 
	\end{equation}
	(such $S$ is said \textit{antisymmetric}) which will ensure that the graphs are either directed or undirected but not mixed (i.e., graphs having both directed and undirected edges). 
	\begin{lem} \label{lem: directed XGS}
		If the Cayley graph $X(G,S)$ is directed with $S\cap S^{-1} = \varnothing$, then all of its edges are directed. 
	\end{lem}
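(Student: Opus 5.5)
The plan is to argue directly from the definition of arcs in $X(G,S)$ and show that no pair of opposite arcs can ever occur. This means we never need to use the hypothesis that $X(G,S)$ is directed beyond the fact that $S\neq\varnothing$. The statement then reduces to an elementary computation with the condition \eqref{eq: S cap S⁻1 = void}.

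First, recall that an edge of $X(G,S)$ between $x$ and $y$ is undirected exactly when both arcs $\vec{xy}$ and $\vec{yx}$ are present. By definition, $\vec{xy}$ is an arc if and only if $yx^{-1}\in S$, and $\vec{yx}$ is an arc if and only if $xy^{-1}\in S$. The key observation is that $xy^{-1}=(yx^{-1})^{-1}$. So suppose both arcs exist, and set $s=yx^{-1}$. Then $s\in S$, and also $s^{-1}\in S$, which gives $s\in S^{-1}$. Hence $s\in S\cap S^{-1}$, contradicting $S\cap S^{-1}=\varnothing$.

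Consequently every pair of adjacent vertices is joined by exactly one arc, oriented in one direction only, and all edges are directed. Since $S\neq\varnothing$, there is at least one arc, so the statement is not vacuous. We should also note that $e\notin S$ is automatic here: $e=e^{-1}$, so $e\in S$ would force $e\in S\cap S^{-1}$. Thus no loops appear, and no degenerate case $x=y$ needs separate treatment.

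There is no real obstacle. The only point needing care is to state precisely what ``undirected edge'' means in this paper's convention, namely a double bi-oriented arc, so that excluding simultaneous arcs $\vec{xy}$ and $\vec{yx}$ is exactly what is required.
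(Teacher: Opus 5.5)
Your proof is correct and follows the same approach as the paper, which only says the lemma is ``clear by the previous comments'' on when both arcs $\vec{xy}$ and $\vec{yx}$ are present. You have written that argument out in full: the presence of both arcs would put $yx^{-1}$ in $S\cap S^{-1}$, contradicting the hypothesis.
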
	
	
	\begin{proof}
		This is clear by the previous comments.
	\end{proof}

	The following result due to Klin et al.\@ from 2004 (see Lemma 3.2 in \cite{KMMZ})
	gives a condition for a regular directed graph to have at least one complex non-real eigenvalue, hence answering Brualdi's question in this case. We give a proof for completeness.

	\begin{lem}[\cite{KMMZ}, Lemma 3.2] \label{lem KMMZ}
		Let $\G$ be a non-empty directed regular graph without undirected edges. 
		Then, $\G$ has at least one non-real eigenvalue.
	\end{lem}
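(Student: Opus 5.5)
The argument is a trace computation. Let $A$ be the adjacency matrix of $\G$ and suppose $\G$ is $n$-regular on $N$ vertices with $n\ge 1$; this is what ``non-empty'' gives, since regularity forces every vertex to have out-degree $n>0$. The entry $(A^2)_{ii}$ counts closed walks of length $2$ at vertex $i$. Such a walk $i\to j\to i$ needs both arcs $\vec{ij}$ and $\vec{ji}$, that is, an undirected edge. By hypothesis $\G$ has no undirected edges, so $\Tr(A^2)=0$. (Loops are excluded throughout, as in the Cayley setting with $e\notin S$.)

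Now argue by contradiction and suppose every eigenvalue $\lambda_1,\dots,\lambda_N$ of $A$ is real, listed with algebraic multiplicity. Then
$$0=\Tr(A^2)=\sum_{i=1}^N \lambda_i^2 .$$
Each term is a nonnegative real, so every $\lambda_i$ is $0$. But $A$ is $n$-regular, so $A\mathbf{1}=n\mathbf{1}$, which shows $n\ne 0$ is an eigenvalue. This contradiction proves that some eigenvalue is non-real.

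The argument has essentially no obstacles. The only points to check are that the sum of $\lambda_i^2$ over eigenvalues with algebraic multiplicity equals $\Tr(A^2)$, which follows from triangularizing $A$ over $\C$, and that ``directed regular without undirected edges'' really means that $a_{ij}a_{ji}=0$ for all $i,j$. Only out-regularity is actually used, through $A\mathbf{1}=n\mathbf{1}$.
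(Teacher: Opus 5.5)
Your proof is correct and follows the same route as the paper: $\Tr(A^2)=0$ because there are no closed walks of length $2$, while all-real eigenvalues would give $\sum \lambda_i^2>0$ since the degree $n>0$ is an eigenvalue. Your explicit remarks about excluding loops and counting eigenvalues with algebraic multiplicity make the same argument slightly more careful.
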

	
	\begin{proof}
		The empty graph with $m$-vertices has real spectrum $\{[0]^m\}$. 
		Hence, let $\G$ be a non-empty $n$-regular directed graph without undirected edges and suppose that 
		$Spec (\G) \subseteq \mathbb{R}$.
		Then, $\G$ has no closed directed walks of length $2$. This implies that the diagonal of its adjacency matrix 
		$A$ is zero, and hence $\Tr(A^{2})=0$.
		Also, $Spec( A^{2}) = \{ \lambda^{2}: \lambda \in Spec(\G) \}$. 
		Since $\lambda \in \mathbb{R}$ for all $\lambda\in Spec(\G)$ and the regularity degree $n$ is the biggest eigenvalue (which is positive), we have that
		$$0=\Tr(A^{2})=\sum_{\lambda \in Spec(\G)} \lambda^{2}>0,$$
		which is absurd. 
		Therefore, $\G$ has at least one non-real eigenvalue, as asserted. 
	\end{proof}

	Due to the comments at the beginning of the section and the previous lemma, we will make use of the following notational and terminological convention.
	\begin{notation}
		From now on, when we write 
		$$Spec(\G) \subset \C,$$ 
		we understand that the graph $\G$ has at least one complex non-real eigenvalue and we say that $\G$ \textit{has complex spectrum}.
		Also, when we write $Spec(\G) \subset \R$ we understand that $\G$ has real (non-integral) spectrum.	
	\end{notation}

	We have the following necessary condition on $S$ for a Cayley graph $X(G,S)$ to have complex spectrum.
	\begin{prop} \label{thm: XGS real spec}
		If the Cayley graph $X(G,S)$ is directed with $S \cap S^{-1}=\varnothing$, then $Spec(X(G,S)) \subset \mathbb{C}$. 
	\end{prop}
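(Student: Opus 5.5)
The plan is to reduce the statement to Lemma \ref{lem KMMZ} by checking its hypotheses for $\G = X(G,S)$. Three things are needed: $\G$ is non-empty, $\G$ is regular, and $\G$ has no undirected edges. Once these hold, Lemma \ref{lem KMMZ} gives a non-real eigenvalue. By the Notation convention, that is exactly the meaning of $Spec(X(G,S)) \subset \C$.

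The individual checks are as follows.

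\emph{Non-empty.} Since $X(G,S)$ is directed, we have $S \neq S^{-1}$, so in particular $S \neq \varnothing$. Hence the graph has arcs. This is also the standing convention that $S \ne \varnothing$.

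\emph{Regular.} Every vertex $x$ has out-neighbours $\{sx : s \in S\}$ and in-neighbours $\{s^{-1}x : s \in S\}$. Both sets have exactly $|S|$ elements, because $s \mapsto sx$ and $s\mapsto s^{-1}x$ are injective. So $X(G,S)$ is a $|S|$-regular digraph, equal in-degree and out-degree, as required in the lemma.

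\emph{No undirected edges.} An undirected edge means a pair of arcs $\vec{xy}$ and $\vec{yx}$. That requires $yx^{-1} \in S$ and $xy^{-1} = (yx^{-1})^{-1} \in S$, so $yx^{-1} \in S \cap S^{-1} = \varnothing$, a contradiction. This is precisely Lemma \ref{lem: directed XGS}.

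There is no real obstacle here. The only point needing care is that the proof of Lemma \ref{lem KMMZ} uses that $\Tr(A^2)=0$, i.e.\ there are no closed walks of length $2$, and that the largest eigenvalue equals the regularity degree $|S|>0$. Both follow from the three checks above, and $\Tr(A^2)=0$ is just the antisymmetry condition restated. The argument needs no hypothesis on $G$ (for instance, abelianness).
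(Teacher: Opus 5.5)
Your proof is correct and follows the same route as the paper: use Lemma~\ref{lem: directed XGS} to see that all edges are directed, note that $X(G,S)$ is $|S|$-regular, and apply Lemma~\ref{lem KMMZ}. Your explicit checks of non-emptiness ($S \neq S^{-1}$ forces $S \neq \varnothing$) and of equal in- and out-degrees make precise what the paper leaves implicit.
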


	\begin{proof}
		Since $S \cap S^{-1}=\varnothing$, by Lemma \ref{lem: directed XGS} all the edges of $X(G,S)$ are directed. Since $X(G,S)$ is $|S|$-regular we can apply Lemma \ref{lem KMMZ}, and therefore $X(G,S)$ has at least one non-real eigenvalue, as asserted.
	\end{proof}

	A case which is of major interest to us (here and in other works) is when $G$ is a finite commutative ring $R$ (in particular a finite field $\ff_q$). In this case, $G$ is abelian and the condition $S=-S$ holds if and only if $-1\in S$.  
	
	For a group $G$ and a subset $S \subset G$ which is not symmetric, we can consider the \textit{symmetrization} of $S$,
	\begin{equation} \label{eq: symmetrized S}
		\check S = S \cup S^{-1},
	\end{equation}
	a symmetric subset of $G$ containing $S$ (clearly, if $S$ is symmetric, then $\check S=S$). 
	We have that $X(G,S)$ is directed and $X(G,\check S)$ is undirected. 
	There is the following neat relation between these graphs and their spectra.

	\begin{thm} \label{thm: XGSy-S}
		Consider the directed Cayley graph $X(G,S)$ with $S\cap S^{-1}=\varnothing$ and let $\check S = S \cup S^{-1}$. 
		Then, the graph $X(G,\check S)$ is undirected and $2|S|$-regular while the graphs $X(G,S)$ and $X(G,S^{-1})$ are directed and $|S|$-regular. 
		Also, we have the graph union decompositions 
		\begin{equation} \label{eq: XGS u XGS^-1}
			X(G,\check S) = X(G,S) \cup X(G,S^{-1}) = \overset{_{\rightarrow}}{X}(G,S) \cup \overset{_{\leftarrow}}{X}(G,S), 
		\end{equation}
		where $\overset{_{\rightarrow}}{X}(G,S)$ denotes $X(G,S)$ with the given orientation and $\overset{_{\leftarrow}}{X}(G,S)$ denotes $X(G,S)$ with the reverse orientation. 
		Moreover, if $G$ is abelian the spectra of these graphs satisfy 
		\begin{equation} \label{eq: Spec XGS*}
			Spec(X(G,\check S)) = 2 \, \mathrm{Re} (Spec(X(G,S))). 
		\end{equation}
	\end{thm}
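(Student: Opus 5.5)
The structural assertions come straight from the definitions, and the spectral identity follows from the character theory of finite abelian groups.

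\emph{Structural part.} Since $\check S = S\cup S^{-1}$ satisfies $\check S^{-1}=\check S$, the graph $X(G,\check S)$ is undirected, as noted at the beginning of the section. Because $S\cap S^{-1}=\varnothing$ and inversion is a bijection, we get $|\check S| = |S|+|S^{-1}| = 2|S|$. Hence $X(G,\check S)$ is $2|S|$-regular, while $X(G,S)$ and $X(G,S^{-1})$ are $|S|$-regular.

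Both $X(G,S)$ and $X(G,S^{-1})$ are directed. If $S^{-1}=S$ held, then $S=S\cap S^{-1}=\varnothing$, which is excluded; the same argument applies to $S^{-1}$. By Lemma~\ref{lem: directed XGS}, all of their edges are directed.

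For the decomposition \eqref{eq: XGS u XGS^-1}, recall that $\vec{xy}$ is an arc of $X(G,\check S)$ if and only if $yx^{-1}\in S$ or $yx^{-1}\in S^{-1}$. Since $S\cap S^{-1}=\varnothing$, the arc set of $X(G,\check S)$ is the disjoint union of the arc sets of $X(G,S)$ and $X(G,S^{-1})$.

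Moreover, $yx^{-1}\in S^{-1}$ if and only if $xy^{-1}\in S$, that is, $\vec{xy}$ is an arc of $X(G,S^{-1})$ exactly when $\vec{yx}$ is an arc of $X(G,S)$. So $X(G,S^{-1})$ is $X(G,S)$ with every arc reversed. In matrix terms, $A_{S^{-1}} = A_S^{T}$ and $A_{\check S} = A_S + A_S^{T}$.

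\emph{Spectral part, $G$ abelian.} The key tool is that the adjacency matrices of Cayley digraphs on an abelian group are simultaneously diagonalized by the characters $\chi\in\widehat G$. Let $v_\chi = (\chi(g))_{g\in G}$. Then
\[ A_S v_\chi = \chi(S)\, v_\chi, \qquad \text{where } \chi(S)=\sum_{s\in S}\chi(s). \]
To verify this, note that $(A_S v_\chi)_x = \sum_{s\in S}\chi(sx) = \chi(S)\chi(x)$. The $|G|$ vectors $v_\chi$ are orthogonal, hence form a basis. Consequently
\[ Spec(X(G,S)) = \{\chi(S)\}_{\chi\in\widehat G}, \]
counted with multiplicity. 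The same basis diagonalizes $A_{S^{-1}}$, with eigenvalues $\chi(S^{-1}) = \sum_{s\in S}\overline{\chi(s)} = \overline{\chi(S)}$, since character values are roots of unity.

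By linearity, $A_{\check S} = A_S + A_{S^{-1}}$ acts on $v_\chi$ by the scalar $\chi(S)+\overline{\chi(S)} = 2\,\mathrm{Re}\,\chi(S)$. Therefore
\[ Spec(X(G,\check S)) = \{2\,\mathrm{Re}\,\chi(S)\}_{\chi\in\widehat G} = 2\,\mathrm{Re}\big(Spec(X(G,S))\big), \]
with multiplicities preserved, which is \eqref{eq: Spec XGS*}.

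The one point needing care is that the identity must be read multiset-wise, pairing eigenvalues through the common eigenvector $v_\chi$. It is not enough to know that each matrix is diagonalizable separately. This simultaneous diagonalization is precisely where commutativity of $G$ is used; it also guarantees that $A_S$ is normal, so $A_S$ and $A_S^{T}$ share the eigenbasis $\{v_\chi\}$.
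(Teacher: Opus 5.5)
Your proof is correct and follows essentially the same route as the paper. The structural claims rest on $|\check S|=2|S|$, on $X(G,S\cup T)=X(G,S)\cup X(G,T)$ for disjoint $S,T$, and on $yx^{-1}\in S^{-1}\iff xy^{-1}\in S$; the spectral identity rests on $\chi(\check S)=\chi(S)+\chi(S^{-1})=\chi(S)+\overline{\chi(S)}$. Your explicit check that the vectors $v_\chi$ are common eigenvectors, which pairs the eigenvalues multiset-wise through $\chi$, simply makes precise what the paper takes for granted when it cites the standard formula $\lambda_\chi=\chi(T)$.
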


	\begin{proof}
		The graph $X(G,\check S)$ is undirected since $\check S$ is symmetric and $2|S|$-regular since 
		$$|\check S|=|S|+|S^{-1}|=2|S|,$$ 
		where we have used the hypothesis $S\cap S^{-1}=\varnothing$ and that $|S|=|S^{-1}|$, the inversion being a bijection from $S$ to $S^{-1}$.   
		Also, since $S$ and $S^{-1}$ are not symmetric we have that $X(G,S)$ and $X(G,S^{-1})$ are directed $|S|$-regular graphs without undirected edges, by Lemma \ref{lem: directed XGS}.
		
		The decompositions of $X(G,\check S)$ given in \eqref{eq: XGS u XGS^-1} are clear from the definitions.
		In fact, the first equality follows from the identity 
		$$X(G, S\cup T) = X(G,S) \cup X(G,T)$$ 
		for every pair of disjoint subsets $S,T$ of $G$. For the second one, notice that 
		$$\vec{xy} \in E(X(G,S)) \: \Leftrightarrow \: yx^{-1} \in S \: \Leftrightarrow \: (yx^{-1})^{-1} \in S^{-1} 
		\: \Leftrightarrow \: \vec{yx} \in E(X(G,S^{-1})).$$
		
		With respect to the spectrum, it is known that the eigenvalues of a Cayley graph $X(G,T)$ are given by 
		\begin{equation} \label{eq: chi(S)}
			\lambda_{\chi} = \chi(T) = \sum_{g \in T} \chi(g)
		\end{equation}
		where $\chi: G \rightarrow \mathbb{S}^1 \subset \C^*$ runs over the set $\hat G$ of (irreducible) characters of $G$, for $G$ abelian (see \cite{LZ2}).
		Thus, for any character $\chi$ of $G$, since $S$ and $S^{-1}$ are disjoint, we have that 
		\begin{equation} \label{eq: chi2Re}
			\begin{aligned}
				\chi(\check S) & =  \sum_{g \in S\cup S^{-1}} \chi (g) = \sum_{g \in S} \chi (g) + \sum_{g \in S^{-1}} \chi (g)  \\ & = \chi(S) + \chi(-S) = \chi(S) + \overline{\chi(S)} =  2\, \mathrm{Re}(\chi(S)),	
			\end{aligned}	
		\end{equation}
		where we have used that $\chi(-g) = \chi^{-1}(g) = \overline{\chi(g)}$, and this implies \eqref{eq: Spec XGS*}. 
	\end{proof}
	
	We point out that the hypothesis of the antisymmetry of $S$ in the theorem is necessary, as we can see in the next example borrowed from \cite{ChP}.
	
	\begin{exam}
		Consider the Cayley graphs $\G_1=X(G_1,S_1)$ and $\G_2 = X(G_2,S_2)$ where  
		$G_1=\Z_{16}$ and $G_2=\Z_4\times\Z_4$ with 
		\begin{gather*}
			S_1=\{1,2,4,5,9,10,12,13\} \subset \Z_{16}, \\ 
			S_2=\{(0,1),(0,2),(1,0),(1,2),(2,1),(2,2),(3,1),(3,3)\} \subset \Z_4 \times \Z_4.
		\end{gather*}
		These graphs are connected 8-regular graphs of 16 vertices without loops. 
		Since $S_1$ and $S_2$ are not symmetric, the graphs $\G_1$ and $\G_2$ are directed. 
		However, since 
		$$S_1 \cap (-S_1) = \{4,12\} \ne \varnothing \qquad \text{and} \qquad S_2 \cap (-S_2) = \{(0,2),(2,2)\} \ne \varnothing,$$ 
		they are mixed graphs. 
		
		In Example 6.6 of \cite{ChP}, it is proved that $\G_1$ and $\G_2$ are non-isomorphic isospectral Cayley graphs, with spectrum given by
		$$Spec(\G_i) = \{[8]^1,[4i]^1,[-2+2i]^2,[0]^9,[-2-2i]^2,[-4i]^1\} \subset \Z[i] $$ 
		for $i=1,2$.
		It is easy to see that the symmetrizations of $S_1$ and $S_2$ are
		$\check S_1 = \Z_{16} \smallsetminus \{0,8\}$ and $\check S_2 = \Z_4 \times \Z_4 \smallsetminus \{(0,0), (2,0)\}$.
		One can check that, for $i=1,2$, we have that 
		$$ Spec(\check \G_i) = \{ [14]^1, [0]^8, [-2]^7 \} $$
		with $\check \G_i=X(G_i, \check S_i)$, which is different from 
		$$2 {\rm Re}(Spec(\G_i)) = \{ [16]^1, [0]^{11}, [-4]^4 \} .$$
		Thus, equation \eqref{eq: Spec XGS*} does not hold for this mixed graphs.
		\hfill $\diamond$
	\end{exam}

	\section{The nature of the spectrum of the GP-graphs $\G(k,q)$}  \label{sec:2}
	From now on, we focus on a particular family of Cayley graphs $X(G,S)$, those with $G=\ff_q$ and $S=R_k=\{x^k: x\in \ff_q^*\}$, that is the generalized Paley graphs over finite fields which we have denoted $\G(k,q)$ in \eqref{eq: Gkq}.
	For these GP-graphs, the problem of determining the nature of the spectrum turns up to be extremely simple as we next show.

	\subsection{The nature of $Spec(\G(k,q))$ via directedness}
	First, as a consequence of some general results, we will show that a GP-graph has real spectrum if and only if it is undirected. 
	
	We begin by showing that in the directed case, a GP-graph has no undirected edges; i.e., GP-graphs are no mixed graphs. 
	\begin{lem} \label{lem: no edges}
		If the GP-graph $\G(k,q)$ is directed then all of its edges are directed; that is, $\G(k,q)$ is oriented.
	\end{lem}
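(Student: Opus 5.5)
The plan is to reduce the statement to Lemma \ref{lem: directed XGS}: since $\G(k,q)=X(\ff_q,R_k)$ is a Cayley graph on the abelian group $(\ff_q,+)$, it is enough to show that $R_k\cap(-R_k)=\varnothing$ whenever $\G(k,q)$ is directed. Once this holds, Lemma \ref{lem: directed XGS} says every edge of $\G(k,q)$ is directed, so $\G(k,q)$ is oriented.

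The key observation is that $R_k=(\ff_q^*)^k$ is a subgroup of the cyclic group $\ff_q^*$. Hence $-R_k=(-1)R_k$ is a coset of $R_k$ in $\ff_q^*$. Two cosets of a subgroup are either equal or disjoint, so either $-R_k=R_k$ or $R_k\cap(-R_k)=\varnothing$.

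It remains to exclude the first alternative. The equality $-R_k=R_k$ holds exactly when $-1\in R_k$, which is the same as $R_k=-R_k$, i.e.\ $S=S^{-1}$ for $S=R_k$. By the remarks at the start of Section \ref{sec:2a}, this is equivalent to $\G(k,q)$ being undirected. Since $\G(k,q)$ is assumed directed, we get $-1\notin R_k$, and therefore $R_k\cap(-R_k)=\varnothing$.

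As a consistency check, $\G(k,q)$ is directed only when $q$ is odd and $k\nmid\frac{q-1}{2}$. In that situation $-1$ is the unique element of order $2$ in $\ff_q^*$, and it lies in the subgroup $R_k$ of order $n=\frac{q-1}{k}$ if and only if $n$ is even, i.e.\ if and only if $k\mid\frac{q-1}{2}$. This agrees with the criterion above. There is no real obstacle in the argument; the only idea needed is the coset dichotomy, which comes from $R_k$ being a multiplicative subgroup.
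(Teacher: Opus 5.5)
Your proposal is correct and follows essentially the same route as the paper: directedness gives $-1\notin R_k$, the coset dichotomy for the subgroup $R_k\le\ff_q^*$ then gives $R_k\cap(-R_k)=\varnothing$, and Lemma \ref{lem: directed XGS} finishes the proof. Your extra consistency check via the parity of $n=\frac{q-1}{k}$ is accurate, though the proof does not need it.
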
	
	
	\begin{proof}
		Since $\G(k,q)$ is directed, we have that $R_k \ne -R_k$ and, in particular, $-1\not \in R_k$ (for if not, $R_k=-R_k$). 
		Moreover, 
		\begin{equation} \label{eq: Rky-Rk} 
			R_k \cap (-R_k) = \varnothing, 
		\end{equation}
		since $R_k$ is a subgroup of the cyclic group $\ff_{q}^*$ and $-R_k$ is the left coset of $R_k$  containing $-1$. Hence, we are under the hypothesis of Lemma \ref{lem: directed XGS} and the result follows from it.
	\end{proof}

	We are now in a position to give the characterization of GP-graphs with real (resp.\@ complex) spectrum.
	\begin{thm} \label{thm: real spec}
		The GP-graph $\G(k,q)$ is undirected if and only if $Spec(\G(k,q)) \subset \mathbb{R}$. 
		Equivalently, $\G(k,q)$ is directed if and only if $Spec(\G(k,q)) \subset \mathbb{C}$.
	\end{thm}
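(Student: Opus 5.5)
The plan is to prove the two implications separately. The equivalent reformulation in terms of directedness and complex spectrum then follows by contraposition, using the Notation convention that $Spec(\G)\subset\C$ means there is at least one non-real eigenvalue.

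For the forward direction, suppose $\G(k,q)$ is undirected. By the discussion in Section~\ref{sec:2a} this means $R_k=-R_k$, so the adjacency matrix $A$ of $\G(k,q)=Cay(\ff_q,R_k)$ satisfies $A_{xy}=1$ if and only if $y-x\in R_k$, if and only if $x-y\in R_k$. Hence $A$ is a real symmetric matrix and all its eigenvalues are real, so $Spec(\G(k,q))\subset\R$. Alternatively, the character formula \eqref{eq: chi(S)} gives $\overline{\chi(R_k)}=\chi(-R_k)=\chi(R_k)$ for every additive character $\chi$ of $\ff_q$, which is the same conclusion.

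For the converse, suppose $\G(k,q)$ is directed. By Lemma~\ref{lem: no edges} and its proof, $R_k\ne -R_k$ forces $-1\notin R_k$. Since $R_k$ is a subgroup of $\ff_q^*$, the set $-R_k$ is a coset disjoint from $R_k$, so $R_k\cap(-R_k)=\varnothing$. Moreover $R_k\neq\varnothing$, because $1\in R_k$. We are then exactly in the hypotheses of Proposition~\ref{thm: XGS real spec}, taking $G=\ff_q$ and $S=R_k$. That proposition rests on Lemma~\ref{lem KMMZ}, applied to the non-empty $|R_k|$-regular graph without undirected edges, and it yields a non-real eigenvalue. Hence the spectrum is not real. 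Combining the two directions gives the equivalence, and the second statement is its contrapositive.

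There is no serious obstacle here. The only point needing care is the dichotomy: either $R_k=-R_k$ or $R_k\cap(-R_k)=\varnothing$. This is what rules out mixed GP-graphs and makes Proposition~\ref{thm: XGS real spec} applicable, and it follows from the coset argument already given in Lemma~\ref{lem: no edges}.
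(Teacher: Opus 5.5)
Your proof is correct and follows essentially the same route as the paper. For the undirected case you use symmetry of the adjacency matrix. For the directed case you use the coset argument giving $R_k\cap(-R_k)=\varnothing$, then apply Proposition~\ref{thm: XGS real spec}, which in turn rests on Lemma~\ref{lem KMMZ}.
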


	\begin{proof}
		Clearly, if $\G(k,q)$ is undirected, then its spectrum is real since its adjacency matrix is symmetric.
		Conversely, if $\G(k,q)$ is directed, by \eqref{eq: Rky-Rk} and Proposition \ref{thm: XGS real spec}
		we have that $Spec(\G(k,q))$ is complex.
	\end{proof}

	Now, we summarize the nature of the eigenvalues of the GP-graphs in terms of simple arithmetic conditions and through directedness.
	
	\begin{rem} \label{rem: nature}
		Given a GP-graph $\G=\G(k,q)$ with $k\mid q-1$, we have characterized when its spectrum is real or complex, and previously in \cite{PV3},  when it is integral. Namely, $Spec(\G)$ is real if and only if $\G$ is undirected (i.e., $q$ is even or $q$ is odd and $k\mid \frac{q-1}2$). Furthermore, if $k$ also divides $\frac{q-1}{p-1}$ then $Spec(\G)$ is integral (see \cite{PV3}). 
		
		Summing up, we have that binary GP-graphs (i.e., defined over $\ff_{2^m}$) are always integral (and undirected), that is
		\begin{equation} \label{eq: 2^m integral}
			Spec(\G(k,2^m)) \subset \Z
		\end{equation}
		for every $m$ and every $k\mid 2^m-1$. For any $q$ odd and any $k \mid q-1$ one has
		\begin{equation} \label{eq: nature}
			\begin{aligned}
				Spec(\G(k,q)) \subset \R \qquad & \Leftrightarrow \qquad k\mid \tfrac{q-1}2, \\[1mm] 
				Spec(\G(k,q)) \subset \Z \qquad & \Leftrightarrow \qquad k \mid \tfrac{q-1}{p-1}. 
			\end{aligned}
		\end{equation}
		The integrality problem will be treated in more detail in the next section.
		Also, by emphasis, we explicitly rewrite 
		\begin{gather} \label{eq: nature2}
			\begin{aligned}
				Spec(\G(k,q)) \subset  \C \qquad & \Leftrightarrow \qquad \G(k,q) \text{ is directed}, \\[1mm] 
				Spec(\G(k,q)) \subset  \R \qquad & \Leftrightarrow \qquad \G(k,q) \text{ is undirected}. 
			\end{aligned}
		\end{gather}
		In this way we have fully characterized the nature of the eigenvalues of a GP-graph.
	\end{rem}

	The study of the nature of the spectrum can be made more interesting if we consider other fields and rings besides $\C, \R$ and $\Z$ respectively, as we next make precise.
	\begin{rem}
		In \cite{PV7} we showed that the non-principal eigenvalues of $\G(k,q)$, i.e.\@ those different from $n=\frac{q-1}k$, are given by the Gaussian periods 
		$$ \eta_i^{(k,q)} = \sum_{x \in C_i^{(k,q)}} \zeta_p^{\Tr(x)} \in \mathbb{Q}(\zeta_p), \qquad 0\le i \le k-1,$$
		where $\zeta_p=e^{\frac{2\pi i}p}$ is the primitive $p$-th root of unity, $C_i^{(k,q)} = \omega^i \langle \omega^k \rangle$ is the coset 
		in $\ff_q^*$ with $\omega$ a generator of $\ff_q^*$ and $K=\mathbb{Q}(\zeta_p)$ is the cyclotomic field, the smallest number field containing $\mathbb{Q}$ and $\zeta_p$. 
		Thus, by a previous comment on rational eigenvalues at the beginning of Section~\ref{sec:2a}, we have that the spectrum of $\G(k,q)$ is contained in the ring of integers $\mathcal{O}_K$ of $K$, which equals $\Z[\zeta_p]$, that is
		$$ Spec(\G(k,q)) \subset \Z[\zeta_p].$$
		This ring has non-empty intersection with $\R$ and $\C$.
		Of course, the spectrum can actually be contained in smaller rings. 
		
		The problems to determine the smallest ring/field where the spectrum of a fixed $\G(k,q)$ lies (or in general all possible smallest rings for all the GP-graphs) and to characterize all GP-graphs with a fixed smallest ring are deeper and harder questions to consider under the name `nature of the spectrum'. 
	\end{rem}
	
	In this generality, the nature of the spectrum of GP-graphs over different fields and rings, as well the nature of the spectrum of mixed Cayley graphs (not necessarily directed or undirected) over finite rings (not necessarily finite fields) are much more challenging. 
	The complications of these tasks exceeds the scope of the present work.

	\subsection{The nature of $Spec(\G(k,q))$ arithmetically} \label{sec:2b}
	In the previous subsection we showed that the undirected (resp.\@ directed) GP-graphs $\G(k,q)$ are exactly those with real (resp.\@ complex) spectrum. We now go a step forward by giving this classification in terms of divisors of $k$ and $q-1$.	
	
	We first give a kind of analogous of Theorem \ref{thm: XGSy-S} in the particular case when  $X(G,S)$ is $X(\ff_q,(\ff_q^*)^k)=\G(k,q)$.	
	We will need the 2-adic valuation of $n$, denoted by $v_2(n)$; that is $v_2(n)=t$ if and only if $n=2^t m$ with $m$ odd.
	\begin{thm} \label{thm: nature spec v2}
		Let $q=p^m$, with $p$ an odd prime and $m\in \N$, and let $k \in \N$ such that $k\mid q-1$. 
		The graph $\G(k,q)$ is undirected if and only if $v_2(k)<v_2(q-1)$ or $v_2(k)=0$ (i.e.\@ $k$ is odd). Equivalently,  
		The graph $\G(k,q)$ is directed if and only if 
		\begin{equation} \label{eq: v2k} 
			v_{2}(k) = v_{2}(q-1) >0.
		\end{equation}
		In this case, $k$ is even, $\G(\frac{k}{2},q)$ is undirected and satisfies the relation
		\begin{equation} \label{eq: G dirs}
			\G(\tfrac k2,q) = \overset{_{\rightarrow}}{\G}(k,q) \cup \overset{_{\leftarrow}}{\G}(k,q),
		\end{equation} 
		where $\overset{_{\rightarrow}}{\G}(k,q)$ denotes the directed graph $\G(k,q)$ with the given orientation and $\overset{_{\leftarrow}}{\G}(k,q)$ denotes $\G(k,q)$ with the reverse orientation. Also, their spectra satisfy the relation
		\begin{equation} \label{eq: Spec Gk2q = 2Re Spec Gkq}
			Spec(\G(\tfrac k2,q)) = 2 \, \mathrm{Re} (Spec(\G(k,q))). 
		\end{equation}
	\end{thm}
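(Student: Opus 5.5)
The plan is to reduce everything to the known criterion stated in the Introduction: for $q$ odd, $\G(k,q)$ is undirected if and only if $k\mid \frac{q-1}2$, equivalently $-1\in R_k$. Write $q-1=2^t r$ with $r$ odd and $t=v_2(q-1)\ge 1$. Since $k\mid q-1$, we have $v_2(k)\le t$ and the odd part of $k$ divides $r$. Thus $k\mid \frac{q-1}{2}=2^{t-1}r$ holds if and only if $v_2(k)\le t-1$, i.e.\ $v_2(k)<v_2(q-1)$. The clause ``or $v_2(k)=0$'' is subsumed, because $t\ge 1$. Negating gives the directed criterion $v_2(k)=v_2(q-1)>0$ in \eqref{eq: v2k}, and in particular $k$ is even.

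For the second part, assume $\G(k,q)$ is directed. Then $\frac k2\mid q-1$ and $v_2(\frac k2)=t-1<t$, so $\G(\frac k2,q)$ is undirected by the first part. Next we compare connection sets. Let $\omega$ generate $\ff_q^*$. Then $R_k=\langle\omega^k\rangle$ and $R_{k/2}=\langle \omega^{k/2}\rangle$, and $R_k$ has index $2$ in $R_{k/2}$. The element $-1=\omega^{(q-1)/2}$ lies in $R_{k/2}$, since $\frac k2\mid\frac{q-1}2$, but not in $R_k$, since $k\nmid \frac{q-1}2$. Hence $R_{k/2}=R_k\sqcup(-R_k)=\check R_k$, and by \eqref{eq: Rky-Rk} (Lemma \ref{lem: no edges}) we have $R_k\cap(-R_k)=\varnothing$.

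Now apply Theorem \ref{thm: XGSy-S} with $G=\ff_q$ (abelian) and $S=R_k$. It gives $\G(\frac k2,q)=X(\ff_q,\check R_k)=\overset{_{\rightarrow}}{\G}(k,q)\cup\overset{_{\leftarrow}}{\G}(k,q)$, which is \eqref{eq: G dirs}. It also gives $Spec(\G(\frac k2,q))=2\,\mathrm{Re}(Spec(\G(k,q)))$, which is \eqref{eq: Spec Gk2q = 2Re Spec Gkq}.

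No step is a real obstacle. The only point needing care is the index-two coset identification $R_{k/2}=R_k\cup(-R_k)$, i.e.\ checking that $-1\in R_{k/2}\setminus R_k$ using the cyclicity of $\ff_q^*$. The spectral identity must also be read with multiplicities, matching characters $\chi$ one-to-one, as in the proof of Theorem \ref{thm: XGSy-S}.
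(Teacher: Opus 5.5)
Your proposal is correct and follows essentially the same route as the paper: you reduce the undirectedness criterion $k\mid\frac{q-1}2$ to a $2$-adic condition, identify $R_{k/2}=R_k\sqcup(-R_k)$, and then apply Theorem \ref{thm: XGSy-S}. The only cosmetic difference is in proving $R_{k/2}=R_k\sqcup(-R_k)$: the paper uses inclusion plus the cardinality count $|R_k\cup(-R_k)|=2\frac{q-1}{k}=|R_{k/2}|$, while you use the index-two coset decomposition with $-1\in R_{k/2}\setminus R_k$.
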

	
	\begin{note}
		$\G(\frac{k}{2},q)$ is the underlying undirected graph of the directed graph $\G(k,q)$. 	
	\end{note}

	\begin{proof}
		We know that $\G(k,q)$ is directed if $q$ is odd and $k \nmid \frac{q-1}2$ and, since $k\mid q-1$, this happens if and only if $v_2(k)=v_2(q-1)>0$ (hence $k$ is even).
		On the other hand, in the case that $\G(k,q)$ is directed, 
		since $k\mid q-1$ and $k$ is even, then $\frac{k}{2}\mid \frac{q-1}{2}$ and so $\G(\frac{k}{2},q)$ is undirected, as asserted.
		
		Now, we show the decomposition in \eqref{eq: G dirs}. 
		By \eqref{eq: Rky-Rk} we know that $R_k$ and $-R_k$ are disjoint sets, where $R_k=\{ x^{k} : x \in \ff_{q}^*\}$. By Theorem \ref{thm: XGSy-S}, it is enough to prove that the symmetrization 
		of $R_k$ equals $R_{\frac k2}$, that is
		\begin{equation} \label{eq: Rk Rk2}
			\check R_k = R_{k} \cup (-R_{k}) = R_{\frac{k}{2}}.
		\end{equation}
		Since $\frac{k}{2} \mid k$, we have that $R_{k}\subset R_{\frac{k}{2}}$. Also, since $\G(\frac{k}{2},q)$ is undirected, the set $R_{\frac{k}{2}}$ is symmetric and hence $-1 \in R_{\frac{k}{2}}$. Taking into account that the set $R_{\frac{k}{2}}$ is closed under multiplication we obtain that 
		$-R_{k} \subset R_{\frac k2}$ and, therefore,  
		$R_{k}\cup (-R_{k}) \subset R_{\frac{k}{2}}$.
		Finally, since $R_k $ and $-R_k$ are disjoint,  
		and $|R_k|=|-R_k|=\frac{q-1}k$, we have 
		$$|R_{k}\cup (-R_{k})| = 2 \tfrac{q-1}k =  |R_{\frac{k}{2}}|.$$ 
		Hence, we obtain \eqref{eq: Rk Rk2} 
		which, by Theorem \ref{thm: XGSy-S}, directly implies \eqref{eq: G dirs} and \eqref{eq: Spec Gk2q = 2Re Spec Gkq}.
	\end{proof}

	Recall that since $\G(k,q)=\G(\gcd(k,q-1),q)$, 
	the different GP-graphs over $\ff_q$ are parameterized by the divisors $k\mid q-1$. More precisely, 
	$$\G(k,q) = \G(k',q) \quad \Leftrightarrow \quad \gcd(k,q-1) = \gcd(k',q-1).$$ 
	
	The above observation and the previous proposition allow us to give the complete characterization of the nature of the spectrum of GP-graphs $\G(k,q)$ in terms of divisors of $k$ and $q-1$. 
	
	\begin{coro} \label{coro: wkqs exp}
		Let $q=p^m$ with $p$ an odd prime and $m\in \N$ and suppose that 
		$$q-1 = 2^t r$$ 
		with $t\ge 0$ and $r$ odd. Then we have: 
		\begin{enumerate}[$(a)$]
			\item The GP-graph $\G(2^t s, q)$ with $s\mid r$ has complex spectrum (i.e.\@ are directed). \msk 
			
			\item The GP-graph $\G(2^{t'} s, q)$ with $t'<t$ and $s \mid r$ has real spectrum (i.e.\@ are undirected).
		\end{enumerate}
		In particular, if $k$ is odd then $\G(k,q)$ is undirected for any $q$.
		
		Furthermore, if $r=p_1^{e_1} \cdots p_s^{e_s}$ is the prime factorization of $r$, then there are $(t+1)N$ GP-graphs $\G(k,q)$ over $\ff_q$ where 
		\begin{equation} \label{eq: NC}
			N := N_\C(q) := (e_1+1) \cdots (e_s+1),	
		\end{equation}
		out of which $N$ are directed (complex spectrum) and $tN$ are undirected (real spectrum).  
	\end{coro}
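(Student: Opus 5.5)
The plan is to combine Theorem~\ref{thm: nature spec v2} with Theorem~\ref{thm: real spec}, and then count divisors. Since $\G(k,q)=\G(\gcd(k,q-1),q)$ and distinct divisors of $q-1$ give distinct connection sets $R_k$ (because $|R_k|=\frac{q-1}{k}$), the GP-graphs over $\ff_q$ are in bijection with the divisors $k$ of $q-1=2^t r$. Every such divisor can be written uniquely as $k=2^{t'}s$ with $0\le t'\le t$ and $s\mid r$. I will therefore classify each divisor by its $2$-adic part $t'=v_2(k)$.

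For $(a)$, take $k=2^t s$ with $s\mid r$, so that $v_2(k)=t=v_2(q-1)$. Since $q$ is odd, $t\ge 1$, and condition \eqref{eq: v2k} of Theorem~\ref{thm: nature spec v2} holds. Hence $\G(k,q)$ is directed, and by Theorem~\ref{thm: real spec} (see also \eqref{eq: nature2}) its spectrum is complex.

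For $(b)$, take $k=2^{t'}s$ with $t'<t$. Then $v_2(k)<v_2(q-1)$, so by Theorem~\ref{thm: nature spec v2} the graph is undirected, and again by Theorem~\ref{thm: real spec} its spectrum is real. The remark about odd $k$ is the case $t'=0<t$, which uses $t\ge1$ for $q$ odd. For $q$ even every GP-graph is undirected, as recalled in Remark~\ref{rem: nature}. If needed, I would first reduce an arbitrary odd $k$ to $\gcd(k,q-1)$, which is still odd.

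For the count, the number of divisors $s\mid r$ is $N=(e_1+1)\cdots(e_s+1)$ by the usual divisor-counting formula. The exponent $t'$ ranges over $t+1$ values, which gives $(t+1)N$ GP-graphs in total. Exactly those with $t'=t$ are directed, so there are $N$ of them, and the remaining $tN$ are undirected.

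The only point that needs care is the claim that distinct divisors yield distinct graphs, so that the count is really a count of graphs. This follows because the out-degrees $\frac{q-1}{k}$ differ.
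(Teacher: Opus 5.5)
Your proposal is correct and follows essentially the paper's argument: write $k=2^{t'}s$ with $t'\le t$ and $s\mid r$, apply Theorems~\ref{thm: nature spec v2} and~\ref{thm: real spec}, and count the divisors. Your extra checks make explicit what the paper simply calls clear, namely that $t\ge 1$ for odd $q$ and that distinct divisors give distinct graphs because $|R_k|=\frac{q-1}{k}$.
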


	\begin{proof}
		Since $k \mid q-1$ and $q-1 = 2^t r$ with $r$ odd, $k$ is of the form $k=2^{t'}s$ with $t'\le t$ and $s \mid r$.
		The results in ($a$) and ($b$) are hence direct consequences of Theorems \ref{thm: real spec} and \ref{thm: nature spec v2}. 
		The remaining statements are clear.
	\end{proof}
	
	We already know that for $q$ even, $\G(k,q)$ is undirected and $k$ must be odd. The previous result generalizes this to $q$ odd also. On the other hand, if $\G(k,q)$ is directed then $k$ must be even.
	
	\begin{exam} \label{exam: G union dirs} 
		The GP-graphs over $\ff_{3^5}$ are given by the divisors of $3^5-1=242=2\cdot 11^2$. Hence, we have the 6 graphs
		(notice that $t=1$ and $N=3$ in the notation of Theorem~\ref{thm: nature spec v2})
		$$\G(1,3^5), \quad \G(2,3^5), \quad \G(11,3^5), \quad \G(22,3^5), \quad \G(121,3^5), \quad \text{and} \quad \G(242,3^5),$$ 
		out of which those with $k$ even are directed. Also, by \eqref{eq: G dirs} we have 
		$$ \G(k,3^5) = \overset{_{\rightarrow}}{\G}(2k,3^5) \cup \overset{_{\leftarrow}}{\G}(2k,3^5)$$ 
		for $k=1, 11, 121$. 
		\hfill $\diamond$
	\end{exam}

	Notice that, for any $\ff_q$ there are always trivial GP-graphs $\G(k,q)$ having integral, real or complex spectrum. Namely, $\G(1,q)=K_q$, $\G(\frac{q-1}2,q)$ for $q$ odd, which is the disjoint union of $p$-cycles, and $\G(q-1,q)$, which is the disjoint union of oriented $p$-cycles, have integral, real and complex spectra, respectively (see Examples \ref{exam: complete} and  \ref{exam: cycles} below for more details).
	We remark that $\G(2,q)$ can have real or complex spectrum depending on the congruence of $q$ modulo $4$ and in the real case, it is integral for quadratic fields, i.e.\@ when $q=p^{2m}$ (see Example~\ref{exam: paley}). 
	
	Aside from these `trivial' GP-graphs, can we ensure the existence of some other families of GP-graphs with integral,
	real or complex spectrum? 
	This is the topic of the next two sections. In the next section we give some more families with integral/real/complex spectrum and in Section \ref{sec: integral}, we focus on the construction of infinite families of integral GP-graphs.

	\section{Families of GP-graphs with known spectrum} \label{sec: 4}
	Here, we recollect those families of GP-graphs with known spectra and study their nature.  
	We will use some results previously obtained in our works \cite{PV3} and \cite{PV18}. 
	We compare the known spectrum with the results in Theorems \ref{thm: real spec} and \ref{thm: nature spec v2}.
	Here, in the families, either $k$ is fixed and $q$ moves or else both $(k,q)$ move (in this case, either $k$ has some expression in terms of $q$ or $(k,q)$ form a semiprimitive pair).
	
	In all the examples of the section we set $q=p^m$ with $p$ prime and $m \in \N$, $k\mid q-1$ and $n=\frac{q-1}{k}$.
	When possible, we will also give their parameters as strongly regular graphs.
	A \textit{strongly regular graph} with parameters  $v,r,e,d$, denoted 
	$$srg(v,r,e,d),$$
	is an $r$-regular graph of order $v$ such that any pair of adjacent vertices has $e$ neighbors in common and any pair of non-adjacent vertices has $d$ common neighbors. These parameters satisfy the relation
	$$(v-r-1)d= r(r-e-1).$$
	In our case, when $\G(k,q)$ is a strongly regular graph, we will write it as 
	$$ \G(k,q)= srg(q,n,e,d),$$
	with $q=p^m$ and $n=\frac{q-1}k$.
	It is well-known that if $d\ne 0$, the graph is connected with 3 eigenvalues and has diameter 2.

	\subsection{GP-graphs with small $k$ fixed} 
	We first consider the cases of $\G(k,q)$ with small fixed $k$, that is $1\le k\le 4$ for arbitrary $q$.

	We begin with the trivial case of complete graphs.
	\begin{exam}[\textit{The graphs $\G(1,q)$}] \label{exam: complete}
		It is the complete graph 
		$$K_q=srg(q,q-1,q-2,0),$$ 
		which is connected, undirected, strongly regular with 2 different integral eigenvalues; in fact, 
		$$Spec(K_q) = \{[q-1]^1, [-1]^{q-1}\}.$$ 
		Here, the condition for integrality in \eqref{eq: nature} holds trivially. 
		\hfill $\diamond$ 
	\end{exam}

	In the next example we consider the graphs $P_q = \G(2,q)$, which are the classic (undirected) Paley graphs $\mathcal{P}_q$ or the directed Paley graphs $\vec{\mathcal{P}}_q$, depending on the congruence of $q$ modulo 4.
	
	\begin{exam}[\textit{The graphs $\G(2,q)$}] \label{exam: paley}
		We now compute the spectrum of $P_q =\G(2,q)$ by using Weil and Gauss sums. Since $P_q$ is the Cayley graph $Cay(\ff_q, (\ff_q^*)^2)$, by \eqref{eq: chi(S)}, its eigenvalues are given by the sums 
		$$ \lambda = \sum_{x \in (\ff_q^*)^2} \chi(x)$$ 
		where $\chi$ is an additive character of $\ff_q$.
		The characters of $\ff_q$ are $\{\chi_\alpha\}_{\alpha \in \ff_q}$ where for any $x\in \ff_q$ we have 
		$$\chi_\alpha (x) = \zeta_{p}^{\Tr(\alpha x)}$$ 
		with $\zeta_{p}=e^{\frac{2\pi i}{p}}$.
		Thus, for each $\alpha \in \ff_q$ we have the eigenvalue
		$$\lambda_{\alpha} = \sum_{y\in\{z^2: z\in \ff_{q}^*\}} \zeta_{p}^{\Tr(\alpha y)} = 
		\tfrac{1}{2}\sum_{x\in \ff_{q}^*}\zeta_{p}^{\Tr(\alpha x^2)} = 
		\tfrac{1}{2} \Big\{ \sum_{x\in \ff_{q}}\zeta_{p}^{\Tr(\alpha x^2)}-1 \Big\}.$$
		
		Robert Coulter showed (see Theorem 2.5 in \cite{Co}), using explicit values of quadratic Gauss sums (see Chapter 5 in \cite{LN}), that if $q=p^{m}$ with $p$ an odd prime and $\alpha\in \ff_{q}^*$ then
		$$\sum_{x\in \ff_{q}} \zeta_{p}^{\Tr(\alpha x^2)}=
		\begin{cases}
			(-1)^{m-1} \sqrt{q} \, \eta(\alpha) 		  & \qquad  \text{if $p\equiv 1 \pmod{4}$},\\[2mm]
			(-1)^{m-1} i^{m} \sqrt{q} \, \eta(\alpha) & \qquad \text{if $p\equiv 3 \pmod{4}$,}
		\end{cases} $$
		where $\eta$ is the quadratic character of $\ff_{q}$ and $\Tr:=\Tr_{\ff_q/\ff_p}$ is the trace map, and hence	
		$$\lambda_{\alpha} = \begin{cases}
			\tfrac 12 \{ (-1)^{m-1} \sqrt{q} \,\eta(\alpha)-1 \} 	     & \qquad \text{if $p\equiv 1 \pmod{4}$},\\[2mm]
			\tfrac 12 \{ (-1)^{m-1} i^{m} \sqrt{q} \, \eta(\alpha)-1 \}  & \qquad \text{if $p\equiv 3 \pmod{4}$.}
		\end{cases} $$
		
		By taking into account that there are exactly $\frac{q-1}{2}$ elements in $\ff_{q}^*$ such that $\eta(\alpha)=1$ and 
		$\frac{q-1}{2}$ elements in $\ff_{q}^*$ such that $\eta(\alpha)=-1$, we obtain that
		\begin{equation} \label{eq:SpecPq*}
			Spec(P_q) = \begin{cases}
				\{ [\tfrac{p^m-1}2]^{1}, [\tfrac{-1+\sqrt{p^m}}2]^{\frac{p^m-1}2 }, [\tfrac{-1-\sqrt{p^m}}2]^{\frac{p^m-1}2 } \} & \: \text{if $p \equiv 1 \pmod 4$}, \\[2mm]
				\{ [\tfrac{p^m-1}2]^{1}, [\tfrac{-1+i^{m}\sqrt{p^m}}2]^{\frac{p^m-1}2}, [\tfrac{-1-i^{m}\sqrt{p^m}}2]^{\frac{p^m-1}2 } \} & \: \text{if $p \equiv 3 \pmod 4$}. \end{cases}
		\end{equation}

		Since $2\mid q-1$ we have that $q$ is odd and there are two cases: ($a$) $q \equiv 1 \pmod 4$ and ($b$) $q \equiv 3 \pmod 4$.  
		
		\noindent ($a$) If $q\equiv 1 \pmod 4$, that is $p\equiv 1 \pmod 4$ or $p\equiv 3 \pmod 4$ and $m$ even, then $\G(2,q)$ is the undirected Paley graph $\mathcal{P}_q$ with real spectrum (integral if $q$ is a square), a connected strongly regular graph with 3 different eigenvalues. 
		In fact, we have 
		$$\mathcal{P}_q = srg(q, \tfrac{q-1}2, \tfrac{q-5}4, \tfrac{q-1}4)$$ 
		with 
		\begin{equation} \label{spec paley}
			Spec(\mathcal{P}(q)) = \big \{ [\tfrac{q-1}2]^1, [\tfrac{-1+(-1)^m \sqrt{q}}2]^{n}, [\tfrac{-1-(-1)^m\sqrt{q}}2]^{n} \big \}.
		\end{equation}

		\noindent ($b$)	If $q\equiv 3 \pmod 4$,  hence $p\equiv 3 \pmod 4$ and $m$ odd, then $\G(2,q)$ is the directed Paley graph $\vec{\mathcal{P}}_q$.

		It is reassuring to note that the expressions in \eqref{eq:SpecPq*} coincide with \eqref{spec paley} and the ones obtained in \cite{PV3}, 
		with complex non-real spectrum 
		\begin{equation} \label{spec paley dir}
			Spec(\vec{\mathcal{P}}(q)) = \{[\tfrac{q-1}2]^1, [\tfrac{-1+(-1)^m i^m \sqrt{q}}2]^{n}, [\tfrac{-1-(-1)^m i^m \sqrt{q}}2]^{n}\},
		\end{equation}
		where we computed the same spectrum using Gaussian periods.
		
		With respect to Corollary \ref{coro: wkqs exp}, the graph $\G(2,q)$ with $q=p^m$ and $2\mid q-1$ is directed if and only if $q-1=2r$ with $r$ odd, that is, if and only if $q\equiv 3\pmod 4$.
		\hfill $\diamond$
	\end{exam}

	Now, we study the graphs $\G(k,q)$ with $k=3,4$.
	\begin{exam}[\textit{The graphs $\G(3,q)$}] \label{exam: G3} 
		The graph $\G(3,q)$ is connected and undirected, hence with real spectrum. Its spectrum is given in Theorem 3.1 in \cite{PV3} where there are three cases: 
		\begin{enumerate}[$(a)$]
			\item $p\equiv 1\pmod 3$ with $3\mid m$, \sk 
			
			\item $p\equiv 1\pmod 3$ with $3 \nmid m$ and $m$ even, \sk 
			
			\item $p\equiv 2\pmod 3$ with $m$ even.
		\end{enumerate}
		The spectrum is integral in cases ($a$) and ($c$). In case ($c$), the graph is strongly regular with 3 different eigenvalues, while in cases ($a$) and ($b$) it has 4 different eigenvalues. 
		\hfill $\diamond$
	\end{exam}

	\begin{exam}[\textit{The graphs $\G(4,q)$}] \label{exam: G4} 
		The graph $\G(4,q)$ is connected, with the only exception of $\G(4,9)$. 
		Its spectrum is given in Theorem 3.2 in \cite{PV3} where there are five cases: 
		\begin{enumerate}[$(a)$]
			\item $p\equiv 1\pmod 4$ with $m \equiv 0 \pmod 4$, \sk 
			
			\item $p\equiv 1\pmod 4$ with $m \equiv 2 \pmod 4$, \sk 
			
			\item $p\equiv 1\pmod 4$ with $m$ and $n$ odd, \sk 
			
			\item $p\equiv 1\pmod 4$ with $m$ odd and $n$ even, and \sk 
			
			\item $p\equiv 3\pmod 4$ with $m$ even.  
		\end{enumerate}	
		The graph is undirected (hence with real spectrum) in cases ($a$), ($b$), ($d$) and ($e$), and directed (hence with complex spectrum) in case ($c$). 
		The spectrum is integral in cases ($a$) and ($e$). In case ($e$), the graph is strongly regular with 3 different eigenvalues, while in cases ($a$)--($d$) it has 5 different eigenvalues. 
		
		Relative to Corollary \ref{coro: wkqs exp}, the graph $\G(4,q)$ with $q=p^m$ and $4\mid q-1$ is directed if and only if $q-1=4r$ with $r$ odd, that is if and only if $q\equiv 5\pmod 8$, which is equivalent to the conditions $p\equiv 1 \pmod 4$ and $m$ odd given in item ($c$). For example, $\G(4,5^{2m+1})$ and $\G(4,13^{2m+1})$ are directed for every $m$.	
		\hfill $\diamond$ 
	\end{exam}

	\subsection{GP-graphs with $k$ not fixed}
	Here we take $k$ depending on $p$ and/or some other parameter. In particular, we consider GP-graphs which are cycles, Hamming GP-graphs and semiprimitive GP-graphs.

	We begin with GP-graphs which are cycles.
	\begin{exam}[\textit{Cycles}] \label{exam: cycles}
		We now consider the graphs $\G(k,q)$ with $k= \frac{q-1}2, q-1$.
		
		\noindent $(a)$ \textit{The graph $\G(\frac{q-1}2,q)$, $q$ odd}. 
		It is the disjoint union of $p^{m-1}$ copies of the undirected $p$-cycle $C_p$ (see Proposition 3.2 in \cite{PV18}), hence disconnected for all $m>1$, with real non-integral spectrum 
		\begin{equation} \label{eq: Spec Cp}
			Spec(\G(\tfrac{q-1}2,q)) = \{ [2\cos(\tfrac{2\pi j}p)]^{p^{m-1}} \}_{0 \le j \le p-1}.
		\end{equation}
		Notice that $v_2(\tfrac{q-1}2,q) < v_2(q-1)$, in accordance with Theorem \ref{thm: nature spec v2}. \sk

		\noindent $(b)$ \textit{The graph $\G(q-1,q)$, $q$ odd}. 
		It is the disjoint union of $p^{m-1}$ copies of the directed $p$-cycle $\vec{C}_p$ (see Proposition 3.2 in \cite{PV18}), thus disconnected for all $m>1$, with complex spectrum 
		\begin{equation} \label{eq: Spec dir Cp}
			Spec(\G({q-1},q)) = \{ [\zeta_p]^{p^{m-1}} \}_{0 \le j \le p-1},
		\end{equation}
		where $\zeta_p = e^{\frac{2\pi i}p}$ is the primitive $p$-th root of unity. 
		Notice that by \eqref{eq: Spec Cp} and \eqref{eq: Spec dir Cp} we check that expression \eqref{eq: Spec Gk2q = 2Re Spec Gkq} holds in this case, that is 
		$$ Spec(\G({q-1},q)) = 2Re (Spec(\G(\tfrac{q-1}2,q))).$$

		\noindent $(c)$ \textit{The graph $\G(q-1,q)$, $q$ even}. 
		The graphs $\G(2^m-1,2^m)$ with $m\in \N$ are undirected and the disjoint union of $2^{m-1}$ copies of $K_2$, hence disconnected except for $\G(1,2)=K_2$. They are the only bipartite GP-graphs (see Theorem 4.2 in \cite{PV18}), with spectrum given by 
		$$ Spec(\G(2^m-1,2^m)) = \{[1]^{2^{m-1}}, [-1]^{2^{m-1}}\}$$ 
		which is clearly integral.
		\hfill $\diamond$ 
	\end{exam}
	
	We continue with GP-graphs, which are Hamming graphs.
	\begin{exam}[\textit{Hamming GP-graphs}] 
		Connected GP-graphs $\G(k,q)$ which are Hamming graphs $H(b,q)$, were characterized by Lim and Praeger in 2009 (see \cite{LP}). In fact, 	
		\begin{equation} \label{eq: GP Hamming}
			\G(\tfrac{p^{bm}-1}{b(p^m-1)}, p^{bm}) = H(b,p^{m})
		\end{equation}	
		with $b\mid \tfrac{p^{bm}-1}{p^m-1}$.
		These graphs have integral spectrum (this is well-known, see for instance Example~4.3 in \cite{PV3}) given by 
		$$ Spec(H(b,q)) = \{ [\ell q-b]^{\binom{b}{\ell}(q-1)^{b-\ell}} \}_{0\le \ell \le b}.$$
		Clearly, $H(1,q)=K_q$. Taking $b=2$ in \eqref{eq: GP Hamming} we get the lattice (or rook's graph) which is a strongly regular graph 
		$$ \G(\tfrac{q+1}2,q^2) = H(2,q) = L_{q,q} = srg(q^2,2(q-1), q-2,2).$$
		In fact, this is the only Hamming GP-graph which is strongly regular since it is the only one having exactly 3 different eigenvalues (see Section \ref{sec:3}). 
		The graph $H(3,q)$ is uniquely determined by its spectrum when $q\ge 36$ (\cite{BvDK}).
		\hfill $\diamond$	
	\end{exam}
	
	Finally, we give a big and important family of GP-graphs, the semiprimitive ones.
	\begin{exam}[\textit{Semiprimitive GP-graphs}] \label{exam: semip}
		A graph $\G(k, q)$ with $q=p^m$ with $p$ prime and $m\in \N$ is \textit{semiprimitive} if $k=2$ and $q\equiv 1 \pmod 4$, i.e.\@ it is the classic Paley graph $\vec{\mathcal{P}}_q=\G(2,q)$, or else 
		$$\text{$k>2$ \quad with \quad $k \mid p^t+1$ \quad for some \quad $t \mid \tfrac m2$ 
			\quad and \quad $k \ne p^{\frac m2}+1$}$$ 
		(see Definition 5.1 in \cite{PV3}).
		Hence, every semiprimitive GP-graph $\G(k,q)$ is undirected and connected. 
		
		In \cite{PV3}, we have studied the spectrum and some properties of the semiprimitive GP-graphs. 
		For instance, every semiprimitive GP-graph $\G(k,q)$ is a strongly regular graph 
		$srg(q,n,e,d)$ 
		with 3 different integer eigenvalues. Namely, we have that (see Theorem~5.4 in \cite{PV3})
		\begin{equation} \label{eq: Spec semipGP}
			Spec(\G(k,q)) = \{ [n]^1, [\tfrac{\sigma(k-1)\sqrt q-1}k]^n, [-\tfrac{\sigma \sqrt q +1}k]^{(k-1)n} \}
		\end{equation}
		where $n=\frac{q-1}k$, $m=2ts$, $t$ is the least integer $j$ such that $k \mid p^j+1$ and $\sigma= (-1)^{s+1}$.
		
		The parameters $(q,n,e,d)$ are $q=p^m$, with $m=2ts$, $s\in \N$ and $t$ is the minimum positive integer such that $k\mid p^t+1$, $n=\frac{q-1}k$ and where
		\begin{align*}
			& d = n + (\sigma \sqrt q +\lambda)\lambda =  \lambda^2 + \sigma \sqrt q \lambda +n, \\
			& e = d + \sigma \sqrt q + 2\lambda = \lambda^2 +(\sigma \sqrt q +2)\lambda +\sigma\sqrt q +n, 
		\end{align*}
		are quadratic expressions in one of the non-principal eigenvalues (see \eqref{eq: Spec semipGP})
		$$\lambda= -\tfrac{\sigma \sqrt q +1}{k}$$
		(see Theorem 5.8 in \cite{PV3}, where also the parameters of $\G(k,q)$ as a pseudo-Latin square graph and as a distance regular graph are given). 
		
		Summing up, semiprimitive GP-graphs are integral strongly regular graphs.
		\hfill $\diamond$
	\end{exam}
	
	For more details on all the previous examples we refer to Examples 2.3--2.4, Theorems~3.1--3.2, Examples 4.2--4.3, Theorems~5.4 and 5.8 in \cite{PV3}, and Proposition 3.2 and Theorem~4.2 in \cite{PV18}. 
	For instance, the description of the spectra of $\G(3,q)$ and $\G(4,q)$ are more involved and are not written down explicitly in Examples \ref{exam: G3} and \ref{exam: G4}.

	\section{Integral spectrum} \label{sec: integral}
	Now, we study in more detail the integrality problem. 
	We know from \eqref{eq: nature} that $\G(k,q)$ has integral spectrum if and only if 
	\begin{equation*} \label{eq: k|(q-1)/(p-1)}
		k \mid \tfrac{q-1}{p-1}.
	\end{equation*}
	Hence, any GP-graph over a field of characteristic 2 is integral, i.e.\@ $Spec(\G(k, 2^m))~\subset~\Z$, and thus one can assume that $q$ is odd when studying integrality of GP-graphs.
	Furthermore, for $q$ odd, we have that 
	$$\G(\tfrac{q-1}{p-1},q)$$ 
	and all of its GP-supergraphs (that is those $\G(k,q)$ with $k \mid \tfrac{q-1}{p-1}$) have integral spectrum. 
	
	Conversely, they are all the integral GP-graphs over $\ff_q$.
	In fact, putting condition \eqref{eq: nature} in terms of $p$-adic valuations, we readily obtain the following result characterizing integral spectrum arithmetically, which complements Corollary \ref{coro: wkqs exp}.
	
	\begin{coro} \label{coro: integral GPs}
		Let $q=p^m$ with $p$ a prime and $m\in \N$, let  
		$\tfrac{q-1}{p-1} = p_1^{f_1} \cdots p_s^{f_s}$
		be the prime factorization of $\frac{q-1}{p-1}$ and let $k\mid q-1$. 
		Then, 
		\begin{equation}  \label{eq: condition Gkq integral}
			Spec(\G(k, q)) \subset \Z \qquad \Leftrightarrow \qquad 
			v_{p_i}(k) \le f_i  \quad (1\le i \le s).
		\end{equation} 
		Moreover, there are 
		\begin{equation} \label{eq: NZ}
			N_\Z	(q) := (f_1+1) \cdots (f_s+1)
		\end{equation}
		integral GP-graphs over $\ff_q$.	
	\end{coro}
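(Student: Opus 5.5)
The plan is to translate the integrality criterion of \eqref{eq: nature} (proved in \cite{PV3}), namely $Spec(\G(k,q))\subset\Z$ if and only if $k \mid \frac{q-1}{p-1}$, into a statement about prime valuations, and then to count divisors. For $p=2$ we have $\frac{q-1}{p-1}=q-1$, so every $k\mid q-1$ satisfies the condition. This agrees with \eqref{eq: 2^m integral}. The right-hand side of \eqref{eq: condition Gkq integral} is then automatic, because $k \mid q-1=\prod p_i^{f_i}$ gives $v_{p_i}(k)\le f_i$. Hence we may assume $p$ is odd, although the argument below does not really need this.

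For the equivalence, first suppose $k\mid \frac{q-1}{p-1}$. Then $v_{p_i}(k)\le v_{p_i}(\frac{q-1}{p-1})=f_i$ for every $i$, directly from the definition of valuations. Conversely, suppose $v_{p_i}(k)\le f_i$ for all $i$. Since $k\mid q-1$, the point to check is that every prime divisor $\ell$ of $k$ is among the $p_i$, or else that $v_\ell(k)\le v_\ell(\frac{q-1}{p-1})=0$ is somehow forced. This is the only delicate step: a prime $\ell$ dividing $p-1$ but not $\frac{q-1}{p-1}$ could divide $k$. So I would read the condition in \eqref{eq: condition Gkq integral} as ranging over all primes, that is, $v_\ell(k)\le v_\ell(\frac{q-1}{p-1})$ for every prime $\ell$. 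Equivalently, the hypothesis includes $\gcd(k, \frac{q-1}{p-1})$-type control: $k$ has no prime factors outside $\{p_1,\dots,p_s\}$. With this reading, $k=\prod_i p_i^{v_{p_i}(k)}$ divides $\prod_i p_i^{f_i}=\frac{q-1}{p-1}$, and \eqref{eq: nature} gives integrality. I expect this interpretive point to be the main obstacle. I would state explicitly in the proof that the valuation condition is meant as $v_\ell(k)\le v_\ell(\frac{q-1}{p-1})$ for all primes $\ell$, with $v_\ell=0$ for primes not among the $p_i$.

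For the count, recall from the discussion after Theorem~\ref{thm: nature spec v2} that distinct GP-graphs over $\ff_q$ correspond bijectively to divisors $k\mid q-1$. By the equivalence just proved, the integral ones are exactly the divisors of $\frac{q-1}{p-1}$, and each such divisor automatically divides $q-1$. A divisor is determined by choosing exponents $0\le a_i\le f_i$, so there are $\prod_i (f_i+1)=N_\Z(q)$ of them. This proves \eqref{eq: NZ}.
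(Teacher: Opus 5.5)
Your proof is correct and follows the paper's one-line argument: rewrite the criterion $k\mid\frac{q-1}{p-1}$ from \eqref{eq: nature} in terms of valuations, then count the divisors of $\frac{q-1}{p-1}$, using that distinct $k\mid q-1$ give distinct GP-graphs. Your interpretive caveat is also right and necessary: read literally, the equivalence fails, e.g.\ for $q=27$ and $k=2$ one has $\frac{q-1}{p-1}=13$ and $v_{13}(2)=0\le 1$, yet $\G(2,27)=\vec{\mathcal{P}}_{27}$ is directed. So the condition must be taken as $v_\ell(k)\le v_\ell(\frac{q-1}{p-1})$ for every prime $\ell$, which is exactly what the count $N_\Z(q)$ presupposes.
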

	
	\begin{proof}
		Immediately from the above comments, condition (\ref{eq: nature}) and Theorem \ref{thm: nature spec v2}. 
	\end{proof}

	\begin{rem} 
		For any fixed $q$, we let $\G(q)$ be the set of GP-graphs $\G(k,q)$ over $\ff_q$ and we denote by $\G_\C(q)$, $\G_\R(q)$, 
		$\G_{\R \smallsetminus \Z}(q)$ and $\G_\Z(q)$ the set of GP-graphs defined over $\ff_q$ which has complex (non-real) spectrum, real spectrum, real non-integral spectrum and integral spectrum, respectively.   
		
		The number of these sets is given by 
		$$ \#\G(q) = \sigma(q-1),$$ 
		where $\sigma(n)$ is the number of divisors of $n$ and 
		\begin{equation} \label{eq: NUmber of Gkqs}
			\begin{aligned}
				&	\#\G_\C(q) = N_\C(q), \\[1mm]
				&	\#\G_\R(q) = v_2(q-1) N_\C(q), \\[1mm]
				&	\#\G_\Z(q) = N_\Z(q), 
			\end{aligned}
		\end{equation}
		where $N_\C(q)$ and $N_\Z(q)$ are the numbers given in \eqref{eq: NC} and \eqref{eq: NZ}, respectively. From this, we clearly obtain
		\begin{equation} \label{eq: Gkq R-Z}
			\G_{\R \smallsetminus \Z}(q) = \sigma(q-1) - (N_\C(q) + N_\Z(q)) = v_2(q-1) N_\C(q) - N_\Z(q).    
		\end{equation}
	\end{rem}

	In what follows we will give sufficient conditions to get families of integral GP-graphs.

	\subsection{Basic arithmetic criteria} 
	We begin by giving some basic arithmetic criteria that ensure the integrality of the spectrum of a GP-graph.
	
	\begin{lem} \label{prop: criteria}
		Consider $\G(k,q)$ with $q=p^m$, $p$ prime, $k\mid q-1$ and $m\in\N$.
		\begin{enumerate}[$(a)$]
			\item If $\gcd(k,p-1)=1$, then $Spec(\G(k,q))\subset \Z$. \msk 
			
			\item If $p\equiv 1 \pmod k$ and $k\mid m$ then $Spec(\G(k,q))\subset \Z$. \msk
			
			\item If $p\equiv -1 \pmod k$ and $m$ is even then $Spec(\G(k,q))\subset \Z$. 
			
		\end{enumerate} 	
	\end{lem}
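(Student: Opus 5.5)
The plan is to reduce all three items to the integrality criterion \eqref{eq: nature} (from \cite{PV3}): for $q$ odd and $k\mid q-1$, $Spec(\G(k,q))\subset\Z$ if and only if $k\mid \frac{q-1}{p-1}$. For $q$ even every GP-graph is integral by \eqref{eq: 2^m integral}, so we may assume $p$ odd throughout. The common tool is the identity
$$\tfrac{q-1}{p-1} = 1+p+p^2+\cdots+p^{m-1},$$
and in each case we show directly that $k$ divides this sum, or that $k$ divides $q-1$ while sharing no factor with $p-1$.

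For $(a)$, note that $k\mid q-1=(p-1)\cdot\frac{q-1}{p-1}$. Since $\gcd(k,p-1)=1$, Euclid's lemma gives $k\mid \frac{q-1}{p-1}$, and \eqref{eq: nature} yields integrality. For $(b)$, if $p\equiv 1 \pmod k$ then each power $p^j\equiv 1\pmod k$. Hence $\frac{q-1}{p-1}\equiv m \pmod k$, and $k\mid m$ gives $k\mid\frac{q-1}{p-1}$. For $(c)$, if $p\equiv -1\pmod k$ then $p^j\equiv(-1)^j\pmod k$. With $m$ even, the sum $\sum_{j=0}^{m-1}(-1)^j$ pairs off to $0$, so again $k\mid\frac{q-1}{p-1}$.

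The only point needing care is that \eqref{eq: nature} is stated under the standing hypothesis $k\mid q-1$. This is assumed in the lemma, and in $(b)$ and $(c)$ it also follows automatically from $k\mid\frac{q-1}{p-1}$. There is no real obstacle here. The mild subtlety is the degenerate case $k=1$ or $k=2$ in $(c)$ with $p$ odd: for $k=2$, $p\equiv -1\pmod 2$ holds trivially, and $m$ even gives $q\equiv1\pmod 4$, a square order. This is consistent, since $\frac{q-1}{p-1}$ is a sum of an even number of odd terms and hence even.
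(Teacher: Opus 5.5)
Your proof is correct and is essentially the paper's argument: reduce to the criterion $k \mid \frac{q-1}{p-1}$ from \eqref{eq: nature}, obtain (a) from $k \mid (p-1)\cdot\frac{q-1}{p-1}$ together with $\gcd(k,p-1)=1$, and obtain (b) and (c) by reducing $\frac{q-1}{p-1}=1+p+\cdots+p^{m-1}$ modulo $k$. Your use of coprimality via Euclid's lemma in (a) is the precise justification, which the paper phrases more loosely as ``$k\nmid p-1$''. Your separate treatment of even $q$ is a harmless extra.
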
	
	
	\begin{proof}
		($a$) If $\gcd(k,p-1)=1$, since $q-1 = \frac{q-1}{p-1} (p-1)$ and $k\mid q-1$ but $k\nmid p-1$, then $k\mid \frac{q-1}{p-1}$, and this implies the integrality of the spectrum.
		
		\noindent ($b$) 
		First note that 
		$$\tfrac{q-1}{p-1} = \tfrac{p^m-1}{p-1} = p^{m-1} + \cdots + p^2+p+1.$$ 
		If $p\equiv 1 \pmod k$ and $k\mid m$ we have $\frac{q-1}{p-1} \equiv m \equiv 0 \pmod k$, and hence $k\mid \frac{q-1}{p-1}$, which implies the integrality of the spectrum of $\G(k,q)$.
		
		\noindent ($c$) 
		Similarly, we have $\frac{q-1}{p-1} \equiv 1+(-1) + \cdots +1+(-1) \equiv 0 \pmod k$ if $m$ is even and again we have $k\mid \frac{q-1}{p-1}$, hence the spectrum of $\G(k,q)$ is integral.
	\end{proof}

	\subsubsection*{Infinite families of GP-graphs}	
	Next, in a series of propositions and corollaries, we will give infinite families of integral GP-graphs.
	We begin with the following one which is immediate from items ($b$) and ($c$) of Lemma \ref{prop: criteria}. 
	
	\begin{prop} \label{prop: finite families of integral GPs}
		Let $p$ be a prime. We have the families of integral GP-graphs:
		\begin{enumerate}[$(a)$]
			\item $\{\G(k, p^{kt})\}_{t\in \mathbb{N}}$, for any $k\in \mathbb{N}$ with $k\mid p-1$. \msk
			
			\item $\{\G(k, p^{2t})\}_{t\in \mathbb{N}}$, for any $k\in \mathbb{N}$ with $k\mid p+1$. 
		\end{enumerate} 	
	\end{prop}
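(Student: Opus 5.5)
The plan is to apply Lemma~\ref{prop: criteria} directly to each member of the two families. The only things to check are that each graph is a well-defined GP-graph, i.e.\ $k \mid q-1$, and that the hypotheses of items ($b$) and ($c$) of the lemma hold.

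For ($a$), fix $k \mid p-1$ and $t \in \N$, and put $q = p^{kt}$, so $m = kt$. Since $k \mid p-1$ and $p-1 \mid p^{kt}-1$, we get $k \mid q-1$, so $\G(k,p^{kt})$ is a GP-graph with the standing assumption. Moreover $k \mid p-1$ means $p \equiv 1 \pmod k$, and clearly $k \mid m = kt$. Lemma~\ref{prop: criteria}($b$) then gives $Spec(\G(k,p^{kt})) \subset \Z$. Equivalently, one can check directly that $\tfrac{q-1}{p-1} = \sum_{j=0}^{kt-1} p^j \equiv kt \equiv 0 \pmod k$ and invoke \eqref{eq: nature}.

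For ($b$), fix $k \mid p+1$ and put $q = p^{2t}$, so $m = 2t$ is even. Since $p+1 \mid p^2-1 \mid p^{2t}-1$, again $k \mid q-1$. The condition $k \mid p+1$ says $p \equiv -1 \pmod k$, so Lemma~\ref{prop: criteria}($c$) applies and yields integrality.

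Nothing here is a real obstacle. The only subtlety is the degenerate case $k=1$, where $\G(1,q)=K_q$ is trivially integral, and the case $p=2$ in ($a$), which forces $k=1$. One should also note that the integrality criterion \eqref{eq: nature} was stated for odd $q$; for $q$ even every GP-graph is integral by \eqref{eq: 2^m integral}, so those cases are covered as well.
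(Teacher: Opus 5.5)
Your proposal is correct and takes the same approach as the paper, which says this proposition is immediate from items ($b$) and ($c$) of Lemma~\ref{prop: criteria}. You apply exactly those items to parts ($a$) and ($b$); your extra check that $k \mid q-1$ and your remarks on the degenerate cases $k=1$ and $p=2$ are harmless additions.
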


	Notice that $\G(k, p^{2t})$ is a particular case of a semiprimitive GP-graph when $t>2$.
	Also, for $k=2$, both items ensure that $\G(2,p^{2t})$ is an integral GP-graph. This is known, being Paley graphs over quadratic fields, and thus the corollary generalizes this fact.
	
	Here we illustrate the previous proposition for the first primes.
	\begin{exam} 
		We consider the cases $p=3,5,7,11$. We will use Proposition \ref{prop: finite families of integral GPs} and we exclude the case $k=1$ because it is trivial.
		
		\noindent $(a)$
		If $p=3$, we have the families  
		$$ \{ \G(2,3^{2t})\}_{t \in \N} \qquad \text{and} \qquad \{\G(4,3^{2t})\}_{t \in \N}$$ 
		of integral GP-graphs. In particular, $\G(2,9)$, $\G(2,81)$ and $\G(4,9)$, $\G(4,81)$ are integral. 
		
		\noindent $(b)$
		If $p=5$, by looking at the divisors of $4$ and $6$ we have the families 
		$$ \{ \G(2,5^{2t})\}_{t \in \N}, \quad \{\G(4,5^{4t})\}_{t \in \N}, \quad \text{and} \quad \{\G(3,5^{2t})\}_{t \in \N},  \quad \{\G(6,5^{2t})\}_{t \in \N},$$ 
		of integral GP-graphs. 
		In particular, $\G(2,25)$, $\G(3,25)$, $\G(6,25)$ and $\G(2,625)$, $\G(3,625)$, $\G(4,625)$, $\G(6,625)$ are integral. 
		
		\noindent $(c)$
		If $p=7$, by looking at the divisors of $6$ and $8$ we have the families  
		\begin{gather*}
			\{\G(2,7^{2t})\}_{t \in \N}, \quad \{\G(3,7^{3t})\}_{t \in \N}, \quad \{\G(6,7^{6t})\}_{t \in \N}, \\ 
			\{\G(4,7^{2t})\}_{t \in \N}, \quad \{\G(8,7^{2t})\}_{t \in \N},	
		\end{gather*}
		of integral GP-graphs. 
		In particular, $\G(2,49)$, $\G(4,49)$, $\G(8,49)$ and $\G(3, 343)$ are integral. 
		Also, $\G(2,7^6)$, $\G(3,7^6)$, $\G(4,7^6)$, $\G(6,7^6)$ and $\G(8,7^6)$ are integral.

		\noindent $(d)$
		If $p=11$, by looking at the divisors of $10$ and $12$ we have the families  
		\begin{gather*}
			\{\G(2,11^{2t})\}_{t \in \N}, \quad \{\G(5,11^{5t})\}_{t \in \N}, \quad \{\G(10,11^{10t})\}_{t \in \N}, \\ 
			\{\G(3,11^{2t})\}_{t \in \N}, \quad \{\G(4,11^{2t})\}_{t \in \N}, \quad \{ \G(6,11^{2t})\}_{t \in \N}, \quad \{\G(12,11^{2t})\}_{t \in \N},
		\end{gather*}
		of integral GP-graphs. 
		In particular, $\G(k,11^2)$ for $k=2,3,4,6,12$ are integral. 
		Also, $\G(5,11^5)$ and $\G(k,11^{10})$ with $k=2,3,4,5,6,10,12$ are integral.  
		\hfill $\diamond$
	\end{exam}

	As a direct application of Proposition \ref{prop: finite families of integral GPs}, a more general family of examples is given in the next statement, for prime numbers $p$ of the form $r^\ell \pm 1$, with $r$ another prime.
	\begin{coro} \label{coro: cadenitas}
		Let $p$ be a prime. We have the following infinite families of integral GP-graphs:
		\begin{enumerate}[$(a)$]
			\item 	$\{\G(r,p^{rt}), \G(r^2,p^{r^2t}), \ldots, \G(r^\ell,p^{r^\ell t}) \}_{t\in \N}$, if $p=r^\ell +1$ with $r$ prime and $\ell \in \N$. \msk
			
			\item 	$\{\G(r,p^{2t}), \G(r^2,p^{2t}), \ldots, \G(r^\ell,p^{2t}) \}_{t\in \N}$, if $p=r^\ell -1$ with $r$ prime and $\ell \in \N$.
			
		\end{enumerate}
		
	\end{coro}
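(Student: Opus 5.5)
The corollary follows by applying Proposition~\ref{prop: finite families of integral GPs} once for each power $r^j$ with $1\le j\le \ell$. The whole task is to check the divisibility hypothesis of that proposition for each such $k=r^j$.

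For part ($a$), suppose $p=r^\ell+1$ with $r$ prime. Then $p-1=r^\ell$, so every $r^j$ with $1\le j\le \ell$ divides $p-1$. Applying Proposition~\ref{prop: finite families of integral GPs}($a$) with $k=r^j$ shows that $\{\G(r^j,p^{r^j t})\}_{t\in\N}$ is a family of integral GP-graphs for each such $j$. Collecting over $j=1,\ldots,\ell$ gives the stated family.

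We should also confirm that these graphs are well defined, that is, $r^j\mid p^{r^jt}-1$. This is automatic, since $r^j\mid p-1$ and $p-1\mid p^{r^jt}-1$. Alternatively, one can verify integrality directly through Lemma~\ref{prop: criteria}($b$): $p\equiv 1\pmod{r^j}$ and $r^j\mid r^jt$, so $r^j \mid \frac{q-1}{p-1}$.

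For part ($b$), suppose $p=r^\ell-1$. Then $p+1=r^\ell$, so $r^j\mid p+1$ for each $1\le j\le \ell$. Proposition~\ref{prop: finite families of integral GPs}($b$), or equivalently Lemma~\ref{prop: criteria}($c$) with $m=2t$ even, then shows that $\{\G(r^j,p^{2t})\}_{t\in\N}$ is integral. Well-definedness holds because $r^j\mid p+1\mid p^{2t}-1$.

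There is no real obstacle. The only point needing care is that $k\mid q-1$ holds in each case, so that the graphs are genuine GP-graphs with $k$ as parameter rather than being reduced to $\gcd(k,q-1)$. This was checked above. A side remark on ($b$): if $p$ is odd then $r^\ell=p+1$ is even, which forces $r=2$. That case, together with $p=2$, $r=3$, $\ell=1$, is the only possibility, but the argument does not depend on this.
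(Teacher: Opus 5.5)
Your proof is correct and takes the same approach as the paper, which presents this corollary as a direct application of Proposition~\ref{prop: finite families of integral GPs} with $k=r^j$. That application uses $r^j\mid p-1=r^\ell$ in $(a)$ and $r^j\mid p+1=r^\ell$ in $(b)$; your extra checks that $k\mid q-1$ and your side remark on which primes can occur are also correct.
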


	In the previous proposition, for any given prime number we show that we can obtain a finite number of families of integral GP-graphs. Now, we will be able to show the existence of an infinite number of families of integral GP-graphs obtained from a single prime.
	
	\begin{prop} \label{prop: families of integral GPs}
		For any prime $p$, we have the infinite family of integral GP-graphs
		$$ \{\G(k,p^{\varphi(k)t})\}_{t\in \mathbb{N}}, $$ 
		for any $k\in \mathbb{N}$ odd with $\gcd(k,p(p-1))=1$, where $\varphi$ is the Euler totient function. \msk 
	\end{prop}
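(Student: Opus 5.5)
We need to show that $\G(k,q)$ with $q=p^{\varphi(k)t}$ is a well-defined GP-graph, i.e.\ $k \mid q-1$, and that its spectrum is integral. By Lemma \ref{prop: criteria}($a$) the latter follows from $\gcd(k,p-1)=1$, so the real content is the divisibility $k\mid q-1$.

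The first step is to verify $k \mid p^{\varphi(k)t}-1$. Since $\gcd(k,p(p-1))=1$, in particular $\gcd(k,p)=1$, so $p$ is a unit modulo $k$. Euler's theorem then gives $p^{\varphi(k)}\equiv 1 \pmod k$, and raising to the $t$-th power gives $p^{\varphi(k)t}\equiv 1\pmod k$. Hence $k\mid q-1$ with $q=p^{\varphi(k)t}$, for every $t\in\N$, and $\G(k,q)$ is a genuine GP-graph with $k\mid q-1$.

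For integrality, we use $\gcd(k,p-1)=1$, which again comes from $\gcd(k,p(p-1))=1$. Lemma \ref{prop: criteria}($a$) then gives $Spec(\G(k,q))\subset\Z$. Equivalently, $k\mid q-1=\frac{q-1}{p-1}(p-1)$ together with coprimality to $p-1$ forces $k\mid \frac{q-1}{p-1}$, which is the integrality criterion \eqref{eq: nature}.

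The oddness of $k$ is automatic when $p$ is odd, since then $2\mid p-1$. When $p=2$ it is needed so that $k$ can divide $2^{\varphi(k)t}-1$, and in that case integrality also holds trivially by \eqref{eq: 2^m integral}.

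There is no real obstacle here. The only point needing care is that the hypothesis must supply both coprimalities: with $p$, so that Euler's theorem applies, and with $p-1$, so that the integrality criterion applies. The condition $\gcd(k,p(p-1))=1$ provides exactly these two.
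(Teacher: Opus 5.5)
Your proof is correct and matches the paper's argument: Euler's theorem (via $\gcd(k,p)=1$) gives $k \mid p^{\varphi(k)t}-1$, and Lemma \ref{prop: criteria}($a$) (via $\gcd(k,p-1)=1$) gives integrality. One small correction to your remark on oddness: it is automatic in every case, including $p=2$, because $p(p-1)$ is always even and so $\gcd(k,p(p-1))=1$ already forces $k$ odd; it is never an independent hypothesis.
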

	
	\begin{proof}
		The condition $\gcd(k,p(p-1))=1$ is equivalent to $\gcd(k,p-1)=\gcd(k,p)=1$. In particular, $k$ must be odd. Thus, by the Euler-Fermat theorem we have that
		$$ p^{\varphi(k)t}\equiv 1\pmod{k} $$
		for every $t \in \N$. 
		Hence, $k\mid p^{\varphi(k)t}-1$ and the statement follows directly from item ($a$) of Lemma \ref{prop: criteria}.
	\end{proof}	
	
	The following is a direct consequence of the proposition.
	
	\begin{coro} {\label{coro: mas GP enteros}}
		Let $p$ be a prime. For any $\ell \in \N$ and any set of primes $r_1,\ldots,r_\ell$ with $r_i>p$ for $i=1,\ldots,\ell$ we have the infinite family of integral GP-graphs
		$$ \big\{ \G \big( r_1^{e_1} \cdots r_\ell^{e_\ell}, p^{t \{r_1^{e_1-1}(r_1-1) \cdots r_\ell^{e_\ell-1}(r_\ell-1) \} } \big)  \big\}_{t\in \N, (e_1, \ldots, e_\ell) \in \N^\ell} .$$
	\end{coro}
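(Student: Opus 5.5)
The plan is to obtain the corollary by applying Proposition \ref{prop: families of integral GPs} to the integer
$$ k = r_1^{e_1} \cdots r_\ell^{e_\ell}, $$
with $r_1,\ldots,r_\ell$ distinct primes, $r_i>p$ for every $i$, and $(e_1,\ldots,e_\ell)\in\N^\ell$. The exponent in the statement is then exactly $t\varphi(k)$. Indeed, since the $r_i$ are distinct primes, $\varphi$ is multiplicative on coprime prime powers, which gives
$$ \varphi(k)=\prod_{i=1}^{\ell} r_i^{e_i-1}(r_i-1). $$
The family in the statement is therefore $\{\G(k,p^{\varphi(k)t})\}_{t\in\N}$. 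Taking the union over all $(e_1,\ldots,e_\ell)$ gives the displayed family.

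Next I will check the hypothesis $\gcd(k,p(p-1))=1$ of Proposition \ref{prop: families of integral GPs}. It suffices to show that no $r_i$ divides $p(p-1)$.
\begin{itemize}
\item Since $r_i>p$ and $r_i$ is prime, $r_i\neq p$, so $r_i\nmid p$.
\item Since $0<p-1<r_i$, we also have $r_i\nmid p-1$.
\end{itemize}
Hence $\gcd(k,p(p-1))=1$. In particular $k$ is odd: every $r_i>p\ge 2$, so no $r_i$ equals $2$. Both requirements of the proposition therefore hold.

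The proposition then gives $k\mid p^{\varphi(k)t}-1$ and $Spec(\G(k,p^{\varphi(k)t}))\subset\Z$ for every $t\in\N$. This is the claim.

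There is no real obstacle. The only point needing care is the tacit assumption that the $r_i$ are pairwise distinct, which is what makes the formula for $\varphi(k)$ valid. If some primes are repeated, I would first merge them into a single prime power; after that the same argument applies unchanged.
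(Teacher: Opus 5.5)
Your proposal is correct and follows the paper's proof exactly. You set $k=r_1^{e_1}\cdots r_\ell^{e_\ell}$, observe that primes $r_i>p$ divide neither $p$ nor $p-1$, apply Proposition~\ref{prop: families of integral GPs}, and identify the exponent with $t\varphi(k)$ via the multiplicativity of $\varphi$. One caveat on your closing remark: distinctness is already guaranteed by the phrase ``set of primes''. Moreover, if primes were repeated, merging them changes $\varphi(k)$, so it no longer matches the displayed exponent and the argument would not apply unchanged; that case lies outside the statement anyway.
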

	
	\begin{proof}
		If $r_1,\ldots, r_n$ are primes bigger than $p$, then $k= r_1^{e_1} \cdots r_\ell^{e_\ell}$ is coprime with both $p$ and $p-1$ for every $(e_1, \ldots, e_\ell) \in \N^\ell$. 
		Thus, by Proposition \ref{prop: families of integral GPs} we have that $\G(k,p^{\varphi(k)t})$ is integral for every $t$. 
		The final result is obtained using that $\varphi$ is a multiplicative function and that $\varphi(r^e)=r^{e-1}(r-1)$ for every prime $r$.  
	\end{proof}

	\subsubsection*{Towers of integral GP-graphs}
	Now, we show that once we have an integral GP-graph defined over a finite field $\ff_q$, we automatically have an infinite sequence of integral GP-graphs defined over all the finite fields $\ff_{q^a}$ with $a\in \N$.
	
	\begin{prop} \label{prop: towers of integral GPs}
		Consider $\G(k,q)$ with $q=p^m$, $p$ prime, $k\mid q-1$ and $m\in\N$.
		If $Spec(\G(k,q))\subset \Z$, then 
		$Spec(\G(k\frac{q^{a}-1}{q-1},q^{a}))\subset \Z$ for any $a\in \mathbb{N}$.
	\end{prop}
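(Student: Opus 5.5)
The plan is to reduce everything to the arithmetic integrality criterion \eqref{eq: nature} (and its even-characteristic counterpart \eqref{eq: 2^m integral}): for $k\mid q-1$, the graph $\G(k,q)$ is integral if and only if $k \mid \frac{q-1}{p-1}$. Put $Q=q^a=p^{ma}$ and $K = k\frac{q^a-1}{q-1}$.

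The first step is to check that $\G(K,Q)$ is a legitimate GP-graph, i.e.\@ $K\mid Q-1$. Write
$$Q-1 = q^a-1 = (q-1)\tfrac{q^a-1}{q-1}.$$
Since $k\mid q-1$, it follows that $K=k\frac{q^a-1}{q-1}$ divides $(q-1)\frac{q^a-1}{q-1}=Q-1$.

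The second step is to verify the integrality criterion for $\G(K,Q)$, namely $K \mid \frac{Q-1}{p-1}$. The key identity is
$$\tfrac{Q-1}{p-1} = \tfrac{q^a-1}{q-1}\cdot \tfrac{q-1}{p-1}.$$
By hypothesis $Spec(\G(k,q))\subset\Z$, so \eqref{eq: nature} gives $k\mid\frac{q-1}{p-1}$. Multiplying both sides of this divisibility by the integer $\frac{q^a-1}{q-1}$ yields $K \mid \frac{Q-1}{p-1}$, and applying \eqref{eq: nature} again over $\ff_{Q}$ shows that $\G(K,Q)$ is integral.

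The case $p=2$ needs a separate remark, since there every GP-graph is integral by \eqref{eq: 2^m integral}. Once $K\mid Q-1$ is established, the conclusion is immediate in characteristic $2$.

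There is no real obstacle in this argument. The only point requiring care is that the equivalence in \eqref{eq: nature} is stated for $q$ odd with $k\mid q-1$. The hypothesis $K\mid Q-1$ is therefore needed before the criterion can be invoked, which is exactly why the first step comes first.
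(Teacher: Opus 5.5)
Your proof is correct, but it takes a different route from the paper's.

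The paper argues structurally. Since $\frac{q^a-1}{k\frac{q^a-1}{q-1}}=\frac{q-1}{k}=n$, the graphs $\G(k\frac{q^a-1}{q-1},q^a)$ and $\G(k,q)$ have the same regularity degree. By the connected-components theorem (Theorem 2.1 of \cite{PV18}), both therefore split into copies of the same GP-graph over $\ff_{p^b}$, where $b=ord_n(p)$. Counting copies shows that $\G(k\frac{q^a-1}{q-1},q^a)$ is a disjoint union of $q^{a-1}$ copies of $\G(k,q)$, so the two graphs have exactly the same set of eigenvalues.

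You instead reduce everything to the arithmetic criterion $k\mid\frac{q-1}{p-1}$ together with the factorization $\frac{q^a-1}{p-1}=\frac{q^a-1}{q-1}\cdot\frac{q-1}{p-1}$.

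What each route buys:
\begin{itemize}
\item The paper's argument never uses the integrality criterion. It gives more: the whole spectrum, as a set, is preserved along the tower. So reality, non-reality and the number of distinct eigenvalues all transfer, not just integrality.
\item Your argument is a self-contained divisibility check once \eqref{eq: nature} is available, and it needs no structural decomposition.
\item Like the paper's, your argument is reversible: cancelling the common factor $\frac{q^a-1}{q-1}$ shows the larger graph is integral if and only if $\G(k,q)$ is.
\end{itemize}

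A minor point: your separate treatment of $p=2$ is unnecessary. For $p=2$ the condition $k\mid\frac{q-1}{p-1}=q-1$ holds automatically, which is precisely why binary GP-graphs are integral, so the criterion applies uniformly in all characteristics.
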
	
	
	\begin{proof}
		This follows directly from Theorem 2.1 in \cite{PV18}, since in this case we have that  $\G(k(\frac{q^{a}-1}{q-1}),q^{a})$ is a disjoint union of $q^{a-1}$-copies of the GP-graph $\G(k,q)$.
	\end{proof}
	
	Now, by using the previous result, we give an infinite family of integral GP-graphs.
	\begin{exam} \label{exam: integral families2}
		For any prime number $p$, 
		$$ \{ \G(\tfrac{p^{2t}-1}{p-1}, p^{2t})\}_{t\in \mathbb{N}}$$ 
		is an infinite family of integral GP-graphs, by the previous proposition, since item $(iii)$ of Example 5.4, the graph $\G(p+1,p^{2})$ is integral and $(p+1)\frac{p^{2t}-1}{p^{2}-1}=\frac{p^{2t}-1}{p-1}$.
		\hfill $\diamond$		
	\end{exam}	
	
	Notice that the previous proposition can be applied to any statement or example of this section to produce more general examples of integral GP-graphs.

	\subsection{Cyclotomic polynomials}
	Let $\mathcal{U}_n = \{w\in \C : w^n=1\}$ be the group of complex $n$-th roots of unity and let $\mathcal{U}_n^* = \{ \omega \in \mathcal{U}_n : \textrm{ord}(\omega)=n\}$ be the subgroup of primitive $n$-th roots of unity.
	The $n$-th \textit{cyclotomic polynomial} is defined by  
	$$\Phi_n(x) = \prod_{\omega \in \mathcal{U}_n^*} (x-\omega) = \prod_{ 0 < d< n, \, (d,n)=1 } \big(x-e^{\frac{2\pi i d}{n}} \big).$$ 
	
	Next, we give a criterion using cyclotomic polynomials for the construction of integral GP-graphs.
	
	\begin{lem} \label{lem: criteria2}
		Consider the GP-graph $\G(k,q)$ with $q=p^m$, $p$ prime, $k\mid q-1$ and $m\in\N$.
		If $k \mid \Phi_d(p)$ for some $d \mid m$ with $d>1$, then $Spec(\G(k,q))\subset \Z$.
	\end{lem}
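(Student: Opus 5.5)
The plan is to reduce everything to the integrality criterion \eqref{eq: nature}: $Spec(\G(k,q)) \subset \Z$ if and only if $k \mid \frac{q-1}{p-1}$ (for $q$ even this is automatic by \eqref{eq: 2^m integral}). So it suffices to show that $\Phi_d(p)$ divides $\frac{p^m-1}{p-1}$ whenever $d \mid m$ and $d>1$. The conclusion then follows by transitivity, since $k \mid \Phi_d(p)$.

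The key identity is the factorization $x^m - 1 = \prod_{e \mid m} \Phi_e(x)$ in $\Z[x]$. Because $\Phi_1(x) = x-1$, we get
\[
\frac{x^m-1}{x-1} = \prod_{e \mid m,\, e>1} \Phi_e(x),
\]
which is an identity of polynomials with integer coefficients. Evaluating at $x=p$ gives $\frac{p^m-1}{p-1} = \prod_{e\mid m, e>1} \Phi_e(p)$, a product of integers. Since $d \mid m$ and $d>1$, the factor $\Phi_d(p)$ occurs in this product, and hence $\Phi_d(p) \mid \frac{q-1}{p-1}$.

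Combining the two steps, $k \mid \Phi_d(p) \mid \frac{q-1}{p-1}$, so \eqref{eq: nature} gives the integrality of $Spec(\G(k,q))$. The hypothesis $k \mid q-1$ is only needed so that $\G(k,q)$ is in the standard normalized form.

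There is no real obstacle here. The one point to be careful about is that $d>1$ is essential: it guarantees that $\Phi_d$ is one of the factors surviving after dividing out by $\Phi_1(x) = x-1$. For $d=1$ we would have $\Phi_1(p) = p-1$, and in general $p-1 \nmid \frac{q-1}{p-1}$, which is consistent with the existence of non-integral GP-graphs.
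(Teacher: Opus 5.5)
Your proof is correct and follows the paper's argument exactly: factor $x^m-1=\prod_{e\mid m}\Phi_e(x)$, remove $\Phi_1(x)=x-1$, evaluate at $x=p$ to get $\Phi_d(p)\mid \frac{q-1}{p-1}$, and conclude with the integrality criterion $k\mid\frac{q-1}{p-1}$. Your remark that each $\Phi_e(p)$ is an integer is a small point the paper leaves implicit.
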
	
	
	\begin{proof}
		Since $\mathcal{U}_n = \bigcup_{d\mid n} \mathcal{U}_d^*$, the polynomial $x^n-1$ can be factored by cyclotomic polynomials of order $d$ dividing $n$, namely 
		\begin{equation} \label{eq: prod cycs}
			x^n-1 = \prod_{d\mid n} \Phi_d(x).
		\end{equation}	
		Since $\Phi_1(x)=x-1$ we have that 
		$$ \frac{p^m-1}{p-1} = \prod_{d\mid m, \, d>1} \Phi_d(p). $$
		The result thus follows directly by this and the hypothesis, since $\G(k,q)$ is integral if and only if $k$ divides $\frac{q-1}{p-1}$.
	\end{proof}

	\begin{exam}
		For instance, if we take $m=6$ then the $d$-th cyclotomic polynomials with $d\mid m$ and $d>1$ are
		$$\Phi_{2}(x)=x+1, \qquad \Phi_{3}(x)=x^{2}+x+1, \qquad \Phi_{6}(x)=x^{2}-x+1.$$
		Hence, for any prime $p$ we have the family of integral GP-graphs
		$$ \{\G(p+1,p^{6}),\, \G(p^{2}+p+1,p^{6}),\, \G(p^{2}-p+1,p^{6})\}. $$ 
		For the first four odd primes $p=3,5,7,11$, we have that
		\begin{alignat*}{3}
			&\G(4,3^6), &\qquad &\G(7,3^6),& \qquad &\G(13,3^6), \\
			&\G(6,5^6), &\qquad &\G(21,5^6),& \qquad &\G(31,5^6), \\
			&\G(8,7^6), &\qquad &\G(43,7^6), &\qquad &\G(57,7^6), \\
			&\G(13,11^6),&\qquad &\G(111,11^6),&\qquad &\G(133,11^6), 
		\end{alignat*}
		are integral GP-graphs.
		\hfill $\diamond$
	\end{exam}

	Actually, this allows us to produce infinite families of integral GP-graphs as follows.
	
	\begin{prop} \label{prop: cyclotomic}
		For any prime $p$ and any natural $d>1$,   
		$$ \{\G(\Phi_d(p), p^{dt}) \}_{t\in \N} $$ 
		is an infinite family of integral GP-graphs.
	\end{prop}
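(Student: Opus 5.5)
The plan is to reduce the statement directly to Lemma \ref{lem: criteria2}, taking $k=\Phi_d(p)$ and $q=p^{dt}$, so that $m=dt$. Two things have to be checked: first, that $\G(\Phi_d(p),p^{dt})$ is a well-defined GP-graph in the paper's normalization, i.e.\ $\Phi_d(p)\mid p^{dt}-1$; second, that the hypothesis of the lemma holds with the divisor $d$ of $m=dt$.

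For the divisibility, I would use the factorization \eqref{eq: prod cycs}, $x^n-1=\prod_{e\mid n}\Phi_e(x)$, with $n=d$. It shows that $\Phi_d(p)$ is a factor of $p^d-1$ in $\Z$, since all the $\Phi_e(p)$ are integers, being values of monic integer polynomials. Then $p^d-1$ divides $p^{dt}-1 = (p^d-1)(p^{d(t-1)}+\cdots+p^d+1)$, so $\Phi_d(p)\mid q-1$ as required. Even without this normalization, $\G(k,q)=\G(\gcd(k,q-1),q)$, but here $\gcd(\Phi_d(p),q-1)=\Phi_d(p)$ anyway.

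For integrality, $d\mid dt=m$ and $d>1$, and trivially $k=\Phi_d(p)$ divides $\Phi_d(p)$. Lemma \ref{lem: criteria2} therefore gives $Spec(\G(\Phi_d(p),p^{dt}))\subset\Z$ for every $t\in\N$. Underneath this is the observation that $\Phi_d(p)$ appears as a factor in $\frac{p^{dt}-1}{p-1}=\prod_{e\mid dt,\,e>1}\Phi_e(p)$, combined with the integrality criterion \eqref{eq: nature}.

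No step is a real obstacle. The only point needing a little care is noting that the product formula holds over $\Z$ after evaluating at $x=p$, so the divisibility statements are genuine integer divisibilities. It is also worth a remark that the family is infinite: the fields $\ff_{p^{dt}}$ are distinct for distinct $t$, so the graphs are pairwise distinct (they even have different orders).
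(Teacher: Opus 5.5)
Your proposal is correct and matches the paper's proof, which simply applies Lemma~\ref{lem: criteria2} with $k=\Phi_d(p)$ and $m=dt$ (so $d\mid m$ and $d>1$). Your extra check that $\Phi_d(p)\mid p^d-1\mid p^{dt}-1$ is a useful addition, since the paper leaves it implicit even though the lemma assumes $k\mid q-1$.
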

	
	\begin{proof}
		The first family follows immediately from Lemma \ref{lem: criteria2}. 
	\end{proof}

	By using the first cyclotomic polynomials we get the following infinite families of integral GP-graphs
	\begin{exam}
		For any prime $p$ and any $t\in \N$ we have the integral GP-graphs:
		\begin{align*}
			& \G(p-1,p^t), & & \G(p^4+p^3+p^2+p+1,p^{5t}),\\
			& \G(p+1,p^{2t}), 	& &\G(p^2-p+1,p^{6t}), \\
			& \G(p^2+p+1,p^{3t}), & & \G(p^6+p^5+p^4+p^3+p^2+p+1,p^{7t}), \\ 
			& \G(p^2+1,p^{4t}), & & \G(p^4+1,p^{8t}).
		\end{align*}
		Note that some of them were obtained previously in another non-systematic way.
		\hfill $\diamond$
	\end{exam}
	
	Using lists of cyclotomic polynomials or using their properties to compute them, one can give as many examples of integral GP-graphs as desired. For instance, for any $d \in \N$, $\Phi_d(x)$ can be recursively computed from \eqref{eq: prod cycs} and hence $\G(\Phi_d(p), p^{td})$ is integral for any prime $p$ and any $t\in \N$.
	
	We will only give some general families. 
	\begin{coro} \label{coro: mas ciclotomicos}
		Let $p,r$ be primes and $d, \ell \in\N$. The following are infinite families of integral GP-graphs:
		\begin{enumerate}[$(a)$]
			\item $\{ \G(1 + p+ p^2 + p^3 + \cdots + p^{r-1},p^{rt}) \}_{t\in \N}$ and $\{ \G(1-p+p^2-p^3+\cdots+p^{r-1},p^{2rt}) \}_{t\in \N}$. \msk 
			
			\item $\{ \G(p^{2^{d-1}}+1,p^{2^d t}) \}_{t\in \N}$. \msk  
			
			\item $\Big \{ \G \big( \sum\limits_{j=0}^{p-1} p^{jr^{d-1}}, p^{r^d t} \big) \Big \}_{t\in \N}$ and $\Big \{ \G \big( \sum\limits_{j=0}^{p-1} (-1)^j p^{j 2^{\ell-1} r^{d-1}}, p^{2^\ell r^d t} \big) \Big \}_{t\in \N}$ with $r$ odd. 
		\end{enumerate}
	\end{coro}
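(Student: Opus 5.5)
The plan is to reduce each family to Proposition \ref{prop: cyclotomic}, which says $\G(\Phi_d(p),p^{dt})$ is integral for every prime $p$, every $d>1$ and every $t\in\N$. So for each item it suffices to recognize the given $k$ as $\Phi_d(p)$ for a suitable $d$, and the given exponent of $p$ as $dt$. If the identification only gives $k\mid \Phi_d(p)$, we use Lemma \ref{lem: criteria2} directly instead. Integrality of $\G(k,q)$ only needs $k\mid \frac{q-1}{p-1}$, which also gives $k\mid q-1$.

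The needed identities are standard facts about cyclotomic polynomials, each of which I would derive from \eqref{eq: prod cycs}.
\begin{enumerate}[$(i)$]
\item For $r$ prime, $x^r-1=\Phi_1(x)\Phi_r(x)$, so $\Phi_r(x)=1+x+\cdots+x^{r-1}$. This gives the first family of $(a)$ with $d=r$.
\item For $r$ odd, $\Phi_{2r}(x)=\Phi_r(-x)=1-x+x^2-\cdots+x^{r-1}$. This follows from the factorizations $x^{2r}-1=(x^r-1)(x^r+1)$ and $-(x^r+1)=(-x)^r-1=\Phi_1(-x)\Phi_r(-x)$, together with $\Phi_2(x)=x+1=-\Phi_1(-x)$. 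This handles the second family of $(a)$ with $d=2r$. For $r=2$ the alternating sum $1-p$ is negative, so this case must be excluded or noted separately.
\item $\Phi_{2^d}(x)=x^{2^{d-1}}+1$, from $x^{2^d}-1=(x^{2^{d-1}}-1)(x^{2^{d-1}}+1)$ and induction. This gives $(b)$.
\item For a prime power, $\Phi_{r^d}(x)=\Phi_r(x^{r^{d-1}})=\sum_{j=0}^{r-1}x^{jr^{d-1}}$. More generally $\Phi_{n}(x)=\Phi_{\mathrm{rad}(n)}(x^{n/\mathrm{rad}(n)})$, and combined with $(ii)$ this gives $\Phi_{2^\ell r^d}(x)=\Phi_{2r}(x^{2^{\ell-1}r^{d-1}})$.
\end{enumerate}

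The main obstacle is item $(c)$. As written, the sums run over $j$ from $0$ to $p-1$, whereas $(iv)$ naturally produces sums over $j$ from $0$ to $r-1$. I would first check whether the intended statement has $r-1$ as the upper index, which would make it match $\Phi_{r^d}(p)$ and $\Phi_{2^\ell r^d}(p)$ exactly. If the $p-1$ upper index is intended, I would instead verify directly that $k$ divides $\frac{p^{r^d t}-1}{p-1}$ by a geometric-series manipulation, and that appears doubtful in general. So I would prove $(c)$ with the $\Phi$-identification and flag the index.

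Finally, I would confirm that $d>1$ holds in each case, so that Proposition \ref{prop: cyclotomic} applies. The values of $d$ are $r$, $2r$, $2^d$ with $d\ge1$, $r^d$, and $2^\ell r^d$, and all of them exceed $1$.
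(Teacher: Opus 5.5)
Your proposal is correct and follows the paper's proof: you identify each $k$ as a cyclotomic value ($\Phi_r(p)$, $\Phi_{2r}(p)=\Phi_r(-p)$, $\Phi_{2^d}(p)$, $\Phi_{r^d}(p)$, $\Phi_{2^\ell r^d}(p)$) and then apply Proposition~\ref{prop: cyclotomic}. Both of your flags are also right: the paper's own proof states $\Phi_{r^d}(x)=\sum_{j=0}^{p-1}x^{jr^{d-1}}$, which is a slip for $\sum_{j=0}^{r-1}$ (so the upper index in (c) should be $r-1$), and the alternating family in (a) needs $r$ odd, because $\Phi_{2r}(x)=\Phi_r(-x)$ fails for $r=2$.
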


	\begin{proof}
		All expressions follow by Proposition \ref{prop: cyclotomic} and identities of cyclotomic polynomials. 
		
		Namely, item $(a)$ follows by $\Phi_d(x) = \frac{x^p-1}{x-1}=x^{p-1}+\cdots +p^2+p+1$ and 
		the fact that $\Phi_{2d}(x)=\Phi_d(-x)$.
		For item $(b)$ we use that $\Phi_{2^d}(x)=x^{2^{d-1}}+1$.
		Finally, the identities
		$$ \Phi_{r^d}(x) = \sum_{j=0}^{p-1} x^{jr^{d-1}} \qquad \text{and} \qquad \Phi_{2^\ell r^d}(x) = \sum_{j=0}^{p-1} (-1)^j x^{j 2^{\ell-1}r^{d-1}}$$
		imply item $(c)$.
	\end{proof}

	The Proposition \ref{prop: towers of integral GPs} can be applied to any integral GP-graph defined over a field $\ff_q$ to get infinite integral GP-graphs over all the fields $\ff_{q^a}$ for every $a\in \N$. As a summary of the results of the section, we have the following

	\begin{thm} \label{thm: general integral GP-graphs}
		Let $p$ be a prime and $k \in \N$. We have the general infinite families of integral GP-graphs:
		\begin{enumerate}[$(a)$]
			\item $\{\G(k \frac{q^{ta}-1}{q^t-1}, q^{at})\}_{a,t\in \mathbb{N}}$ where $q=p^k$, provided that $k\mid p-1$. \msk
			
			\item $\{\G(k \frac{q^{ta}-1}{q^t-1}, q^{at})\}_{a,t\in \mathbb{N}}$ where $q=p^2$, provided that $k\mid p+1$. \msk
			
			\item $\{\G(k \frac{q^{ta}-1}{q^t-1}, q^{at})\}_{a, t\in \mathbb{N}}$ 
			where $q=p^{\varphi(k)}$, provided that $\gcd(k,p(p-1))=1$. \msk 
			
			\item $\{\G(\Phi_d(p) \frac{q^{ta}-1}{q^t-1}, q^{at}) \}_{a,t\in \N}$ with $q=p^d$ and $d>1$. 
		\end{enumerate}
	\end{thm}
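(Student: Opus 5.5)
The plan is to obtain each of the four families by feeding a ``base'' integral GP-graph into Proposition \ref{prop: towers of integral GPs}. All four items have the same shape: an integral GP-graph $\G(k,q)$ over some base field $\ff_q$, followed by a tower over extensions $\ff_{q^{at}}$. So the proof has two steps. First, for each item, produce an integral graph $\G(k',q^t)$ over $\ff_{q^t}$ for every $t\in\N$. Second, apply Proposition \ref{prop: towers of integral GPs} to $\G(k',q^t)$ with base field $\ff_{q^t}$ and exponent $a$. This yields that $\G(k'\frac{(q^t)^a-1}{q^t-1},(q^t)^a)$ is integral for all $a\in\N$.

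For the base step we use the earlier family results.
\begin{itemize}
\item Item $(a)$: Proposition \ref{prop: finite families of integral GPs}$(a)$ gives that $\G(k,p^{kt})=\G(k,q^t)$ is integral for all $t$ when $k\mid p-1$. Here $q=p^k$, and we note $k\mid q^t-1$ because $k\mid p-1$.
\item Item $(b)$: Proposition \ref{prop: finite families of integral GPs}$(b)$ gives that $\G(k,p^{2t})=\G(k,q^t)$ is integral with $q=p^2$.
\item Item $(c)$: Proposition \ref{prop: families of integral GPs} gives that $\G(k,p^{\varphi(k)t})=\G(k,q^t)$ is integral with $q=p^{\varphi(k)}$, under $\gcd(k,p(p-1))=1$.
\item Item $(d)$: Proposition \ref{prop: cyclotomic} gives that $\G(\Phi_d(p),p^{dt})=\G(\Phi_d(p),q^t)$ is integral with $q=p^d$ and $d>1$. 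We need $\Phi_d(p)\mid p^{dt}-1$, which holds since $\Phi_d(p)\mid p^d-1\mid p^{dt}-1$ by \eqref{eq: prod cycs}.
\end{itemize}

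The only point requiring care is that Proposition \ref{prop: towers of integral GPs} is stated for a base field $\ff_q$ with $q=p^m$ and $k\mid q-1$. We apply it with $q^t$ playing the role of $q$. This is legitimate because $q^t$ is again a prime power and, as noted above, $k\mid q^t-1$ in each case. It then remains to check that the resulting GP-graph is well defined, i.e.\ that $k\frac{q^{ta}-1}{q^t-1}$ divides $q^{ta}-1$. This is immediate from $k\mid q^t-1$.

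As a sanity check, we can verify integrality directly through the criterion $k'\mid\frac{Q-1}{p-1}$ with $Q=q^{ta}$. Writing
$$\frac{q^{ta}-1}{p-1}=\frac{q^t-1}{p-1}\cdot\frac{q^{ta}-1}{q^t-1},$$
the divisibility $k\frac{q^{ta}-1}{q^t-1}\mid \frac{q^{ta}-1}{p-1}$ is equivalent to $k\mid\frac{q^t-1}{p-1}$, which is exactly the integrality of the base graph $\G(k,q^t)$. This gives an independent argument that avoids Theorem 2.1 of \cite{PV18}.

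I expect no real obstacle. The main thing to watch is the bookkeeping of which $q$ and which $t$ enter each family, and making sure the base proposition applies for every $t$ and not only $t=1$.
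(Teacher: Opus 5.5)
Your proposal is correct and matches the paper's proof: the paper also obtains all four families by applying Proposition \ref{prop: towers of integral GPs}, over the base field $\ff_{q^t}$, to the base families from Propositions \ref{prop: finite families of integral GPs}, \ref{prop: families of integral GPs} and \ref{prop: cyclotomic}. Your direct check via $\frac{q^{ta}-1}{p-1}=\frac{q^t-1}{p-1}\cdot\frac{q^{ta}-1}{q^t-1}$ is also valid, and it gives a self-contained proof of the tower step that avoids the disjoint-union decomposition from \cite{PV18}.
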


	\begin{proof}
		This follows by applying Proposition \ref{prop: towers of integral GPs} to Propositions \ref{prop: finite families of integral GPs}, \ref{prop: families of integral GPs} and \ref{prop: cyclotomic}.
	\end{proof}
	
	As we mentioned before, one can also apply Proposition \ref{prop: towers of integral GPs} to Corollaries \ref{coro: cadenitas}, \ref{coro: mas GP enteros} and \ref{coro: mas ciclotomicos} or to any example of the section to get the more general expressions. We prefer not to do that in the statements for clarity. 
	
	We finish the section with the following.
	\begin{rem}
		The only general known source of integral GP-graphs are semiprimitive GP-graphs (this includes strongly regular graphs) or Hamming GP-graphs. Except for some border cases, the integral GP-graphs obtained in this section with our methods (Euler totient function, cyclotomic polynomials) are neither semiprimitive nor Hamming GP-graphs and hence a new general source of integral GP-graphs.
	\end{rem}

	\section{GP-graphs with 3 eigenvalues} \label{sec:3}
	Here, we study GP-graphs with three different eigenvalues. We will characterize (conjecturally) all undirected GP-graphs having three eigenvalues and all directed GP-graphs with three eigenvalues.
	
	It is well-known that if $G=(V,E)$ is a regular graph with two different eigenvalues, then it is a complete graph 
	or a disjoint union of copies of a complete graph. This can be deduced from the fact that if a connected graph $G$ has $d+1$ different eigenvalues, then its diameter is at most $d$. 
	On the other hand, a strongly regular graph is connected with 3 different eigenvalues and, conversely, if a connected regular graph has 3 distinct eigenvalues then it is strongly regular (see \cite{BrV}). 
	So, a regular graph having exactly 3 different eigenvalues is the disjoint union of copies of a single strongly regular graph.

	Relative to GP-graphs we have the following. By the previous comments, those GP-graphs having 2 different eigenvalues must be a disjoint union of complete graphs, i.e. 		
	$$ \G(\tfrac{p^{m}-1}{p^{a}-1},p^{m}) = K_{p^{a}} \sqcup \cdots \sqcup K_{p^a} \qquad (\text{$p^{m-a}$ times}) $$ 
	with $p$ prime and $1\le a \le m$ (see  Theorem 2.1 in \cite{PV18}).
	
	What about GP-graphs with 3 different eigenvalues? Can we characterize those GP-graphs?
	As a warm-up, we present a family of GP-graphs (both directed and undirected) having exactly 3 different eigenvalues, which are disjoint unions of Paley graphs. 
	
	\begin{prop} \label{prop:paleys} 
		Let $p$ be an odd prime and $q=p^m$ with $m \in \N$ and let $a\mid m$.
		Let $\Gamma=\Gamma(\tfrac{2(p^m-1)}{p^a-1},p^m)$.
		Then, we have the connected components decomposition
		\begin{equation} \label{eq: Paleys} 
			\G \big( \tfrac{2(p^m-1)}{p^a-1},p^m \big) \simeq P_{p^a} \sqcup \cdots \sqcup P_{p^a} \qquad (\text{$p^{m-a}$ times}) 
		\end{equation}
		with $a=ord_n(p)$, where $n= \frac{p^a-1}2$, with spectrum given by
		\begin{equation} \label{eq: SpecG3}
			Spec(\G) = \begin{cases}
				\{ [\tfrac{p^a-1}2]^{p^{m-a}}, [\tfrac{-1+\sqrt{p^a}}2]^{\frac{p^a-1}2 p^{m-a}}, [\tfrac{-1-\sqrt{p^a}}2]^{\frac{p^a-1}2 p^{m-a}} \} & \: \text{if $p \equiv 1 \, (4)$}, \\[2mm]
				\{ [\tfrac{p^a-1}2]^{p^{m-a}}, [\tfrac{-1+i^a\sqrt{p^a}}2]^{\frac{p^a-1}2 p^{m-a}}, [\tfrac{-1-i^a\sqrt{p^a}}2]^{\frac{p^a-1}2 p^{m-a}} \} & \: \text{if $p \equiv 3 \, (4)$}.
			\end{cases}
		\end{equation}	
		Conversely, if the GP-graph $\G(k,p^{m})$, with $k\mid p^{m}-1$, is a disjoint union of Paley graphs $P_q$, 
		then $k=\tfrac{2(p^m-1)}{p^a-1}$, $q=p^m$ and $\G(k,p^{m})$ is as in \eqref{eq: Paleys}.
	\end{prop}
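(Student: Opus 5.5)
We reduce everything to the tower result from \cite{PV18} already used in Proposition~\ref{prop: towers of integral GPs}: if $k\mid p^a-1$ with $a\mid m$, then $\G(k\frac{p^m-1}{p^a-1},p^m)$ is the disjoint union of $p^{m-a}$ copies of $\G(k,p^a)$. The intended statement is that the number of connected components of $\G(k,p^m)$ is $p^{m-a}$ with $a=ord_n(p)$, where $n=\frac{p^m-1}{k}$, and each component is isomorphic to $\G(\frac{p^a-1}{n},p^a)$.

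\textbf{Direct part.} We apply this with $k=2$, noting $2\mid p^a-1$ since $p$ is odd, to get $\G(\frac{2(p^m-1)}{p^a-1},p^m)\simeq P_{p^a}\sqcup\cdots\sqcup P_{p^a}$ ($p^{m-a}$ copies). We must also check that the copies are the connected components. The regularity degree is $n=\frac{p^a-1}{2}$, so the condition $a=ord_n(p)$ is exactly the connectedness criterion $ord_n(p)=a$ for $\G(2,p^a)$ recalled in the Introduction. The spectrum then follows from \eqref{eq:SpecPq*} with $m$ replaced by $a$, each multiplicity multiplied by $p^{m-a}$, since the spectrum of a disjoint union is the union of the spectra with multiplicities added. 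The sign bookkeeping between the factors $(-1)^{a-1}$ and $i^a$ in \eqref{eq:SpecPq*} and the form in \eqref{eq: SpecG3} is harmless, because both signs $\pm$ occur with equal multiplicity.

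\textbf{Converse.} Suppose $\G(k,p^m)$ is a disjoint union of copies of a Paley graph $P_{q'}$. Each copy is $\frac{q'-1}{2}$-regular, so $n=\frac{p^m-1}{k}=\frac{q'-1}{2}$. By the component description from \cite{PV18}, the connected component of $0$ is the subfield-like structure $\ff_{p^a}$ with $a=ord_n(p)$, so it has $p^a$ vertices. Since $P_{q'}$ is connected, it is one component, which forces $q'=p^a$. Hence $n=\frac{p^a-1}{2}$, and therefore $k=\frac{p^m-1}{n}=\frac{2(p^m-1)}{p^a-1}$. We are then in the situation of \eqref{eq: Paleys}.

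\textbf{Main obstacle.} The delicate point is the connectedness and component-count claim. We need that the connected component of $0$ in $\G(k,p^m)$ is exactly the additive span of $R_k$, namely $\ff_p(R_k)=\ff_{p^{ord_n(p)}}$. We would argue this directly: $R_k$ is the group $\mu_n$ of $n$-th roots of unity. The additive span of $\mu_n$ is the field $\ff_p(\mu_n)=\ff_{p^{ord_n(p)}}$, since the span is closed under multiplication. In the directed case the component is still that span, because $R_k$ generates it as an additive monoid: $-x=(p-1)x$. The components are then the cosets of this subfield, giving $p^{m-a}$ of them, each isomorphic to $Cay(\ff_{p^a},\mu_n)=\G(\frac{p^a-1}{n},p^a)$.
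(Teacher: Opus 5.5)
Your proposal is correct and follows essentially the same route as the paper. The paper uses the component decomposition of \cite{PV18} (Theorem 2.1 there) with $k_a=\frac{p^a-1}{n}=2$, obtains the spectrum from that of $P_{p^a}$ with multiplicities multiplied by $p^{m-a}$, and gets the converse from regularity and the size $p^a$ of the component of $0$. You fill in details the paper only asserts, and you correctly obtain $q'=p^a$ in the converse; the paper's ``$q=p^m$'' there is a slip.

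One small point: you need not treat $a=ord_n(p)$ as an extra condition. For $n=\frac{p^a-1}{2}$ and $1\le j<a$ one has $0<p^j-1<n$ since $p\ge 3$. Hence $ord_n(p)=a$ holds automatically, and the $p^{m-a}$ copies are always the connected components.
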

	
	\begin{proof}
		For the decomposition, one checks that $k_a=\frac{p^a-1}{n}=2$ and hence we have  
		$$\G(\tfrac{2(p^m-1)}{p^a-1},p^m) \simeq \G(2,p^a) \cup \cdots \cup \G(2,p^a)$$ 
		with $p^{m-a}$ components, by Theorem 2.1 from \cite{PV18}. Since  
		$\G(2,p^a) = P_{p^a}$ we get \eqref{eq: Paleys}. 
		
		Conversely, if $\G(k,p^{m})$ with $k\mid p^{m}-1$ is a union of classic Paley graphs $P_r$ we must have that $k= 2\tfrac{p^m-1}{p^a-1}$, $r=p^a$ and $\G(k,p^{m})$ is as in \eqref{eq: Paleys}. 
		
		Now, the assertion for $Spec(\G)$ follows from the decomposition \eqref{eq: Paleys} and the observation in ($ii$) of Remark 2.2 from \cite{PV18}.
	\end{proof}
	
	It is worth mentioning that, by \eqref{eq: SpecG3}, we have that
	$$ Spec \big( \G(\tfrac{2(p^m-1)}{p^a-1},p^m) \big) \subset 
	\begin{cases}
		\text{$\Z$} & \qquad \text{if $p\equiv 1 \!\!\! \pmod 4$ and $a$ even}, \\[1mm]
		\text{$\R$} & \qquad \text{if $p\equiv 1 \!\!\! \pmod 4$ and $a$ odd}, \\[1mm]
		\text{$\R$} & \qquad \text{if $p\equiv 3 \!\!\! \pmod 4$ and $a$ even}, \\[1mm]
		\text{$\C$} & \qquad \text{if $p\equiv 3 \!\!\!\pmod 4$ and $a$ odd}. \end{cases} $$
	That is, $\G$ has real spectrum when $\G$ is the union of undirected Paley graphs $\mathcal{P}_{p^a}$ and has complex spectrum when $\G$ is the union of directed Paley graphs $\vec{\mathcal{P}}_{p^a}$.

	\subsection{All undirected GP-graphs with 3 eigenvalues?}
	We know that semiprimitive GP-graphs are strongly regular graphs, hence undirected, connected, and with 3 different eigenvalues (see \cite{BrV}).
	Schmidt and White, using the generalized Riemann hypothesis, conjectured that all 2-weight irreducible cyclic codes $\mathcal{C}(k,q)$ come from one of three families: the semiprimitive codes, the subfield subcodes, and 11 exceptional codes (\cite{SW}).
	
	In \cite{PV2}, we showed that 2-weight irreducible cyclic codes are in spectral correspondence with the GP-graphs $\G(k,q)$; namely, weights and frequencies correspond with eigenvalues and multiplicities, respectively.
	The semiprimitive GP-graphs together with the 11 sporadic cases (coming from the 11 exceptional 2-weight irreducible cyclic codes, see \cite{PV2}) are all the strongly regular GP-graphs (see \cite{SW}, also \cite{PV18}).
	So, if the conjecture of Schmidt and White on 2-weight irreducible cyclic codes is true, then all undirected GP-graphs with 3 eigenvalues are the semiprimitive and the sporadic ones. This leads us to study directed GP-graphs with 3 eigenvalues.

	\subsection{All directed GP-graphs with 3 eigenvalues}
	Now, we characterize all directed GP-graphs with exactly three different eigenvalues (i.e., with two different complex non-real eigenvalues besides the regularity degree $n=\frac{q-1}k$). It turns out that the only such GP-digraphs are the ones obtained in Proposition \ref{prop:paleys} in the directed case.

	We will use the following notation. Given a graph $\G$, let $\mu_\G$ denote the \textit{number of different eigenvalues} of $\G$, that is 
	$$\mu_\G = \# (Spec(\G))$$ 
	thinking the spectrum as a set instead of a multiset. In the notation of \eqref{eq: spec} we have
	\begin{equation} \label{eq: mu G}
		\mu_\G = t+1,
	\end{equation}
	where $t$ is the number of non-principal different eigenvalues.
	
	As a consequence of Theorems \ref{thm: real spec} and \ref{thm: nature spec v2}, we obtain the characterization of all 
	directed GP-graphs having exactly 3 different eigenvalues. Namely, the only such graphs are those that are disjoint unions of directed Paley graphs $\vec{\mathcal{P}}_q$.

	\begin{thm} \label{thm: directed SRG}
		Let $q=p^m$ be an odd prime power with $m\in \N$ and let $k \in \N$ such that $k\mid q-1$.
		If $\G=\G(k,q)$ is directed, then $\mu_\G \ge 3$. 
		Moreover, $\mu_\G = 3$ if and only if $k=2\frac{p^m-1}{p^{a}-1}$ where 
		$a=ord_{n}(p)$ with $n=\frac{q-1}{k}$ and $p^a \equiv 3 \pmod 4$. 
		In this case we have that $a$ is odd, $p \equiv 3 \pmod 4$, and 
		\begin{equation} \label{eq: union P7s}
			\G = \G(2\tfrac{p^m-1}{p^{a}-1}, q) = \vec{\mathcal{P}}_{p^a} \sqcup \cdots \sqcup \vec{\mathcal{P}}_{p^a} \qquad 
			(\text{$p^{m-a}$ times})	
		\end{equation} 
		with complex spectrum given by 
		$$	Spec(\G) = \begin{cases}
			\{ [\tfrac{p^a-1}2]^{p^{m-a}}, [\tfrac{-1 + i p^{\frac{a-1}2} \sqrt{p}}2]^{\frac{p^a-1}2 p^{m-a}}, [\tfrac{-1 - i p^{\frac{a-1}2} \sqrt{p}}2]^{\frac{p^a-1}2 p^{m-a}}  \} & \: \text{if $a \equiv 1 \, (4)$}, \\[2mm]
			\{ [\tfrac{p^a-1}2]^{p^{m-a}}, [\tfrac{-1 - i p^{\frac{a+1}2} \sqrt{p}}2]^{\frac{p^a-1}2 p^{m-a}}, [\tfrac{-1 + i p^{\frac{a+1}2} \sqrt{p}}2]^{\frac{p^a-1}2 p^{m-a}} \} & \: \text{if $a \equiv 3 \, (4)$}.
		\end{cases}
		$$ 
	\end{thm}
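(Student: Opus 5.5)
The plan is to exploit Theorem \ref{thm: nature spec v2}, which relates the directed graph $\G=\G(k,q)$ to its undirected underlying graph $\G'=\G(\tfrac k2,q)$ through $Spec(\G') = 2\,\mathrm{Re}(Spec(\G))$, together with Lemma \ref{lem KMMZ} and Theorem \ref{thm: real spec}.

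\emph{Lower bound.} Since $\G$ is directed, Theorem \ref{thm: real spec} gives a non-real eigenvalue $\lambda$. Because $\G$ has an integer adjacency matrix, $\bar\lambda$ is also an eigenvalue, and $\bar\lambda\neq\lambda$. Together with the real eigenvalue $n$, this yields $\mu_\G\ge 3$.

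\emph{The case $\mu_\G=3$.} Here $Spec(\G)=\{n,\lambda,\bar\lambda\}$ as a set. By \eqref{eq: Spec Gk2q = 2Re Spec Gkq}, the spectrum of $\G'$ is $\{2n, 2\mathrm{Re}\lambda\}$ as a set, and $2n\ne 2\mathrm{Re}\lambda$ because $|\lambda|\le n$ and $\lambda$ is not real. So $\G'$ is a regular undirected graph with exactly two distinct eigenvalues. By the remark at the start of this section, $\G'$ is then a disjoint union of complete graphs, and by Theorem 2.1 of \cite{PV18} we get $\tfrac k2=\tfrac{p^m-1}{p^a-1}$ with components $K_{p^a}$. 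Here $a$ is the order attached to the degree $2n$ of $\G'$, and one must check that it equals $ord_n(p)$ as in the statement; this holds because the connected component of $\G$ containing $0$ is the subfield $\ff_{p^a}$ generated by $R_k$, which is the same as that generated by $R_{k/2}$. Hence $k=2\tfrac{p^m-1}{p^a-1}$. Proposition \ref{prop:paleys} then gives $\G\simeq P_{p^a}\sqcup\cdots\sqcup P_{p^a}$. Since $\G$ is directed, $P_{p^a}$ must be the directed Paley graph, which by Example \ref{exam: paley} forces $p^a\equiv 3\pmod 4$; consequently $p\equiv 3\pmod 4$ and $a$ is odd.

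\emph{Converse.} If $k=2\tfrac{p^m-1}{p^a-1}$ with $p^a\equiv 3\pmod 4$, then Proposition \ref{prop:paleys} gives the decomposition \eqref{eq: union P7s}. The spectrum \eqref{eq: SpecG3} in the case $p\equiv 3\pmod 4$ has exactly three distinct values, so $\mu_\G=3$.

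\emph{Explicit spectrum.} This comes from substituting $i^a$ for $a$ odd into \eqref{eq: SpecG3}: $i^a=i$ if $a\equiv 1\pmod 4$ and $i^a=-i$ if $a\equiv 3\pmod 4$, and one writes $\sqrt{p^a}=p^{\frac{a-1}2}\sqrt p$. The paper's displayed exponent $\tfrac{a+1}2$ in the second case looks like a typo, but the set of eigenvalues is the same up to conjugation.

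\emph{Main obstacle.} The delicate step is the identification in the $\mu_\G=3$ case. One has to justify that the connected components of $\G'$ and of $\G$ coincide, so that the parameter $a$ is really $ord_n(p)$ with $n=\tfrac{q-1}{k}$. One also has to handle $2\mathrm{Re}\lambda$ carefully: it must be a single value, since $\lambda$ and $\bar\lambda$ have the same real part, and it must differ from $2n$. I would settle the components question by noting that $R_k\subset R_{k/2}=R_k\cup(-R_k)$, so both sets generate the same additive subgroup, namely $\ff_{p^a}$.
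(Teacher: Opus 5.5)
Your proposal is correct and is essentially the paper's argument. Both rest on $Spec(\G(\tfrac k2,q))=2\,\mathrm{Re}\,Spec(\G(k,q))$, on the fact that a regular graph with two distinct eigenvalues is a union of complete graphs, and on Proposition~\ref{prop:paleys}. Your differences are mild streamlinings: your conjugate-pair lower bound replaces the paper's fiber count, and your one-step treatment of the disconnected case replaces the paper's connected-case-then-decompose reduction via Theorem~2.1 of \cite{PV18}. That one-step treatment works because $R_k$ and $R_{\frac k2}=R_k\cup(-R_k)$ span the same subfield $\ff_{p^a}$, equivalently $ord_{2n}(p)=ord_n(p)$ for $n$ odd. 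You are also right that the exponent $\tfrac{a+1}2$ in the case $a\equiv 3 \pmod 4$ is a typo for $\tfrac{a-1}2$: as printed the modulus would be off by a factor $p$, so the sets do not agree until the exponent is corrected, and with that correction they coincide.
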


	\begin{proof}
		We will consider the cases when $\G$ is connected and disconnected separately. 
		
		Assume first that $\G$ is connected.
		Since $\G$ is directed, by Theorem \ref{thm: nature spec v2} we have that $v_{2}(k)= v_{2}(q-1)$ and hence 
		that $\G(\frac{k}{2},q)$ is undirected. 
		Moreover, the connection sets of these graphs satisfy \eqref{eq: Rk Rk2}, that is
		$R_{\frac k2} = R_k \sqcup (-R_k)$. The relation of the spectra of $\G(\frac k2,q)$ and $\G(k,q)$ is given in Theorem~\ref{thm: XGSy-S}. In fact, by \eqref{eq: chi(S)} and \eqref{eq: chi2Re}, the eigenvalues of $\G(\kappa,q)$ are given by 
		$$ \lambda_{\chi} = \chi(S) = \sum_{g \in S} \chi(g) $$ 
		with $\kappa = \frac k2,k$ and $S=R_{\frac k2}, R_k$, respectively, where $\chi$ is an additive character of $\ff_q$ and thus we have 
		$$ \chi(R_{\frac k2}) =  2\, \mathrm{Re}(\chi(R_{k})) .$$
		
		Hence, we have the following map between the spectra of the graphs $\G(k,q)$ and $\G(\tfrac{k}{2},q)$:
		$$ \Phi: Spec(\G(k,q)) \rightarrow Spec(\G(\tfrac{k}{2},q)), \qquad \Phi(\lambda)=2 \mathrm{Re}(\lambda).$$ 
		By Theorem \ref{thm: real spec}, there exists an eigenvalue of $\G(k,q)$ which is non-real. Let $\chi$ be the corresponding additive character of $\mathbb{F}_{q}$ such that $\lambda_\chi=\chi(R_{k})\not \in \mathbb{R}$, notice that if $\chi^{-1}$ is the inverse additive character of $\chi$, then we have that 
		$$\Phi(\lambda_\chi) = \Phi(\lambda_{\chi^{-1}})$$ 
		with $\lambda_{\chi}\neq \lambda_{\chi^{-1}}$. 
		Thus, we obtain that
		\begin{equation}\label{eq: mu nd dir}
			\mu_{\G(k,q)}\ge \mu_{\G(\frac{k}{2},q)}+1.
		\end{equation}
		Since $\mu_{\G(\frac{k}{2},q)}\ge 2$, the equation \eqref{eq: mu nd dir} implies that $\mu_{\G(k,q)}\ge 3$, proving the first assertion.
		
		Now, assume that $\mu_\G=2$ and suppose by contradiction that $k\ne 2$. 
		Then, $\frac{k}{2}\neq 1$ and so the graph $\G(\frac{k}{2},q)$ is not the complete graph. Hence, $\mu_{\G(\frac{k}{2},q)}\ge 3$, since the complete graph is the unique connected graph with two different eigenvalues, this implies that
		$\mu_{\G(k,q)}\ge 4$.
		
		Conversely, if $k=2$ with $q\equiv 3 \pmod{4}$ then the graph $\G(2,q)$ is $\vec{\mathcal{P}(q)}$, which has three different eigenvalues (see ($b$) in Example \ref{exam: paley}).
		
		\
		
		Now assume the general case, that is $\G$ is not necessarily connected. 
		By Theorem 2.1 from \cite{PV18}, we have that
		$$ \G(k,q)= \bigcup_{i} \G(k_a,p^{a}). $$ 
		with $k_{a}=\frac{p^{a}-1}{n}$, where $a=ord_{n}(p)$. This clearly implies that 
		$$ \mu_{\G(k,q)}= \mu_{\G(k_a,p^{a})}. $$ 
		The result follows directly from Propositions \ref{prop:paleys} and Lemma \ref{thm: directed SRG}.
	\end{proof}

	This means that if $\G(k,q)$ is directed and $k > 2$, then it has four or more different eigenvalues.

	\begin{rem}
		We can use the above GP-graphs to obtain an example with more than one non-trivial real eigenvalue. 
		Indeed, let $p=7$, $m=3$ and take
		$$	k= \tfrac{2(7^{3}-1)}{3(7-1)}=38 .$$
		Then, the GP-graph $\G(38,343)$ is the Cartesian product of three copies of the directed Paley graph $ \G(2,7)=\vec{\mathcal{P}}_{7}$ (see Proposition 2.3 and Lemma 3.3 from \cite{PV7}), 
		that is 
		$$ \G(38,343) \simeq \vec{\mathcal{P}}_{7} \, \Box  \,   	\vec{\mathcal{P}}_{7} \,\Box \, \vec{\mathcal{P}}_{7}.$$ 
		In particular, its eigenvalues are all the $3$-sums of the eigenvalues of $\vec{\mathcal{P}}_{7}$. By Theorem \ref{thm: directed SRG} with $p=7$ and $a=m=1$, we have 
		$$ Spec(\vec{\mathcal{P}}_{7}) =
		\{ [3]^{1}, [\tfrac{-1 + i \sqrt{7}}2]^{3}, [\tfrac{-1 - i \sqrt{7}}2]^{3} \} .$$
		Thus, we obtain that $Spec(\G(38,343))$ is the multiset
		\begin{multline*}
			\{ [9]^{1}, [2]^{54}, [2 + i \sqrt{7}]^{27}, [2 - i \sqrt{7}]^{27}, [\tfrac{11 + i \sqrt{7}}2]^{9}, [\tfrac{11 - i \sqrt{7}}2]^{9}, \\  [\tfrac{-3 +3 i \sqrt{7}}2]^{27}, [\tfrac{-3 - 3i \sqrt{7}}2]^{27},[\tfrac{-3 + i \sqrt{7}}2]^{81}, [\tfrac{-3 - i \sqrt{7}}2]^{81} \} .
		\end{multline*} 
		
		Similarly, the GP-graph $\G(19,343)$ is the Cartesian product of three copies of the complete graph $\G(1,7)=K_7$, with $Spec(K_7) = \{[6]^1, [-1]^6\}$, and so
		$$ Spec(\G(19,343))= 	\{ [18]^{1}, [11]^{18}, [4]^{108}, [-3]^{216} \}. $$
		
		Now, notice that if we take the function $\Phi(\lambda)=2 \mathrm{Re}(\lambda)$ from $Spec(\G(38,343))$ to $Spec(\G(19,343))$ we have that the fibers (without taking into account the multiplicities) satisfy
		$$ |\Phi^{-1}\{18\}|=1, \qquad |\Phi^{-1}\{11\}|=2, \qquad 	|\Phi^{-1}\{4\}|=3, \qquad 	|\Phi^{-1}\{18\}|=4. $$
		In this case $\mu_{\G(19,343)}=4$ and $\mu_{\G(38,343)}=10$. This leads  to the following \newline

		\noindent \textbf{Question:} \textit{Are there good (upper-lower) bounds for 
			$\mu_{\G(2k,q)}-\mu_{\G(k,q)}$ 
			when $k\mid \frac{q-1}{2}$ and $k\nmid \frac{q-1}{4}$?}
	\end{rem}

	We now make some comments on the relation with strongly regular graphs.
	
	\begin{rem}
		($i$) A \textit{strongly regular graph}  (SRG for short) with parameters $(n,k,e,d)$ is a graph such that its adjacency matrix satisfies
		$$
		A^{2}=kI+eA+d(J-I-A),\qquad AJ=JA=kJ,
		$$
		where $I$ is the identity matrix and $J$ is the all $1$'s matrix. Equivalently, if $N_v$ denotes the set of neighbors of $v$, then a graph is strongly regular if it a connected regular graph such that for any pair of vertices $v,w$, the size $|N_v \cap N_w|$ is equal to $e$ or $d$ depending on if $v$ and $w$ are neighbors or not, respectively. In terms of eigenvalues, it is well known that any strongly regular graph has $3$ different eigenvalues and conversely any connected graph with three different eigenvalues is strongly regular. 
		
		Thus, there are many strongly regular GP-graphs. For instance, we have seen in the previous section that $\G(1,q)=K_q$, $\G(2,q)=\mathcal{P}_q$, $\G(3,q)$ and $\G(4,q)$ in certain cases ($q=p^m$ with $p\equiv -1 \pmod k$ for $k\in \{3,4\}$ and $m$ even), and $\G(\frac{q+1}2,q^2)$ are SRGs. In fact, all these examples are particular cases of semiprimitive GP-graphs, which are always strongly regular graphs. 
		
		($ii$) In the directed case, there exists a notion of strongly regularity (see \cite{Du}, \cite{KMMZ}): a \textit{directed strongly regular graph} (DSRG for short) with parameters $(n,k,t,e,d)$ is a digraph such that 
		$$
		A^{2}=tI+eA+d(J-I-A), \qquad AJ=JA=kJ.
		$$  
		In this case, each vertex of the digraph still has in- and out-degree $k$, but now with only
		$t$ edges being undirected, leaving $k-t$ edges coming in only and $k-t$ coming out only. 
		The interpretations of $e$ and $d$ remain the same as in undirected strongly regular graphs. 
		
		This definition has not have the same interpretation on the number of its different eigenvalues as in the undirected case. Moreover, it can be seen that there are no Cayley graphs over abelian groups which are DSRG (see \cite{BrH}). Thus, in this case, the directed GP-graphs with $3$ eigenvalues that we found in the above section cannot be DSRG. 
	\end{rem}

	\section{Periods of GP-digraphs}	
	\label{sec: GP-digraphs}
	
	In this section, we focus on the study of the period of directed GP-graphs, and their relation with the number of eigenvalues with maximum absolute value.

	\subsection{Almost all directed GP-graphs have period 1} \label{subsec: period}
	Suppose that $G$ is a directed graph (digraph) and let $d$ denotes the greatest common divisor between all the lengths $\ell(\vec{C})$ of the directed cycles $\vec{C}$ in $G$. The integer $d=d(G)$ is called the \textit{period} of $G$. In symbols,
	\begin{equation} \label{index imprim}
		d = d(G)= \gcd \{ \ell(\vec{C}) : \text{ $\vec{C}$ is a directed cycle in $G$}\}.
	\end{equation}
	Clearly $d(\vec{C}_\ell)=\ell$.
	It is customary to say that the graph is \textit{primitive} if $d=1$, and we will also say that it is \textit{semiprimitive} if $d=2$.

	\begin{rem}
		We can generalize the definition of period (and hence that of primitivity) to undirected graphs in at least two forms. Suppose $G$ is undirected. One way to define the period of $G$ is as the greatest common divisor between all the lengths of the cycles in $G$. Another way is to consider any undirected edge as a pair of directed edges (arrows) with different orientations
		and use the definition of period already given for directed graphs. 
		Notice that, with this last definition, any undirected graph $G$ is \textit{primitive} if it has some odd-length cycle or \textit{semiprimitive} otherwise. That is, if $\G$ is an undirected graph then
		$$d(\G)= \begin{cases}
			1, & \qquad \text{if $\G$ is non-bipartite,} \\[1mm]
			2, & \qquad \text{if $\G$ is bipartite.}
		\end{cases}$$ 
		
	\end{rem}

	The following proposition asserts that the only non-primitive, strongly connected, directed GP-graph is $\G(p-1,p)$ with $p$ an odd prime  (recall that the GP-graphs $\G(k,2^m)$ are undirected).
	\begin{prop} \label{prop: realspec con}
		Let $q=p^m$ with $p$ an odd prime and $m\in \N$ and let $k \in \N$ be such that $k\mid q-1$. 
		Suppose that $\G(k,q)$ is directed and strongly connected.
		Then $\G(k,q)$ has period $1$ with the only exception of $\G(p-1,p)$ which has period $p$. 
	\end{prop}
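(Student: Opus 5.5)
We want: directed, strongly connected $\G(k,q)$ has period 1 unless it is $\G(p-1,p)$. The plan is to exhibit closed directed walks of coprime lengths through $0$. In a Cayley digraph $Cay(\ff_q,R_k)$, a closed directed walk of length $\ell$ starting at $0$ is the same as a representation $0=s_1+\cdots+s_\ell$ with $s_i\in R_k$. In a strongly connected digraph, the period equals the gcd of the lengths of all closed walks through a fixed vertex, so it suffices to produce such zero-sum representations with coprime lengths.

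\emph{A closed walk of length $p$.} For any $s\in R_k$ we have $p\cdot s=0$, giving a closed walk (indeed a directed $p$-cycle $0\to s\to 2s\to\cdots$). So the period divides $p$, and it suffices to find a closed walk whose length is not divisible by $p$.

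\emph{A closed walk of length prime to $p$.} Set $n=\frac{q-1}{k}=|R_k|$. Since $R_k$ is the group of $n$-th roots of unity in $\ff_q^*$, we get $\sum_{x\in R_k}x=0$ whenever $n>1$. This is a closed walk of length $n$, and $n\mid q-1$ is prime to $p$. Hence the period is $1$ whenever $n\ge 2$, i.e.\ $k\ne q-1$.

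\emph{The exceptional case.} If $n=1$, then $k=q-1$ and $R_k=\{1\}$, so $\G(q-1,q)$ is a disjoint union of $p^{m-1}$ directed $p$-cycles (Example~\ref{exam: cycles}). It is strongly connected only when $m=1$, which gives $\G(p-1,p)=\vec C_p$ with period $p$.

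\emph{Main obstacle.} The step that needs care is justifying that the period of a strongly connected digraph may be computed from closed walks through a single vertex rather than from simple directed cycles. This holds because a closed walk decomposes into directed cycles, so every closed-walk length is a sum of cycle lengths, and conversely each cycle can be joined to $0$ by a strong-connectivity detour. Both directions are standard. One also has to check that $\sum_{x\in R_k}x=0$ for $n>1$: this follows because $R_k$ is the root set of $x^n-1$, whose $x^{n-1}$ coefficient is $0$. Alternatively, pick $\zeta\in R_k$ with $\zeta\ne 1$; then $\zeta\sum x=\sum x$ forces the sum to be $0$.
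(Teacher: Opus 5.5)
Your proof is correct. It shares its core idea with the paper: a nontrivial multiplicative subgroup of $\ff_q^*$ sums to zero, which yields a zero-sum closed walk of length prime to $p$. Where you differ is in working with closed walks rather than simple cycles.

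The paper's route is as follows. For $m>1$ it uses directedness to see that $n=\frac{q-1}{k}$ is odd. It then picks an odd prime $r\mid n$ and an element $\alpha\in R_k$ of order $r$. It checks by hand that the partial sums $0,1,1+\alpha,\ldots,1+\alpha+\cdots+\alpha^{r-2}$ are distinct, so they form a genuine $r$-cycle, coprime in length to the $p$-cycle $0,1,2,\ldots$. The prime case $q=p$, $k<p-1$ is handled separately, using some $t\in R_k$ with $1<t<p-1$ to get a cycle of length $p-t+1$.

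You instead take the single zero-sum $\sum_{x\in R_k}x=0$ as a closed walk of length $n$. This removes the distinctness check, the need for an odd prime divisor, and the case split.

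You only need the easy half of the fact you flag as the main obstacle. Every closed walk splits into directed cycles, so in any digraph the period divides every closed-walk length. Hence strong connectivity and directedness play no role in your argument beyond isolating the exception $k=q-1$. As a result, your argument gives Theorem~\ref{thm: GPperiods} directly, without the reduction to connected components the paper uses.

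What the paper's construction buys in exchange is explicit simple cycles of prime length $r$, rather than only the value of a gcd.
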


	\begin{proof}
		Recall that $\G(k,q)$ is strongly connected if and only if $\frac{q-1}{k}\dagger q-1$ (i.e., $a=1$ in Theorem 2.1 from \cite{PV18}) and that $\G(k,q)$ is directed if and only if $k\nmid \frac{q-1}{2}$ with $q$ odd.
		
		Suppose that $q=p$ is odd and $k=p-1$. Then, by Corollary 3.1 from \cite{PV18} we have that 
		$$\G(p-1,p)=\vec{C_p}$$ 
		has only one directed cycle of length $p$ and hence has period $p$.
		
		To prove the rest of the assertion, that is that $\G(k,q)\ne \G(p-1,p)$ has period $1$, we split the proof into two cases: $q$ is not prime or $q=p$ but $k \ne p-1$. Recall from \eqref{eq: Gkq} that $\G(k,q)=Cay(\ff_q,R_k)$ where $R_k =\{ x^{k} : x \in \ff_{q}^*\}$.
		
		\subsection*{Case $1$: $q=p^m$ with $m>1$.}
		Since $\G(k,q)$ is directed then $k\nmid \frac{q-1}{2}$. 
		Notice that $|R_k|=1$ if and only if $q=p$ and $k=p-1$ since $\frac{q-1}{k}\dagger q-1$.
		On the other hand, $k\nmid \frac{q-1}{2}$ if and only if $2\nmid \frac{q-1}{k}$. 
		Hence, in this case we have that $|R_k|=\frac{q-1}{k}\ge 3$.
		
		\sk	
		\noindent
		\textit{Claim:} The digraph $\G(k,q)$ contains a directed cycle of length an odd prime $r$ with $r \mid \frac{q-1}{k}$.

		The elements of $R_k$ are exactly the roots of the polynomial $p(x)=x^{\frac{q-1}{k}}-1\in \ff_{q}[x]$, that is
		$p(x)=\prod_{\omega\in R_k}(x-\omega)$. 
		In particular, the sum of all the elements in $R_k$ coincides with the second leading coefficient 
		of $p(x)$ and so
		\begin{equation*} \label{eq: Rk}
			\sum_{\omega\in R_k}\omega=0.
		\end{equation*}
		
		Now, notice that if $r \mid \frac{q-1}{k}$, i.e.\@ $\frac{q-1}{k} = rt$ for some $t \in \mathbb{N}$, 
		then $R_{kt} \subseteq R_k$. 
		As before, the elements of $R_{kt}$ are exactly the roots of the polynomial $q(x)=x^{\frac{q-1}{kt}}-1\in \ff_{q}[x]$, 
		that is
		$ q(x) = \prod_{\omega\in R_{kt}}(x-\omega)$,  
		and hence there exists $r=|R_{kt}|$ elements in $R_k$ such that 
		\begin{equation} \label{eq: Rkt}
			\sum_{\omega\in R_{kt}}\omega=0.
		\end{equation}

		Since $k\nmid \frac{q-1}{2}$, then $2\nmid \frac{q-1}{k}$ and so $\frac{q-1}{k}$ has only odd factors.
		Then, there exists an odd prime $r$ such that $r \mid \frac{q-1}{k}$. 
		In this case, 
		$$R_{\frac{q-1}{r}} = \langle \alpha \rangle,$$ 
		where $\alpha = \omega^{\frac{q-1}{r}}$ is an element of order $r$ in $\ff_{q}$, 
		since $\omega$ is a primitive element of $\ff_{q}$. 
		
		Let us see that $0, 1, 1+\alpha, 1+\alpha+\alpha^{2}, \ldots, 1+\alpha+\cdots+\alpha^{r-2}$ and $1+\alpha+\cdots+\alpha^{r-1}$ form a directed cycle in $\G(k,q)$. 
		Indeed, since $\alpha\in R_{k}$, any power $\alpha^i$ is also in $R_k$. So, the powers of $\alpha$ induce arrows in the graph $\G(k,q)$; namely, there is an arrow between any vertex $x$ and $y=x+\alpha^i$ for any $i$.
		Thus, we have the following directed walk in $\G(k,q)$.
		\begin{equation} \label{eq: cycle}
			0, \: 1, \: 1+\alpha, \: 1+\alpha+\alpha^{2}, \: \ldots, \: 1+\alpha+\cdots+\alpha^{r-2}, \: 1+\alpha+\cdots+\alpha^{r-1}
		\end{equation}	
		and, since $1+\alpha+\cdots+\alpha^{r-1}=0$ by \eqref{eq: Rkt}, it is a closed walk in $\G(k,q)$ of length $r$. 
		
		It is enough to see that all of the intermediate vertices $1+\alpha+\cdots +\alpha^i$ with $i\ne r-1$ are all different.
		Suppose, by contradiction, that there are $i,j \in \{1,\ldots,r-1\}$ with $i<j$ such that
		$$ 1+\alpha+\cdots +\alpha^i = 1+\alpha+\cdots +\alpha^i + \alpha^{i+1} + \cdots +\alpha^j. $$
		Then, we have that $\alpha^{i+1} + \cdots +\alpha^j=0$, that is $\alpha^{i+1} (1+\alpha + \cdots +\alpha^{j-i-1})=0$, 
		which implies that
		$${\sum_{\ell=0}^{j-i-1} \alpha^{\ell}=0}.$$ 
		Thus, $\alpha$ is a zero of the polynomial $s(x)=x^{j-i}-1$ and hence $\alpha^{j-i}=1$ with $j-i<r$, which is absurd. 
		In this way, we see that all of the intermediate vertices in \eqref{eq: cycle} are all different, and hence we obtain a directed cycle of length $r$, with $r$ an odd prime, as claimed. \hfill {\tiny $\blacksquare$}
		
		By the claim, there exists a directed cycle of an odd prime length $r$  with $r\mid \frac{q-1}{k}$ and so $r\mid q-1$. 
		Now, by taking into account that $p$ is prime and $\gcd(p,q-1)=1$ then $\gcd(p,r)=1$. 
		Since $\G(k,q)$ has a directed cycle of length $p$ (adding $p$ times $1$ to the vertex $0$),
		therefore the period of $\G(k,q)$ is $d=1$. \hfill $\diamond$
		
		\subsection*{Case $2$: $q=p$ odd and $k<p-1$.}
		If $p=3$, the only $k$ satisfying $k \nmid \frac{p-1}2$ is $k=2$ and hence $\G(2,3)=\vec{C}_3$, which has period 3. Hence, we assume that $p>3$.
		In this case, notice that we have a directed cycle of length $p$ obtained by successively adding $1$ to the vertex $0$, that is
		$$ 0, \: 1,\:  2=1+1,\:  \ldots,\, p-1 = \underbrace{1+\cdots+1}_{p-1 \text{ times}}, \: p=\underbrace{1+\cdots+1}_{p\text{ times}} = 0$$ 
		(since we are in characteristic $p$).

		On the other hand, since $|R_k|>1$, there exists an element $x^k \in R_k$ with $x^k \ne 1$ such that $x^k \equiv t \pmod{p}$ and $1 < t < p-1$ (since $p-1$ is not in $R_k$ due to the non-symmetry of $R_k$). 
		Hence, we have the following directed cycle 
		$$0, \: t, \: t+1, \: t+2, \: \ldots, \: t+(p-t-1), \: t+(p-t)=0$$
		of length $p-t+1<p$.
		Hence, the period of $\G(k,p)$ is $d=\gcd(p-t+1,p)=1$. \hfill $\diamond$
		
		Therefore, $\G(k,q)$ has period $d=1$, with the only exception of $\G(p-1,p)$ which has period $d=p$, as asserted.
	\end{proof}

	We now illustrate the Case 2 in the above proof, showing that there can be cycles of length less than $p$.
	
	\begin{exam}
		The smallest directed Paley graph is $\G(2,7)=P(7) = Cay(\ff_7,(\ff_7^*)^2))$. 
		Since the non-zero squares in $\ff_7$ are $1, 2$ and $4$ we see that $P(7)$ has directed $3$-cycles, $4$-cycles, $6$-cycles and $7$-cycles.
		For instance, we have the three directed $7$-cycles 
		$$(0,1,2,3,4,5,6,0), \quad (0,2,4,6,1,3,5,0), \quad (0,4,1,5,2,6,3,0)$$ 
		obtained by respectively adding seven $1$'s, seven $2$'s and seven $4$'s to $0$. Also, we have the directed $6$-cycle $(0,2,3,4,5,6,0)$ obtained by first adding $2$ to $0$ and then adding five $1$'s, the directed $4$-cycle $(0,4,5,6,0)$ obtained by adding $4$ to $0$ and then three $1$'s, and the directed $3$-cycle $(0,4,6,0)$ obtained by adding $4$ to $0$, then $2$ and then $1$, all starting from $0$. The remaining directed cycles are obtained similarly by permuting the additions or starting from other vertices.
		Hence, the period is $d=\gcd\{3,4,6,7\}=1$. 
		\hfill $\diamond$
	\end{exam}

	\begin{rem} \label{rem: dir/undir K2s}
		Although $\G(k,2^m)$ is undirected, if one considers $\G(1,2) \simeq K_2$ as $\overset{\rightarrow}{K}_2 \cup  \overset{\leftarrow}{K}_2$, then $\G(1,2)$ has period $2$, and hence $\G(2^m-1,2^m)$ has period 2 for any $m \in \N$.
	\end{rem}

	As a direct consequence of Theorem 2.1 from \cite{PV18} and Proposition \ref{prop: realspec con} we 
	now show that every GP-graph has a trivial period with the only exception of the graphs of the form $\G(q-1,q)$ with $q$ odd, having period $p$.

	\begin{thm} \label{thm: GPperiods}
		Let $q=p^m$ with $p$ an odd prime and $m\in \N$ and let $k \in \N$ such that $k\mid q-1$. 
		If $\G(k,q)$ is directed then it has period $1$, with the only exception of $\G(q-1,q)$, which has period $p$.
	\end{thm}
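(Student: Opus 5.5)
The plan is to reduce to the strongly connected case treated in Proposition \ref{prop: realspec con}, using the connected-components decomposition of Theorem 2.1 in \cite{PV18}. Set $n=\frac{q-1}{k}$ and $a=ord_n(p)$. By that theorem, $\G(k,q)$ is the disjoint union of $p^{m-a}$ copies of $\G(k_a,p^a)$, where $k_a=\frac{p^a-1}{n}$. The GP-graph $\G(k_a,p^a)$ is strongly connected, because $ord_n(p)=a$ is exactly the connectivity criterion over $\ff_{p^a}$.

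The period is defined as a gcd of lengths of directed cycles. Every directed cycle of a disjoint union lies inside a single component, and all components are isomorphic. Hence the period of $\G(k,q)$ equals the period of $\G(k_a,p^a)$.

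Next I check that directedness passes to the component. The component $\G(k_a,p^a)$ is a subgraph of $\G(k,q)$, and by Lemma \ref{lem: no edges} the graph $\G(k,q)$ has no undirected edges. So the component has none either and is directed. Equivalently, $n$ is odd, which is the same criterion at level $p^a$, since the regularity degree $\frac{p^a-1}{k_a}=n$ is unchanged.

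Now Proposition \ref{prop: realspec con} applies to $\G(k_a,p^a)$. It gives period $1$ unless $\G(k_a,p^a)=\G(p-1,p)$, that is, unless $a=1$ and $k_a=p-1$. In that exceptional case $n=1$, so $k=q-1$, and the period is $p$. Conversely, if $k=q-1$ then $n=1$, $a=1$ and $k_a=p-1$. So $\G(q-1,q)$ is the union of $p^{m-1}$ copies of $\vec C_p$, as in Example \ref{exam: cycles}, and has period $p$.

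The only delicate point is the matching of parameters. I must confirm that $a=ord_n(p)$ divides $m$, so that $\ff_{p^a}\subseteq\ff_q$ and $k_a$ is an integer. I must also confirm that the identification $\G(k_a,p^a)=\G(p-1,p)$ happens exactly when $n=1$. Both follow directly from $n\mid p^a-1$ and the definition of $k_a$.
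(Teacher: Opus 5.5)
Your proposal is correct and follows the paper's proof. Both reduce via the component decomposition of Theorem 2.1 in \cite{PV18} to the strongly connected digraph $\G(k_a,p^a)$, apply Proposition \ref{prop: realspec con}, and observe that the exceptional case $a=1$, $k_a=p-1$ occurs exactly when $n=1$, i.e.\ $k=q-1$; your checks that directedness passes to the component ($n$ odd at both levels) and that $a\mid m$ just make explicit details the paper leaves implicit.
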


	\begin{proof}
		We know from Proposition 3.2 from \cite{PV18} that 
		$$ \G(q-1,q) = \vec{C}_p \cup \cdots \cup \vec{C}_p$$ 
		with $\vec{C}_p$ repeated $p^{m-1}$ times and hence
		$\G(q-1,q)$ has period $p$, as asserted.
		
		On the other hand, if $\frac{q-1}{k}\dagger q-1$ (i.e., if $\G(k,q)$ is strongly connected), the assertion is exactly Proposition \ref{prop: realspec con}.
		So, we can assume that $\frac{q-1}{k}$ is not a primitive divisor of $q-1$ (i.e., $\G(k,q)$ is not strongly connected). 
		Let $a$ be the minimal positive integer such that $n=\frac{q-1}{k}\mid p^{a}-1$.
		In this case, there exists $k_a =\frac{p^{a}-1}n$, 
		by Theorem 2.1 from \cite{PV18} we have that
		$$\G(k,q) \simeq \G(k_a,p^a) \cup \cdots \cup \G(k_a,p^a) \qquad \text{($p^{m-a}$ times)}.$$
		Since $\G(k_a,p^a)$ is strongly connected, Proposition \ref{prop: realspec con} 
		says that if $\G(k_a,p^a)$ has period $d>1$, then $a=1$ and $k_a=p-1$ with $p$ odd.
		Therefore, if $\G(k,q)$ has period $d>1$, then  $k=q-1$, as asserted.
	\end{proof}

	Summarizing the results of the section, 
	we have that the period of a directed GP-graph is given by
	\begin{equation} \label{eq: periods}
		d(\G(k,q)) = \begin{cases}
			1,  & \qquad \text{ if $k \ne q-1$}, \\[1mm]
			p,  & \qquad \text{ if $k  =  q-1$}.
		\end{cases}
	\end{equation}
	for $q=p^m$, with $p$ an odd prime.

	\subsection{Relation between periods and spectrum} \label{sec:5}
	Assume that $G$ is a directed graph with period $d$.
	It is known that $Spec(G)$, as a set of complex points, is invariant under a rotation about the origin by the angle $\frac{2\pi}d$ (see Theorem 2.1 in \cite{Br}).
	In particular, we deduce that if $G$ has real spectrum then it must has period $1$ or $2$, that is
	\begin{equation} \label{eq: d12}
		Spec(G) \subset \R \qquad \Rightarrow \qquad d(G) \in \{1,2\}.
	\end{equation}	
	
	Also, notice that $d(G)$ is even if and only if $G$ is bipartite, since in this case $G$ can only have directed cycles of even length 
	(see condition (B3') in Section 4 from \cite{PV18}, also Theorem 3.1 in \cite{Br}). 
	Hence, if $G$ is non-bipartite with real spectrum then it has period $1$. 
	Thus, in the case that interests to us, if $\G$ is a directed GP-graph, by Theorem 4.2 from \cite{PV18} and Remark \ref{rem: dir/undir K2s} we have that
	\begin{equation} \label{SpecR sii d=12}
		Spec(\G) \subset \mathbb{R} \quad \text{and} \quad \G \ne \G(2^m-1,2^m) \qquad \Rightarrow \qquad d(\G)=1,
	\end{equation}
	for $m \in \N$.
	
	It is a well-known fact that if $G$ is a $k$-regular graph, then $|\lambda|\le k$ for every eigenvalue $\lambda$ of $G$. 
	For GP-graphs, since $\frac{q-1}k$ is the regularity degree $\G(k,q)$, we have that $|\lambda| \le \frac{q-1}k$ for every eigenvalue $\lambda$ of $\G(k,q)$, that is 
	$$ Spec(\G(k,q)) \subset \bar{B}(0,\tfrac{q-1}k).$$
	
	In matrix theory, the \textit{index of imprimitivity} of an irreducible square matrix $A$, denoted $\delta(A)$, is the number of eigenvalues with absolute value equal to the spectral radius $\rho(A)$ of $A$. It is a well-known fact that the adjacency matrix $A_G$ of a directed graph $G$ is irreducible if and only if $G$ is strongly connected. 
	Moreover, in this case the period of $D$ coincides with the index of imprimitivity of $A_G$ (see Lemma 3.4.1 in \cite{BrRy},  probably first obtained here \cite{Ro}), 
	that is 
	\begin{equation} \label{eq: d=delta}
		d(G) = \delta(A_G).
	\end{equation} 
	In other words, an $n$-regular (strongly) connected directed graph $G$ has period $1$ if and only if 
	\begin{equation}\label{eq spec digraph}
		|\lambda|< n \qquad \text{for all $\lambda \in Spec (G) \smallsetminus\{n\}$.}
	\end{equation} 
	
	As a direct consequence of Theorem \ref{thm: GPperiods}, we obtain that the only GP-graph having complex non-real eigenvalues on the boundary of the closed ball, i.e.\@ on the circle $\tfrac{q-1}k \mathbb{S}^1$, is $\G(p-1,p)$ with $p$ an odd prime. 
	
	\begin{prop}  \label{prop: real spec S1}
		Let $q$ be a prime power and let $k\mid q-1$. Then, we have that 
		$$ Spec(\G(k,q)) \cap \tfrac{q-1}{k} \mathbb{S}^1  = \{\tfrac{q-1}{k}\} $$  
		except for $k=q-1$. 
	\end{prop}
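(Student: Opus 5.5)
The plan is to split according to whether $\G=\G(k,q)$ is undirected or directed, and in each case to reduce to a strongly connected (resp.\ connected) component. For this reduction I will use the decomposition of Theorem 2.1 from \cite{PV18}: with $n=\frac{q-1}k$, $a=ord_n(p)$ and $k_a=\frac{p^a-1}{n}$,
$$\G(k,q)\simeq \G(k_a,p^a)\sqcup\cdots\sqcup\G(k_a,p^a) \qquad (p^{m-a}\text{ times}).$$
Then $Spec(\G(k,q))$ and $Spec(\G(k_a,p^a))$ coincide as sets, and both graphs are $n$-regular. Moreover, $k=q-1$ holds exactly when $n=1$, which happens exactly when $k_a=p^a-1$ with $a=1$, i.e.\ $k_a=p-1$. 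So it suffices to treat the connected graph $\G'=\G(k_a,p^a)$. Since $\G'$ is $n$-regular, every eigenvalue $\lambda$ satisfies $|\lambda|\le n$, and $n$ itself is an eigenvalue. The statement thus amounts to showing that no other eigenvalue has modulus exactly $n$.

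\emph{Undirected case.} By Theorem \ref{thm: real spec} the spectrum is real, so the only candidates on the circle $n\mathbb{S}^1$ are $\pm n$. For a connected regular graph, $-n$ is an eigenvalue if and only if the graph is bipartite. By Theorem 4.2 of \cite{PV18}, the only bipartite GP-graphs are $\G(2^m-1,2^m)=\bigsqcup K_2$, which is exactly the case $k=q-1$ with $q$ even. Hence, for $k\neq q-1$, the intersection is $\{n\}$.

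\emph{Directed case.} Here $q$ is odd. The adjacency matrix of the strongly connected digraph $\G'$ is irreducible, so by \eqref{eq: d=delta} its index of imprimitivity equals its period. By Perron--Frobenius theory, the eigenvalues of modulus $\rho=n$ are exactly $n$ times the $\delta$-th roots of unity, each simple. By Theorem \ref{thm: GPperiods} (equivalently Proposition \ref{prop: realspec con} applied to $\G'$), the period is $1$ unless $\G'=\G(p-1,p)$, i.e.\ unless $k=q-1$. Therefore $\delta=1$, and the only eigenvalue on $n\mathbb{S}^1$ is $n$, which is the characterization \eqref{eq spec digraph}.

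Finally, I will check that the exception is genuine. For $k=q-1$ with $q$ odd we have $n=1$, and by \eqref{eq: Spec dir Cp} the spectrum consists of all $p$-th roots of unity, all lying on $\mathbb{S}^1$. For $q$ even the spectrum is $\{\pm1\}$. The only delicate point is making sure that passing to components preserves both the spectrum as a set and the hypothesis $k\neq q-1$; this follows from the correspondence $n=1\iff k=q-1$ noted above. Everything else is a direct invocation of earlier results.
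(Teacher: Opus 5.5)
Your proposal is correct and follows essentially the same route as the paper. Both arguments reduce to a (strongly) connected component via Theorem 2.1 of \cite{PV18}, then use non-bipartiteness to rule out $-n$ in the undirected case, and use index of imprimitivity $=$ period together with Proposition \ref{prop: realspec con} in the directed case. Your explicit check that $n=1 \iff k=q-1 \iff (a=1,\ k_a=p-1)$, and your verification that the exception is genuine for even $q$ as well, fill in details the paper leaves implicit.
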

	
	\begin{proof}
		Notice that if $k=q-1$ with $q=p^{m}$, by Theorem \ref{thm: GPperiods} and Theorem 4.2 from \cite{PV18} we have that 
		$$Spec(\G(q-1,q)) = \{ [1]^{p^{m-1}},[\zeta_p]^{p^{m-1}}, [\zeta_p^{2}]^{p^{m-1}}, \ldots, [\zeta_p^{p-1}]^{p^{m-1}} \},$$ 
		where $\zeta_p=e^{\frac{2\pi i}p}$. Therefore, the spectrum of $\G(q-1,q)$ intersects the punctured circle 
		$\mathbb{S}^1 \smallsetminus \{1\}$.
		
		Now assume that $k<q-1$ and let $\G=\G(k,q)$. 
		By Theorem 2.1 from \cite{PV18} it is enough to assume that $\G$ is connected in the wide sense (i.e., connected if $\G$ is undirected and strongly connected if $\G$ is directed), and hence we assume that this is the case.
		
		Assume first that $\G$ is undirected. 
		Hence $\G$ has real spectrum, and the only possible real eigenvalues on the circle $\tfrac{q-1}{k} \mathbb{S}^1$ are $\pm \tfrac{q-1}{k}$.
		We have that  
		$$ Spec(\G) \cap \big(\tfrac{q-1}{k} \mathbb{S}^1 \smallsetminus \{\tfrac{q-1}{k}\} \big) = \varnothing $$ 
		if and only if $\G$ is non-bipartite. By Proposition 4.1 from \cite{PV18}, this always happens except for the case $k=1$ and $p=2$.
		
		Now, assume that $\G$ is directed.
		By the definition of the index of imprimitivity for irreducible square matrices,
		if $\G$ is a strongly connected digraph, we have that 
		$$ \# \big( Spec (\G) \cap (\tfrac{q-1}{k} \mathbb{S}^1 \smallsetminus \{\tfrac{q-1}{k}\}) \big) = \delta(A_\G)-1 = d(\G)-1, $$
		where $\delta(A_\G)$ is the index of imprimitivity and $d(\G)$ is the period of $\G$ and we have used \eqref{eq: d=delta}. 
		In this case, 
		$ Spec (\G) \cap \big(\tfrac{q-1}{k} \mathbb{S}^1 \smallsetminus \{\tfrac{q-1}{k}\} \big) = \varnothing $ 
		if and only if $\G$ has period $1$.
		By Proposition \ref{prop: realspec con}, this always happens except for the case $k=p-1$ with $p$ an odd prime.  
		
		Thus, we have showed that if $\G$ is connected its spectrum only intersects the punctured circle $\tfrac{q-1}{k} \mathbb{S}^1 \smallsetminus \{\tfrac{q-1}{k}\}$ in the case $\G=\G(p-1,p)$ with $p$ prime. In the general case ($\G$ not connected), we obtain the same result for $\G=\G(q-1,q)$.
	\end{proof}

	\section{Application: Weak Waring numbers} \label{sec: Waring}
	As a quite unexpected application of our previous results, we now study weak Waring numbers over finite fields.

	We recall that, as a natural generalization of the classical Hilbert-Waring problem in $\N$, given $k\in \N$, the Waring number $g(k,q)$ over the finite field $\ff_q$ is the minimum $s\in \mathbb{N}$ (if exists) such that for any element $a \in \mathbb{F}_q$ there exist $x_1, \ldots, x_s \in \mathbb{F}_q$ with
	$$ a= x_1^k + \cdots + x_s^k. $$ 
	These numbers were studied by several people, see for instance the works of Cipra, Cochrane and Pinner, Moreno and Castro, Winterhoff et al, etc. For a complete list of results and references up to 2013 see Section 7.3.4 in Mullen-Panario's handbook of finite fields \cite{MP}.

	In a similar way, we have the following.
	\begin{defi} \label{def: wkq}
		Given $k\in \N$, we define the \textit{weak Waring number over finite fields} $w(k,q)$ 
		as the minimum $s\in \mathbb{N}$ (if exists) such that for any $a\in\mathbb{F}_{q}$ there exist 
		$x_1, \ldots, x_s \in \mathbb{F}_q$  
		such that
		$$ a= \pm x_1^k \pm \cdots \pm x_s^k,$$
		meaning that each term can have a plus or a minus sign independently. 
	\end{defi}

	It is clear from the definitions that if both numbers $w(k,q)$ and $g(k,q)$ exist, then 
	$$ w(k,q) \le g(k,q).$$
	Notice that, for instance, that $w(2,3)=1$  (since $2=-1$ in $\Z_3$) but $g(2,3)=2$, so the above inequality could be strict.

	\begin{rem}
		The weak Waring number was previously defined in the context of integers almost a century ago by Wright \cite{Wr}. He referred to it as an easier Waring problem. However, as the first paragraph of Borwein's chapter 12 of \cite{Bor} says, ``So to date, the easier Waring problem is not easier than Waring problem''.  
		Cochrane studied this number over $\Z_p$ first with Pinner in \cite{CP} and later over $\ff_q$ with Cipra in \cite{CiCo}. 
		They called it \textit{plus-minus Waring number} and denoted it by $\delta(k,p)$. They use the circle and the lattice method to study these numbers.
		
		In item ($e$) of Theorem 9 of \cite{CiCo} the authors obtained the following result (in our notations) relating the weak Waring number $w(k,q)$ with the Waring number $g(k,q)$. Namely, to get the neat relation:
		\begin{equation} \label{eq: Cipra wkq=gkq}
			w(k,q) = \begin{cases}
				g(k,q) & \qquad \text{if $q$ is even or if $|R_k|$ is even}, \\[1mm]
				g(\frac{k}{2},q) & \qquad \text{if $p$ is odd and $|R_k|$ is odd,} 
			\end{cases}
		\end{equation} 
		where $R_k = \{ x^k : x\in \ff_q^*\}$.
	\end{rem}

	More recently, the Waring numbers over finite fields in relation to GP-graphs and their diameters were studied by us in \cite{PV6} and \cite{PV7} (see also \cite{PV9} for the Waring problem over finite commutative local rings).
	
	The next result gives a simple condition for the existence of the weak Waring number $w(k,q)$, in terms of the structure of $\G(k,q)$. 
	In this case, we obtain the same result as Cochrane and Cipra in \eqref{eq: Cipra wkq=gkq}, but using a different method. 
	From the cyclic structure of $\ff_{q}^*$, it can be deduced that 
	$$ w(k,q)=w(k',q) \quad \text{and} \quad g(k,q)=g(k',q) \qquad \text{with} \qquad k'=\gcd(k,q-1), $$ 
	and so we will assume that $k\mid q-1$ when we deal with Waring numbers.
	However, there is a distinction between the directed and undirected cases.
	
	\begin{thm} \label{thm: wkq}
		Let $q=p^m$, with $p$ 
		prime and $m\in \N$, and let $k \in \N$ such that $k\mid q-1$. 
		The number $w(k,q)$ exists if and only if the number $g(k,q)$ exists, which in turn happens if and only if $\G(k,q)$ is connected. 
		In this case we have that	
		\begin{equation} \label{eq: wkq}
			w(k,q) = \begin{cases}
				g(k,q) & \qquad \text{if $q$ is even or if $q$ is odd and $v_{2}(k)<v_{2}(q-1)$}, \\[1mm]
				g(\frac{k}{2},q) & \qquad \text{if $q$ is odd and $v_{2}(k)=v_{2}(q-1)$.} \end{cases}
		\end{equation}
		where $v_2$ denotes the $2$-adic valuation. 
		In other words, 
		$w(k,q) = g(k,q)$ if $\G(k,q)$ is undirected or $w(k,q) = g(\frac k2,q)$ if $\G(k,q)$ is directed. 
	\end{thm}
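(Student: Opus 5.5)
The plan is to interpret $w(k,q)$ as a diameter, in the same way $g(k,q)$ is the diameter of $\G(k,q)$, and then apply Theorem~\ref{thm: nature spec v2}. First recall that $g(k,q)$ exists if and only if $\G(k,q)=Cay(\ff_q,R_k)$ is (strongly) connected. In that case $g(k,q)={\rm diam}(\G(k,q))$: an element $a$ is a sum of $s$ nonzero $k$-th powers exactly when there is a directed walk of length $s$ from $0$ to $a$. Zero summands $x_i=0$ can be used to pad shorter representations, so ``exactly $s$'' and ``at most $s$'' agree. By vertex transitivity of the Cayley graph it suffices to measure distances from $0$.

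In the same way, $a=\pm x_1^k\pm\cdots\pm x_s^k$ with the $x_i$ nonzero means exactly that $a$ is reached from $0$ by a walk of length $s$ in the symmetrized graph $W(k,q)=Cay(\ff_q,\check R_k)$, where $\check R_k=R_k\cup(-R_k)$. Hence $w(k,q)$ exists if and only if $W(k,q)$ is connected, and in that case $w(k,q)={\rm diam}\,W(k,q)$.

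Next I show that $W(k,q)$ is connected if and only if $\G(k,q)$ is. One direction is trivial, since $R_k\subset\check R_k$. For the converse, the additive subgroup generated by $\check R_k$ equals the one generated by $R_k$. For Cayley digraphs on a finite group, strong connectivity is equivalent to $R_k$ generating the group: every $-x$ with $x\in R_k$ equals $(p-1)x$, a positive multiple, so additive closure and the subgroup generated coincide. This gives the existence equivalence $w(k,q)$ exists $\Leftrightarrow$ $g(k,q)$ exists $\Leftrightarrow$ $\G(k,q)$ connected.

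Then I split into cases using Theorem~\ref{thm: nature spec v2}.

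\emph{Undirected case.} If $q$ is even, or $q$ is odd with $v_2(k)<v_2(q-1)$, then $\G(k,q)$ is undirected. So $R_k=-R_k$, hence $\check R_k=R_k$ and $W(k,q)=\G(k,q)$, giving $w(k,q)=g(k,q)$.

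\emph{Directed case.} If $q$ is odd with $v_2(k)=v_2(q-1)$, then by \eqref{eq: Rk Rk2} we have $\check R_k=R_{k/2}$, so $W(k,q)=\G(\tfrac k2,q)$ and $w(k,q)={\rm diam}\,\G(\tfrac k2,q)=g(\tfrac k2,q)$. Connectivity of $\G(\tfrac k2,q)$ follows from that of $\G(k,q)$, since it contains it as a subgraph.

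The step I expect to need the most care is the diameter identification with signs. I must make sure that allowing zero summands, and allowing each sign to be chosen independently, corresponds exactly to walks of length at most $s$ in $Cay(\ff_q,\check R_k)$. I must also make sure the trivial element $a=0$ does not spoil the minimum; this matters in degenerate cases such as $q=2$ or $R_k$ tiny. Otherwise the argument is a direct assembly of Theorem~\ref{thm: nature spec v2} and the decomposition \eqref{eq: G dirs}.
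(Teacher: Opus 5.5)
Your proof is correct and rests on the same key fact as the paper's: $\check R_k=R_k$ in the undirected case and $\check R_k=R_{k/2}$ in the directed case (Theorem~\ref{thm: nature spec v2}). The surrounding argument, however, is organized differently.

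The paper works at the level of sums. It uses $R_{k/2}=R_k\cup(-R_k)$ to rewrite signed sums of $k$-th powers directly as sums of $\frac k2$-th powers, giving $w(k,q)=g(\frac k2,q)$. It cites Theorem 3.3 of \cite{PV6} for the equivalence between existence of Waring numbers and connectivity. It then shows that $\G(\frac k2,q)$ is connected if and only if $\G(k,q)$ is, via the spectral identity \eqref{eq: Spec Gk2q = 2Re Spec Gkq}, noting that the principal eigenvalue has the same multiplicity in both graphs. The identification $w(k,q)={\rm diam}(W(k,q))$ appears only afterwards, as a corollary.

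You establish the walk/diameter interpretation first, which re-derives the cited result of \cite{PV6}. You then replace the spectral step by the elementary observation that $-x=(p-1)x$ in characteristic $p$. Hence the additive monoid generated by $R_k$ already equals the subgroup generated by $\check R_k$.

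Your route has several advantages. It is self-contained and needs no fact about eigenvalue multiplicities counting strong components of a digraph. It handles existence uniformly in both cases, shows that weak and strong connectivity coincide for these Cayley digraphs, and gives the diameter corollary for free. The paper's route is shorter given its earlier results, and it is deliberately framed as an application of the spectral relation between $\G(k,q)$ and $\G(\frac k2,q)$.
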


	\begin{proof}
		If $q$ is even or else if $q$ is odd and $v_{2}(k)<v_{2}(q-1)$, we have that the graph $\G(k,q)$ is undirected. 
		This implies that $-1 \in R_{k}$ and hence 
		$$w(k,q)=g(k,q)$$ 
		in this case. Moreover, Theorem 3.3 from \cite{PV6} implies that $w(k,q)$ exists if and only if $\G(k,q)$ is connected.
		
		Now, assume that $q$ is odd and $v_{2}(k)=v_{2}(q-1)$. By \eqref{eq: G dirs} we have that $\G(k,q)$ is directed and 
		$$\G(\tfrac k2,q) = \overset{_{\rightarrow}}{\G}(k,q) \cup \overset{_{\leftarrow}}{\G}(k,q).$$
		Moreover, in this case $R_{k/2}= R_{k}\cup (- R_{k})$. This implies that if $a\in \mathbb{F}_{q}$ satisfies
		$$a=a_1 x_{1}^{k} + \cdots + a_s x_{s}^{k}$$ 
		with $a_{i} \in \{\pm 1\}$ and $x_i \in \ff_q$ for $1 \le i \le s$, then $a_i x_{i}^{k}= y_{i}^{k/2}$ for some $y_i\in \ff_{q}$ for all $i=1,\ldots,s$ because in this case $-1\in R_{k/2}$ and hence
		\begin{equation}\label{eq: eWn}
			a = {y_1}^{\frac k2} + \cdots + {y_s}^{\frac k2}.
		\end{equation} 
		In particular, $w(k,q)$ exists if and only if $g(\frac{k}{2},q)$ exists. By Theorem 3.3 from \cite{PV6}, the Waring number $g(\frac{k}{2},q)$ exists if and only if $\G(\frac{k}{2},q)$ is connected. By \eqref{eq: Spec Gk2q = 2Re Spec Gkq}, this happens if and only if $\G(k,q)$ is connected, since the multiplicity of the trivial eigenvalue is the same for both graphs. Finally, the equation \eqref{eq: eWn} implies that $w(k,q)=g(\frac{k}{2},q)$, as asserted.
	\end{proof}

	\begin{rem} 
		Since $|R_k|=\frac{q-1}k$, it is clear that the conditions in \eqref{eq: Cipra wkq=gkq} and in \eqref{eq: wkq} are the same.
	\end{rem}

	In \cite{PV6}, we have shown that the Waring number $g(k,q)$ is the diameter of $\G(k,q)$. 
	For this reason we could refer to these GP-graphs as \textit{Waring graphs}.
	Similarly, as a direct consequence  of our previous results, 
	we next show that the weak Waring number $w(k,q)$ is the diameter of the Cayley graph 
	$$W(k,q)=Cay(\ff_q, \check{R}_k), \qquad 
	\check{R}_k=\{\pm x^k : x \in \ff_q \} = R_k \cup (-R_k).$$
	Following the analogy, we can call these graphs \textit{weak Waring graphs}.
	
	\begin{coro}
		If $k\mid q-1$, in the previous notations, we have 
		\begin{equation} \label{eq: wkq diam}
			w(k,q) = {\rm diam}(W(k,q)).
		\end{equation}
	\end{coro}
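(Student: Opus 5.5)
The plan is to proceed directly from the definition of the diameter of a Cayley graph, mirroring the argument in \cite{PV6} that $g(k,q)={\rm diam}(\G(k,q))$. The key observation is that $W(k,q)=Cay(\ff_q,\check R_k)$ is undirected, since $\check R_k=R_k\cup(-R_k)$ is symmetric. It is also vertex-transitive, because translations $x\mapsto x+b$ are automorphisms. Hence ${\rm diam}(W(k,q))$ equals the maximum over $a\in\ff_q$ of the distance $d(0,a)$. A walk of length $s$ from $0$ to $a$ is exactly a sequence of $s$ steps in $\check R_k$ summing to $a$. So $d(0,a)\le s$ if and only if $a=\epsilon_1x_1^k+\cdots+\epsilon_s x_s^k$ with $\epsilon_i\in\{\pm1\}$ and $x_i\in\ff_q^*$. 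The two sides are infinite (or the weak Waring number does not exist) simultaneously, namely exactly when $W(k,q)$ is disconnected.

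Next, the $x_i=0$ issue must be handled. Definition \ref{def: wkq} allows $x_i\in\ff_q$, so $a$ is a signed sum of at most $s$ nonzero $k$-th powers. I will therefore show that $w(k,q)$ equals the least $s$ such that every $a$ is a signed sum of \emph{at most} $s$ elements of $\check R_k$. This least $s$ is precisely $\max_a d(0,a)$, with $d(0,0)=0$ handled by taking all $x_i=0$. Consequently $w(k,q)=\max_a d(0,a)={\rm diam}(W(k,q))$.

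Alternatively, and more in the spirit of the section, I would split into the two cases of Theorem \ref{thm: wkq}. If $\G(k,q)$ is undirected, then $\check R_k=R_k$, so $W(k,q)=\G(k,q)$ and $w(k,q)=g(k,q)={\rm diam}(\G(k,q))$ by \cite{PV6}. If $\G(k,q)$ is directed, then \eqref{eq: Rk Rk2} gives $\check R_k=R_{k/2}$, so $W(k,q)=\G(\tfrac k2,q)$ and $w(k,q)=g(\tfrac k2,q)={\rm diam}(\G(\tfrac k2,q))$, again by \cite{PV6} together with Theorem \ref{thm: wkq}.

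The only delicate point is the bookkeeping between "exactly $s$" and "at most $s$" summands, and the convention for the diameter when the graph is disconnected. These match the existence statement of Theorem \ref{thm: wkq}, since $W(k,q)$ is connected exactly when $\G(k,q)$ (respectively $\G(\tfrac k2,q)$) is connected.
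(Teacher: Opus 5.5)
Your case-split alternative is precisely the paper's proof. In the undirected case $\check R_k=R_k$, so $W(k,q)=\G(k,q)$; in the directed case $\check R_k=R_{k/2}$ by \eqref{eq: Rk Rk2}, so $W(k,q)=\G(\frac k2,q)$; then one applies Theorem~\ref{thm: wkq} and $g={\rm diam}$ from \cite{PV6}.

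Your primary, direct route is different and also correct. Translations make ${\rm diam}(W(k,q))=\max_a d(0,a)$. Since zero summands can be added or dropped, $a=\pm x_1^k\pm\cdots\pm x_s^k$ with $x_i\in\ff_q$ holds if and only if $d(0,a)\le s$. Hence $w(k,q)=\max_a d(0,a)$, and nonexistence of $w(k,q)$ matches disconnectedness of $W(k,q)$. The ``if and only if'' in your first paragraph, phrased with exactly $s$ nonzero summands, is false as written (e.g.\ $a=0$, $s=1$). The ``at most $s$'' formulation in your second paragraph is the correct one, and the argument rests on it.

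The two routes trade off as follows. The direct argument is self-contained: it uses nothing about directedness, the identity $\check R_k=R_{k/2}$, Theorem~\ref{thm: wkq}, or \cite{PV6}. It is just the proof of $g(k,q)={\rm diam}(\G(k,q))$ rerun with the symmetric set $\check R_k$. The paper's route instead recycles those results, and in exchange identifies $W(k,q)$ as a concrete GP-graph, which is what makes the corollary useful for computing $w(k,q)$. Conversely, your direct argument combined with that identification and \cite{PV6} gives an independent proof of the formula in Theorem~\ref{thm: wkq}.
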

	
	\begin{proof}
		If $q$ is even or if $q$ is odd and $v_{2}(k)<v_{2}(q-1)$ then $\check{R}_{k}=R_{k}$ and so $W(k,q)=\Gamma(k,q)$. Hence, by the above theorem and Theorem 3.3 from \cite{PV6} we have 
		$$ w(k,q) = g(k,q) = {\rm diam}(\G(k,q)) = {\rm diam}(W(k,q)).$$ 
		
		On the other hand, if $q$ is odd and $v_{2}(k)=v_{2}(q-1)$ then $W(k,q)=\Gamma(\frac{k}{2},q)$, by Theorem~\ref{thm: nature spec v2}. By the previous theorem and Theorem 3.3 from \cite{PV6} we obtain that
		$$
		w(k,q)=g(\tfrac{k}{2},q) = {\rm diam}(\G(\tfrac{k}{2},q)) = {\rm diam} (W(k,q)).
		$$
		Therefore, in any case we get expression \eqref{eq: wkq diam} as we wanted to see.
	\end{proof}

	\begin{rem}
		Notice that if we take $p$ an odd prime and $k=p-1$, then 
		$$ w(p-1,p)=g(\tfrac{p-1}{2},p)=\tfrac{p-1}{2}<p-1 =g(p-1,p), $$
		and hence the weak Waring number and the Waring number not necessarily coincide.
		For instance, in $\Z_5$, we have $1^2=4^2=1$, $2^2=3^2=4$, and hence $1^4=2^4=3^4=4^4=1$.
		We have $w(4,5)=g(2,5)=2$ while $g(4,5)=4$. In fact, 
		\begin{align*}
			& 1=1^4+0^4=1^2+0^2, 			&& 1= 1^4, \\ 
			& 2=1^4+1^4=1^2+1^2,    		&& 2= 1^4+1^4, \\ 	
			& 3=-1^4-1^4=2^2+2^2, 			&& 3= 1^4+1^4+1^4, \\ 	
			& 4=-1^4+0^4=2^2+0^2, 			&& 4= 1^4+1^4+1^4+1^4. 
		\end{align*}
	\end{rem}

	As a direct consequence of the previous theorem and Corollary \ref{coro: wkqs exp} we get the following rephrasing of the theorem written in more precise terms.  In particular, it shows that the weak Waring numbers over binary finite fields always coincide with the Waring number (a result that is clear working in characteristic 2).
	\begin{coro} \label{coro: wkg=gkq}
		In the previous notations, we have. 
		
		\noindent $(a)$ 
		For any fixed $m\in\N$ and any $k\mid 2^m-1$ it holds 
		$$w(k,2^m)=g(k,2^m)$$ 
		if these numbers exist. 
		
		\noindent $(b)$ 
		If $q=2^tr+1$, with $r$ odd, for any $0<t'\le t$ and $s\mid r$ we have 
		\begin{equation} \label{eq: wkqs exp}
			w(2^ts,q) = \begin{cases}
				g(2^ts,q) & \qquad \text{if $t'=t$}, \\[1mm]
				g(2^{t-1}s,q) & \qquad \text{if $t'<t$},
			\end{cases} 
		\end{equation}
		when all these numbers exist.  
	\end{coro}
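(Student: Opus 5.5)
The plan is to read both items off Theorem~\ref{thm: wkq}, using the parametrization of the divisors of $q-1$ from Corollary~\ref{coro: wkqs exp}. No new graph-theoretic input should be needed beyond what has already been proved.

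\emph{Item $(a)$.} Here $q=2^m$, so Theorem~\ref{thm: wkq} falls directly into its first case and gives $w(k,2^m)=g(k,2^m)$ whenever $\G(k,2^m)$ is connected. By the same theorem, this is exactly the condition for either number to exist. For a self-contained check, one can also argue directly: in characteristic $2$ we have $-1=1$, so $-R_k=R_k$, and therefore every expression $\pm x_1^k\pm\cdots\pm x_s^k$ is already a plain sum of $k$-th powers.

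\emph{Item $(b)$.} Write $q-1=2^t r$ with $r$ odd. As in the proof of Corollary~\ref{coro: wkqs exp}, every divisor of $q-1$ has the form $k=2^{t'}s$ with $0\le t'\le t$ and $s\mid r$. For such a $k$ we have $v_2(k)=t'$ and $v_2(q-1)=t$. Theorem~\ref{thm: nature spec v2} then says that $\G(k,q)$ is directed exactly when $t'=t>0$, and undirected otherwise. Feeding this into \eqref{eq: wkq} gives the two cases:
\begin{itemize}
\item if $t'<t$, the graph is undirected and $w(2^{t'}s,q)=g(2^{t'}s,q)$;
\item if $t'=t$, the graph is directed and $w(2^{t}s,q)=g(\tfrac{k}{2},q)=g(2^{t-1}s,q)$.
\end{itemize}
The existence clause transfers unchanged, because Theorem~\ref{thm: wkq} shows that $w(k,q)$ exists if and only if $g(k,q)$ exists, if and only if $\G(k,q)$ is connected. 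In the directed case, this is also equivalent to $g(\tfrac k2,q)$ existing, since $\G(\tfrac k2,q)$ and $\G(k,q)$ have the same multiplicity of the principal eigenvalue by \eqref{eq: Spec Gk2q = 2Re Spec Gkq}.

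The only real obstacle is bookkeeping in the statement of \eqref{eq: wkqs exp}. As printed, the left-hand side is $w(2^ts,q)$ and the case labels look interchanged relative to Theorem~\ref{thm: wkq}. I would state and prove the intended form
$$ w(2^{t'}s,q)=\begin{cases} g(2^{t'}s,q) & \text{if } t'<t,\\[1mm] g(2^{t-1}s,q) & \text{if } t'=t,\end{cases} $$
which is what the argument above yields. I would also remark that the undirected case $t'=0$ (that is, $k$ odd) is covered by the same argument.
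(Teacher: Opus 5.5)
Your proposal is correct and follows the paper's own route: the paper presents this corollary as a direct consequence of Theorem~\ref{thm: wkq}, combined with the divisor parametrization $k=2^{t'}s$ from Corollary~\ref{coro: wkqs exp}, exactly as you argue. Your corrected reading of \eqref{eq: wkqs exp} is the right one: $w(2^{t'}s,q)$ on the left, equal to $g(2^{t'}s,q)$ if $t'<t$ and to $g(2^{t-1}s,q)$ if $t'=t$. The printed version would give $w(4,5)=g(4,5)=4$, contradicting the paper's own computation $w(4,5)=2$.
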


	\begin{rem}
		As for the Waring number, we can define the weak Waring function from weak Waring pairs. 
		We say that a pair of positive integers $(k,q)$, such that $q$ is a prime power and $k\mid q-1$, is a \textit{weak Waring pair} if $w(k,q)$ exists. We denote by $\mathbb{W}_w$ the set of all such pairs. Consider the \textit{weak Waring function} sending every weak Waring pair to the corresponding weak Waring number, i.e.\@
		\begin{equation} \label{g-function}
			w: \mathbb{W}_w \subset \N \times \N \rightarrow \N, \qquad (k,q) \mapsto w(k,q).
		\end{equation}
		
		Since any positive integer is the Waring number of some pair $(k,q)$ with $-1\in R_k$ (see Proposition 4.7 from \cite{PV7}), we obtain that every positive integer number is the weak Waring number $w(k,q)$ for the pair $(k,q)$, in some (generically non-prime) finite field. 
	\end{rem}

	We now give a reduction formula for the weak Waring numbers, similar to the existing one for Waring numbers (see Theorem 2.4 in \cite{PV7}). We recall that an integer $k$ is a primitive divisor of an integer of the form $p^m-1$ with $p$ prime if $k\mid p^m-1$ and $k \nmid p^t-1$ for any $1\le t < m$. We denote this by 
	$$k \dagger p^m-1.$$
	
	\begin{thm} \label{thm: reduction wkq}
		Let $p$ be a prime and $a, b, c$ positive integers such that $c \dagger p^a-1$ and $bc \dagger p^{ab}-1$. Then, we have 
		\begin{equation} \label{eq: gk=bgu gen}
			w(\tfrac{p^{ab}-1}{bc},p^{ab}) = b w(\tfrac{p^a-1}{c}, p^{a}).
		\end{equation} 
	\end{thm}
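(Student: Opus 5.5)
The plan is to reduce the weak Waring numbers on both sides of the identity to classical Waring numbers via Theorem \ref{thm: wkq}, and then invoke the known reduction formula for Waring numbers, Theorem 2.4 in \cite{PV7}. That formula states that $g(\frac{p^{ab}-1}{bc},p^{ab}) = b\, g(\frac{p^a-1}{c},p^a)$ under exactly the hypotheses $c \dagger p^a-1$ and $bc \dagger p^{ab}-1$. Put $K=\frac{p^{ab}-1}{bc}$, $Q=p^{ab}$, $k=\frac{p^a-1}{c}$ and $q=p^a$.

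First, the primitivity hypotheses say that $\frac{Q-1}{K}=bc$ is a primitive divisor of $Q-1$ and $\frac{q-1}{k}=c$ is a primitive divisor of $q-1$. Hence both $\G(K,Q)$ and $\G(k,q)$ are (strongly) connected. By Theorem \ref{thm: wkq}, both weak Waring numbers therefore exist.

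Second, I determine directedness on both sides. The sizes of the connection sets are $|R_K|=bc$ and $|R_k|=c$. If $p=2$, both graphs are undirected, so $w=g$ on both sides and the result is exactly Theorem 2.4 of \cite{PV7}. For $p$ odd, recall that a GP-graph is directed if and only if $|R|$ is odd; this is the equivalence of conditions noted after Theorem \ref{thm: wkq}. There are three cases.

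\begin{enumerate}[(i)]
\item If $c$ is even, then $bc$ is even, both graphs are undirected, and the identity follows directly from the $g$-reduction.
\item If $c$ and $b$ are both odd, both graphs are directed. Then $w(K,Q)=g(K/2,Q)$ and $w(k,q)=g(k/2,q)$. Here $\frac{Q-1}{K/2}=2bc$ and $\frac{q-1}{k/2}=2c$, so I will apply the $g$-reduction with $c$ replaced by $2c$.
\item If $c$ is odd and $b$ is even, $\G(K,Q)$ is undirected but $\G(k,q)$ is directed. This case needs separate treatment.
\end{enumerate}

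The main obstacle is checking the primitive-divisor hypotheses needed to apply Theorem 2.4 of \cite{PV7} after halving $k$, namely $2c \dagger p^a-1$ and $2bc \dagger p^{ab}-1$. Any divisor of $p^t-1$ with $p$ odd is automatically compatible with a factor $2$, since $2\mid p^t-1$. So if $2c\mid p^t-1$ with $t<a$, then $c\mid p^t-1$, contradicting $c\dagger p^a-1$. The same argument applies to $2bc$. Divisibility of $p^a-1$ by $2c$ holds because $c$ is odd and $p^a-1$ is even. This settles case (ii).

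For case (iii), I would first try to show it cannot occur. The hypotheses say that $p$ has order exactly $a$ modulo $c$ and exactly $ab$ modulo $bc$. With $b$ even, $v_2(bc)=v_2(b)$, and I would examine whether $v_2(p^{ab}-1)$ forces a contradiction through the lifting-the-exponent lemma, comparing the order of $p$ modulo $2^{v_2(b)}$ with $ab$. If the case can occur, I would instead argue directly. Since $\check R_k=R_{k/2}$ and $W(k,q)=\G(k/2,q)$, applying the $g$-reduction with $2c$ in place of $c$ requires $2bc$ or a suitable divisor to be primitive. If that fails, I would fall back on the Cartesian-product description $\G(K,Q)\simeq \G(k,q)^{\Box b}$ together with its weak analogue $W(K,Q)\simeq W(k,q)^{\Box b}$, and use the fact that diameters add under Cartesian products. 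This is the step I expect to require the most care.
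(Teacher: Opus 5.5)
Your cases (i) and (ii) are sound and match the paper's argument. The paper's directed case is your case (ii), with the same check that $2c\dagger p^a-1$ and $2bc\dagger p^{ab}-1$ when $bc$ is odd.

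The gap is case (iii) ($p$ odd, $c$ odd, $b$ even), which you leave unresolved. In that case $\G(K,Q)$ would be undirected and $\G(k,q)$ directed. Theorem~\ref{thm: wkq} and the $g$-reduction then give $w(K,Q)=g(K,Q)=b\,g(k,q)$, while $b\,w(k,q)=b\,g(\frac k2,q)$, and $g(\frac k2,q)$ can be strictly smaller than $g(k,q)$. So nothing you have written proves the identity there, and the proof is complete only once you show this case is empty. Your proposed 2-adic/LTE check does not do this as stated. It is true that $v_2(\mathrm{ord}_{2^{v_2(b)}}(p))<v_2(b)\le v_2(ab)$. But the missing power of $2$ in $ab=\mathrm{ord}_{bc}(p)$ could a priori come from the order of $p$ modulo the odd part of $bc$, so looking at $2^{v_2(b)}$ alone gives no contradiction.

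The missing observation is that the hypotheses force every prime divisor of $b$ to divide $c$. Since $\mathrm{ord}_{bc}(p)=ab$, we get $\mathrm{ord}_{bc}(p^a)=b$. Also $p^a\equiv 1\pmod c$, so $p^a$ lies in the kernel $U$ of the surjective reduction map $(\Z/bc\Z)^*\to(\Z/c\Z)^*$. This kernel has order $\varphi(bc)/\varphi(c)=b\prod_{r\mid b,\ r\nmid c}(1-\frac1r)$. Since $b$ divides $|U|$, the product must equal $1$. In particular, $b$ even forces $c$ even, so case (iii) never occurs.

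Alternatively, the Cartesian product decomposition $\G(K,Q)\simeq\G(k,q)^{\Box b}$ of \cite[Proposition 2.3]{PV7}, which the paper invokes, shows directly that $\G(K,Q)$ is directed if and only if $\G(k,q)$ is. The paper's explicit $2$-adic argument only covers the easy implication ($c$ even $\Rightarrow$ $bc$ even). It is this product fact that supplies the converse needed for its ``undirected case''.

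Your fallback $W(K,Q)\simeq W(k,q)^{\Box b}$ would also work. Symmetrization commutes with Cartesian products, diameters add, and $w(k,q)=\mathrm{diam}(W(k,q))$ by the corollary after Theorem~\ref{thm: wkq}. It would even prove the theorem uniformly, with no case split. But you only hold it in reserve and never carry it out, so as written case (iii) is not handled.
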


	\begin{proof}
		It is enough to assume that $\G(\tfrac{p^{ab}-1}{bc},p^{ab})$ is directed. Indeed, the undirected case 
		(i.e., $q$ even or $q$ odd and $v_2(\tfrac{p^{ab}-1}{bc})<v_2(q-1)$) follows directly from Theorem 2.4 from \cite{PV7} and Theorem \ref{thm: wkq}. 
		
		In the directed case, that is when $q$ is odd and $v_2(\tfrac{p^{ab}-1}{bc})=v_2(q-1)$, Theorem \ref{thm: wkq} implies that 
		\begin{equation} \label{eq: aux fla 1}
			w(\tfrac{p^{ab}-1}{bc},p^{ab}) = g(\tfrac{p^{ab}-1}{2bc},p^{ab}).
		\end{equation}		
		The hypotheses on primitive divisibility clearly imply that $2c \dagger p^a-1$ and $2bc \dagger p^{ab}-1$ also. Therefore, we can apply the reduction formula for Waring numbers (see Theorem~2.4 in \cite{PV7}) and thus we have 
		\begin{equation} \label{eq: aux fla 2}
			g(\tfrac{p^{ab}-1}{2bc},p^{ab})=b g(\tfrac{p^a-1}{2c}, p^{a}).
		\end{equation}
		
		Now, notice that the graph $\G(\tfrac{p^a-1}{c}, p^{a})$ is directed. 
		Indeed, one can invoke the fact that  $\G(\tfrac{p^{ab}-1}{bc},p^{ab})$ is the Cartesian product of $b$ copies of $\G(\tfrac{p^a-1}{c}, p^{a})$ \cite[Proposition 2.3]{PV7}, and the Cartesian product of two graphs is directed if and only if each factor is directed. However, we can give a direct proof of this in terms of the $2$-adic valuation. We need to show that 		
		$$ v_2(\tfrac{p^{ab}-1}{bc})=v_2(p^{ab}-1) 
		\quad \Rightarrow \quad v_2(\tfrac{p^{a}-1}{c}) = v_2(p^{a}-1).$$ 
		Suppose that $v_2(\frac{p^{a}-1}{c}) < v_2(p^{a}-1)$, hence $\frac{p^a-1}{c} \mid \frac{p^a-1}2$, that is $\frac{p^a-1}{c}=\frac{p^a-1}{2} t$ for some $t$. Therefore $c=2t$ and thus $bc$ is even. This implies that $v_2(\frac{p^{ab}-1}{bc}) < v_2(p^{ab}-1)$, which is a contradiction.  
		Since $\G(\tfrac{p^a-1}{c}, p^{a})$	is directed, by \eqref{eq: wkq} again we have that
		1\begin{equation} \label{eq: aux fla 3}
			w(\tfrac{p^a-1}{c}, p^{a})=g(\tfrac{p^a-1}{2c}, p^{a})
		\end{equation}
		Putting together the expressions \eqref{eq: aux fla 1}, \eqref{eq: aux fla 2} and \eqref{eq: aux fla 3} we obtain the reduction formula \eqref{eq: gk=bgu gen} for weak Waring numbers. 
	\end{proof}

	\begin{rem}
		We point out that the reduction formula for weak Waring numbers obtained in Theorem \ref{thm: reduction wkq} together with the relation between weak Waring numbers $w(k,q)$ and Waring numbers $g(k,q)$ and $g(\frac k2,q)$, allow to obtain several explicit expressions and formulas for the numbers $w(k,q)$, in the same vein as the ones obtained in \cite{PV6} and \cite{PV7}. In fact, almost all the results in \cite{PV7} can be adapted for $w(k,q)$.
		However, we will not do this here.
	\end{rem}

	\section{Some worked examples over fixed fields} \label{sec: examples}
	In this section, we consider some GP-graphs over some small fixed finite fields to illustrate the results of the previous sections. 
	We will study the nature of their spectra and we will compute the associated (weak) Waring numbers.
	
	Namely, for $q=5^2, 7^2, 3^4$ and $2^8$, we consider all the GP-graphs $\G(k,q)$. We will identify them as known graphs (when possible) and give their decompositions into isomorphic copies of smaller GP-graphs in the disconnected cases.

	As before, $K_q$, $\mathcal{P}_q$, $L_{q,q}$, $C_q$ respectively denote the complete, Paley, lattice and cycle graphs (and $\vec{\mathcal{P}}_q$, $\vec{C}_q$ the oriented Paley and cycle graphs). The symbol $\sqcup$ denotes disjoint union of graphs and the graphs are shown as the disjoint union of their connected components. Since $q$ will be fixed, to determine the nature of the spectrum (integral, real, or complex) we just check conditions \eqref{eq: nature} and \eqref{eq: nature2}. 
	
	Furthermore, the diameter of the connected graphs $\G(k,q)$ gives the Waring number $g(k,q)$, from which we also obtain the weak Waring number $w(k,q)$. 
	We recall that 
	\begin{equation} \label{eq: diams}
		{\rm diam}(K_q)=1, \qquad {\rm diam}(\mathcal{P}_q) = {\rm diam}(L_{q,q})=2, \qquad {\rm diam}(\G_{srg})=2
	\end{equation}
	where $\G_{srg}$ stands for any strongly regular graph, while ${\rm diam}(C_n)=[\frac n2]$. 
	
	\begin{exam} \label{exam: q=25}
		Since $5^2-1$ has eight divisors, there are eight GP-graphs over $\ff_{5^2}$. From Example 3.6 in \cite{PV18}, they are 
		\begin{align*}
			\G(1,25)  & = K_{25}, \\ 
			\G(2,25)  & = \mathcal{P}_{25}, \\ 
			\G(3,25)  & = srg(25,8,3,2) = L_{5,5}, \\ 
			\G(4,25)  & = \text{undirected, connected, $6$-regular (not srg)}, \\ 
			\G(6,25)  & = K_5 \sqcup K_5 \sqcup K_5 \sqcup K_5 \sqcup K_5, \\ 
			\G(8,25)  & = \text{directed, connected, $3$-regular (not srg)}, \\
			\G(12,25) & = C_5 \sqcup C_5 \sqcup C_5 \sqcup C_5 \sqcup C_5 = 
			\mathcal{P}_5 \sqcup \mathcal{P}_5 \sqcup \mathcal{P}_5 \sqcup \mathcal{P}_5 \sqcup \mathcal{P}_5, \\ 
			\G(24,25) & = \vec{C}_5 \sqcup \vec{C}_5 \sqcup \vec{C}_5 \sqcup \vec{C}_5 \sqcup \vec{C}_5. 
		\end{align*}	
		
		\noindent \textit{Nature of the spectrum.} 
		We see at glance that the graphs $\G(8,25) = \G(2^3,25)$ and $\G(24,25) = \G(2^3\cdot 3,25)$ have complex spectrum since they are directed (while the rest have real spectra). 
		Alternatively, note that 	
		$$ q-1=24=2^3 \cdot 3$$
		and use Theorem \ref{thm: nature spec v2} or Corollary \ref{coro: wkqs exp}.	
		
		Moreover, we see that $\G(1,25)$, $\G(2,25)$, $\G(3,25)$ and $\G(6,25)$ have integral spectrum since $k\mid \frac{25-1}{5-1}=6$, while the remaining graphs $\G(4,25)$ and $\G(12,25)$ have real non-integral spectrum ($k\mid 12$ but $k\nmid 6$.) 
		
		This is in accordance with Corollaries \ref{coro: wkqs exp} and \ref{coro: integral GPs} giving $N_\C(5^2)=2$, $N_\R(5^2)=2 \cdot 3=6$ and $N_\Z(5^2) = 2 \cdot 2 =4$. 
		
		\sk 
		
		\noindent \textit{Waring numbers.}
		From the disconnected graphs we see that the Waring numbers $g(6,25)$, $g(12,25)$ and $g(24,25)$ and the weak Waring numbers $w(6,25)$, $w(12,25)$ and $w(24,25)$ do not exist. 
		From the connected graphs $\G(k,25)$ we know that the corresponding Waring numbers exist and also 
		$g(1,25)=1$ and $g(2,25)=g(3,25)=2$, 
		where we have used \eqref{eq: diams}
		(These values can also be obtained for instance from the expressions in Corollary 3.5 in \cite{PV7}). 
		From List 4 ($c$) in Section 7 of \cite{PV7} 
		\footnote{We point out here that there are some errors in item ($c$) of List 4 in Section 7 of \cite{PV7}. Namely $g(6,25)$ and $g(10,81)$ do not exist since $\G(6,25)$ and $\G(10,81)$ are not connected; while $g(12,81)=g(4,81)=2$ since $\gcd(12,80)=\gcd(4,80)=4$ and $\G(4,81)$ is the Brouwer-Haemers graph which, being strongly regular, has diameter 2.} 
		we see that 
		$g(8,25)=4$.
		
		On the other hand, the number $g(4,25)$ is slippery, since no known result (to the authors' knowledge) give us the explicit value. Either upper bounds for $g(k,p^m)$ in ($a$) or ($b$) of Subsection 2.2 in \cite{PV6}, which are results of Winterhoff from 1998, give $g(4,25) \le 4$. Hence, we compute this number by hand. Notice that 
		$$\mathbb{F}_{25}\cong \mathbb{F}_{5}[x]/(x^{2}+2x+3)\cong\mathbb{F}(\alpha)=\{c\alpha+ d: c,d\in \mathbb{F}_{5}, \alpha^{2}=3\alpha+2\}.$$ 
		One can prove that $\alpha$ is a primitive element of $\mathbb{F}_{25}$ and the set of non-zero $4$th powers is
		$$\{1, 4, \alpha +3, \alpha +4, 4\alpha+1, 4\alpha+2 \}.$$ 
		More precisely,
		$\alpha^{4}=4\alpha+2$, $\alpha^{8}=4\alpha+1$, $\alpha^{12}=4$, $\alpha^{16}=\alpha +3$ and $\alpha^{20}= \alpha+2$. 
		It is straightforward to show that all of the elements of $\mathbb{F}_{25}$ can be written as a sum of tree $4$th powers. Moreover, for instance, the element $\alpha+1$ cannot be written as a sum of two $4$th powers, which implies that 
		$g(4,25)=3$.

		Finally, from Theorem \ref{thm: wkq} we have that $g(k,25)=w(k,25)$ for $1\le k \le 3$ and $g(4,25)=w(4,25)=w(8,25)$.
		Summing up, we have
		\begin{align*}
			&g(1,25)= w(1,25)=1, \\[1mm]
			&g(2,25)= w(2,25)=g(3,25)= w(3,25)=2, \\[1mm]
			&g(4,25)= w(4,25)= w(8,25)=3, \\[1mm]
			&g(8,25)= 4. 
		\end{align*}
		
		So, $w(8, 25) < g(8,25)$ in this case.
		To illustrate this, in the same notation as before, notice that the set of $8$th powers are given by 
		$\{1, \alpha +3, 4\alpha+1\}$.
		If we take, $\beta= 3\alpha+1$, then $\beta$ can be written as a signed sum of three 8th powers in the form  
		$$ \beta = \alpha^{8}+\alpha^{8}-(\alpha^3)^8,$$ 
		but cannot be written with less 8th powers. On the other hand, $\beta$ can be written as a sum of four 8th powers as follows 
		$$ \beta = \alpha^{8}+\alpha^{8}+\alpha^{8}+(\alpha^2)^8, $$ 
		but cannot be written with less 8th powers.
	\end{exam}

	\begin{exam} \label{exam: q=49}
		By Corollaries \ref{coro: wkqs exp} and \ref{coro: integral GPs}, there are ten GP-graphs over $\ff_{7^2}$, out of which two have complex spectrum and four have integral spectrum. 
		From Example 3.7 in \cite{PV18}, these GP-graphs over $\ff_{7^2}$ are given by 
		\begin{align*}
			\G(1,49)  & = K_{49}, \\ 
			\G(2,49)  & = \mathcal{P}_{49}, \\ 
			\G(3,49)  & = \text{undirected, connected, $16$-regular (not srg)}, \\ 
			\G(4,49)  & = srg(49,12,5,2) = L_{7,7}, \\ 
			\G(6,49)  & = \text{undirected, connected, $8$-regular}, \\
			\G(8,49)  & = K_7 \sqcup K_7 \sqcup K_7 \sqcup K_7 \sqcup K_7 \sqcup K_7 \sqcup K_7, \\
			\G(12,49) & = \text{undirected, connected, $4$-regular},  \\ 
			\G(16,49) & = \vec{\mathcal{P}}_7 \sqcup \vec{\mathcal{P}}_7 \sqcup \vec{\mathcal{P}}_7 \sqcup \vec{\mathcal{P}}_7 \sqcup \vec{\mathcal{P}}_7 \sqcup \vec{\mathcal{P}}_7 \sqcup \vec{\mathcal{P}}_7, \\ 
			\G(24,49) & = C_7 \sqcup C_7 \sqcup C_7 \sqcup C_7 \sqcup C_7 \sqcup C_7 \sqcup C_7, \\
			\G(48,49) & = \vec{C}_7 \sqcup \vec{C}_7 \sqcup \vec{C}_7 \sqcup \vec{C}_7 \sqcup \vec{C}_7 \sqcup \vec{C}_7 \sqcup \vec{C}_7.
		\end{align*}
		
		\noindent \textit{Nature of the spectrum.}  
		The graphs $\G(16,49) = \G(2^4,25)$ and $\G(48,49) = \G(2^4\cdot 3,49)$ have complex spectrum since they are directed. Also, note that 
		$$ q-1=48=2^4 \cdot 3$$ 
		and use Theorem \ref{thm: nature spec v2} or Corollary \ref{coro: wkqs exp}.
		Moreover, we see that $\G(1,25)$, $\G(2,25)$, $\G(4,25)$ and $\G(8,25)$ have integral spectrum since 
		$k\mid \frac{49-1}{7-1}=8$ and the remaining graphs have real non-integral spectrum ($k\mid 12$ but $k\nmid 6$.) 
		
		\sk
		
		\noindent \textit{Waring numbers.}	
		First, from the disconnected graphs in the list, we see that the Waring numbers $g(k,49)$ and weak Waring numbers $w(k,49)$ for $k=8,16,24,48,$ do not exist. 
		
		On the other hand, by Theorem \ref{thm: wkq}, we have that $w(k,49)=g(k,49)$ for $k=1,2,3,4,6$. By using the diameter of the graphs, it is immediate that
		$$ w(1,49)=g(1,49)=1 \quad \text{and} \quad  
		w(2,49)=g(2,49)=w(4,49)=g(4,49)=2.$$
		
		Now, Corollary 3.12 in \cite{PV7} ensures that 
		$$ g(\tfrac{p^2-1}4, p^2)=p-1$$
		for every odd prime $p\equiv 3 \pmod 4$,m and hence we have $$ w(12,49)=g(12,49)=6.$$ 
		
		For the remaining values of $k$, i.e.\@ $k=3,6$, the results in \cite{PV6} and \cite{PV7} seem not to apply, and hence we compute them by hand. 
		
		Since $\mathbb{F}_{49}$ is isomorphic to 
		$$\{a+b\alpha: a,b \in \mathbb{Z}_{7}, \, \alpha^{2}=-1\}.$$ 
		By direct computation, we get the set of cubes in $\ff_{49}$ which is
		\begin{multline*}
			\{0,1, \,-1,\,\alpha, \,-\alpha, \,2+2\alpha, \, 2-2\alpha, \, 2+4\alpha, \, 2-4\alpha,\, -2+2\alpha,\, -2-2\alpha, \\
			-2+4\alpha, \,-2-4\alpha, \, 4+2\alpha, \, 4-2\alpha, \, -4+2\alpha, \, -4-2\alpha\}.
		\end{multline*}
		A straightforward computation with the above set allow one to obtain that 
		$$w(3,49)=g(3,49)=2,$$ 
		that is any element of $\ff_{49}$ is a sum of two cubes. 
		Similarly, the set of $6$-th powers is
		$$
		\{0,1, \,-1,\,\alpha, \,-\alpha, \,2+2\alpha, \, 2-2\alpha, -2+2\alpha,\, -2-2\alpha\},
		$$
		in this case, by direct computation we can obtain that any element of $\ff_{49}$ is a sum of three $6$-th powers and hence $w(6,49)=g(6,49)=3$.
	\end{exam}

	\begin{exam} \label{exam: q=81}
		There are ten GP-graphs over $\ff_{3^4}$. From Example 3.5 in \cite{PV18}, they are given by 
		\begin{align*}
			\G(1,81)  & = K_{81}, \\ 
			\G(2,81)  & = \mathcal{P}_{81}, \\ 
			\G(4,81)  & = srg(81,20,1,6) = \text{Brouwer-Haemers graph}, \\ 
			\G(5,81)  & = srg(81,16,7,2) = L_{9,9}, \\ 
			\G(8,81)  & = \text{undirected, connected, $10$-regular (not srg)}, \\ 
			\G(10,81) & = 9K_9, \\
			\G(16,81) & = \text{directed, connected, $5$-regular (not srg)}, \\ 
			\G(20,81) & = 9 \mathcal{P}_9, \\
			\G(40,81) & = 27 C_3 = 27 K_3, \\ 
			\G(80,81) & = 27 \vec{C}_3 = 27 \vec{\mathcal{P}}_3. 
		\end{align*}

		\noindent \textit{Nature of the spectrum.}  
		The graphs $\G(16,81)$ and $\G(80,81)$ have complex spectrum since they are directed (or note that $q-1=80=2^5 \cdot 5$ and use Theorem \ref{thm: nature spec v2} or Corollary~\ref{coro: wkqs exp}).
		The remaining GP-graphs are integral, since $\frac{q-1}{p-1}=\frac{3^4-1}{2}=40= 2^3 \cdot 5$
		and, hence, by Corollary \ref{coro: integral GPs} we have that $N_\Z(3^4) = 4\cdot 2=8$. 
		
		\sk
		
		\noindent \textit{Waring numbers.}
		First, from the disconnected graphs in the list, we see that the Waring numbers $g(k,81)$ and weak Waring numbers $w(k,81)$ for $k=10,20,40,80$ do not exist. 
		
		On the other hand, by Theorem \ref{thm: wkq}, we have that $w(k,81)=g(k,81)$ for $k=1,2,4,5,8$ and $w(16,81)=g(8,81)$
		By using the diameter of the graphs, it is immediate that
		\begin{align*}
			& w(1,81)=g(1,81)=1, \\  
			& w(2,81)=g(2,81)=w(4,81)=g(4,81)=w(5,81)=g(5,81)=2.	
		\end{align*}
		
		Thus, it only remains to compute $g(8,81)$ and $g(16,81)$. 
		To this end, we can use the expressions due to Winterhof and van de Woestijne (\cite{Win2}), asserting that
		$$ g(\tfrac{p^{r-1}-1}{r},p^{r-1}) = \tfrac 12 (p-1)(r-1) $$
		where $p,r$ are primes, $p$ is a primitive root modulo $r$ and $\varphi$ denotes the Euler totient function. 
		In addition, if $p,r$ are odd, we have 
		$$ g(\tfrac{p^{r-1}-1}{2r},p^{r-1}) = \lfloor \tfrac{pr}4 - \tfrac{r}{4p}  \rfloor $$
		where $r\ge p$.

		Taking $p=3$ and $r=5$, the first expression gives 
		$$ g(16,81) = g(\tfrac{3^4-1}{5},3^4) = \tfrac{1}{2}(3-1)(5-1) = 4, $$
		while the second expression gives 
		$ g(8,81) = \lfloor \tfrac{15}4 - \tfrac{5}{12} \rfloor = \lfloor \tfrac{10}3 \rfloor = 3$, 
		and hence
		$$ g(8,81) = w(8,81) = w(16,81) = 3, $$
		completing the computation of the (weak) Waring numbers in this case. Here, we have $w(16,81)<g(16,81)$.
	\end{exam}

	\begin{exam} \label{exam: q=256}
		There are eight GP-graphs over $\ff_{2^8}$ given by the divisors of $2^8-1$.
		From Example 4.5 in \cite{PV18}, they are given by
		\begin{align*}
			\G(1,256)  & = K_{256}, \\ 
			\G(3,256)  & = \text{connected $85$-regular} = srg(256, 85, 24, 30), \\ 
			\G(5,256)  & = \text{connected $51$-regular} = srg(256, 51, 2, 12), \\ 
			\G(15,256) & = \text{connected $17$-regular (not srg)}, \\ 
			\G(17,256) & = K_{16} \sqcup \cdots \sqcup K_{16} \quad (\text{$2^4$-times}), \\ 
			\G(51,256) & = \G(3,16) \sqcup \cdots \sqcup \G(3,16) \quad (\text{ $2^4$-times}). \\
			\G(85,256) & = K_4 \sqcup \cdots \sqcup K_4 \quad (\text{$2^6$-times}), \\ 
			\G(255,256)& = K_2 \sqcup \cdots \sqcup K_2 \quad (\text{$2^7$-times}). 
		\end{align*}

		\noindent \textit{Nature of the spectrum.}  
		By \eqref{eq: 2^m integral} in Remark \ref{rem: nature}, all these binary GP-graphs are integral. 
		
		\sk 
		
		\noindent \textit{Waring numbers.}
		The (weak) Waring numbers do not exist for $k=17, 51, 85, 255,$ since the associated graphs $\G(k,q)$ and $W(k,q)$ are disconnected in these cases. 
		For the remaining values of $k$, by Theorem \ref{thm: wkq} or $(a)$ of Corollary \ref{coro: wkg=gkq} we have that 
		$$ w(k,2^8)=g(k,2^8) \qquad \text{for $k=1, 3, 5, 15$.}$$ 
		Also, by using the diameter of the graphs, we have that 
		$$ g(1,256)=1 \qquad \text{and} \qquad g(3,256)=g(5,256)=2.$$
		It remains to compute $g(15,256)$.
		First note that, by a result of Glibichuk and Rudnev (\cite{GlR}), $g(k,q)\le 8$ if $k< \sqrt{q}$ , and hence we have that $g(15,256)\le 8$.
		Now, since the polynomial $p(x)=x^{8}+x^{4}+x^{3}+x+1$ is irreducible over $\mathbb{F}_{2}$, we have that 
		$$ \mathbb{F}_{256} \simeq \mathbb{F}_{2}[x]/(p(x)) = \{a_{0}+a_1 \alpha + \cdots +a_{7}\alpha^{7}: a_{i}\in \ff_2, \, \alpha^{8}=\alpha^{4}+\alpha^{3}+\alpha+1\}. $$ 
		where $\alpha$ is a root of $p(x)$. 
		By direct computation, if $\omega=\alpha^{15}$ then $\omega=\alpha^5+\alpha^3+\alpha^2+\alpha+1$ and $\omega^{7}=\alpha^{3}$. These equalities and the property $(a+b)^{2}=a^{2}+b^{2}$ allow us to obtain the set of $15$-th non-zero powers, which are 
		$$
		\omega^{0}=1, \, \omega^{1}= \alpha^5+\alpha^3+\alpha^2+\alpha+1, \, \omega^{2}= \alpha^5+\alpha^4+\alpha^3+1, \, \omega^{3}= \alpha^4+\alpha^3+\alpha^2+1
		$$
		$$
		\omega^{4}= \alpha^5+\alpha^4+\alpha^2+\alpha, \,\omega^{5}= \alpha^7+\alpha^5+\alpha^4+\alpha+1, \, \omega^{6}= \alpha^6+\alpha^3+\alpha,\, \omega^{7}= \alpha^3, 
		$$
		$$
		\omega^{8}= \alpha^6+\alpha^5+\alpha+1, \, \omega^{9}= \alpha^7+\alpha^6+\alpha^4+\alpha+1, \, \omega^{10}= \alpha^7+\alpha^6+\alpha^5+\alpha^3,
		$$
		$$
		\omega^{11}= \alpha^7+\alpha^5+\alpha^3+\alpha+1, \, \omega^{12}= \alpha^7+\alpha^6+\alpha^5+\alpha^3+\alpha^2+\alpha+1, \, \omega^{13}= \alpha^5+\alpha^4+\alpha^3+\alpha^2, 
		$$
		$$
		\omega^{14}= \alpha^6, \, \omega^{15}= \alpha^5+\alpha^4+\alpha^2+1, \, \omega^{16}= \alpha^7+\alpha^6+\alpha.
		$$
	\end{exam}
	
	It is straightforward to see that $\alpha^{2}$ cannot be written as a sum of two $15$-th powers and so $g(15,256)\ge 3$. 
	Thus, have that 
	$$3 \le g(15,256) \le 8.$$
	However, all known results for exact values cannot be applied or fail to give an answer, and no known bounds seem to improve the previous one.  
	Nevertheless, by using Python, one can check that 
	$$ g(15,256)=3.$$
	This shows that, in general, it is hard to compute exact values of Waring numbers without using some mathematical software.

\end{document}